\documentclass[11pt]{amsart}

\theoremstyle{plain}
\newtheorem{theorem}{Theorem}[section]
\newtheorem{lemma}[theorem]{Lemma}
\newtheorem{proposition}[theorem]{Proposition} 
\newtheorem{corollary}[theorem]{Corollary}

\theoremstyle{remark}
\newtheorem{remark}[theorem]{Remark}

\theoremstyle{definition}
\newtheorem{definition}[theorem]{Definition} 
\newtheorem{notation}[theorem]{Notation} 

\newtheorem{example}[theorem]{Example}

\newcommand{\cD}{\mathcal{D}}

\newcommand{\cO}{\mathcal{O}}

\newcommand{\cE}{\mathcal{E}}

\newcommand{\C}{\mathbf{C}}

\newcommand{\Q}{\mathbf{Q}}
\newcommand{\Z}{\mathbf{Z}}
\newcommand{\Ps}{\mathbf{P}}
\newcommand{\bw}{\mathbf{w}}

\DeclareMathOperator{\sing}{sing}
\DeclareMathOperator{\rk}{rank}

\DeclareMathOperator{\rank}{rank}

\DeclareMathOperator{\codim}{codim}
\DeclareMathOperator{\prim}{prim}
\DeclareMathOperator{\Pic}{Pic}
\DeclareMathOperator{\NL}{NL}

\numberwithin{equation}{section}

\title[Complete intersection threefold with defect]{Nodal complete intersection threefold with defect}

\author[R.~Kloosterman]{Remke Kloosterman}
\address{Institut f\"ur Mathematik, Humboldt-Universit\"at zu Berlin,
Unter den Linden 6, D-10099 Berlin, Germany} 
\email{klooster@math.hu-berlin.de}
\date{\today}
\thanks{The author thanks Ivan Cheltsov, Slawomir Cynk, Vincenzo Di Gennaro, Brendan Hassett and Orsola Tommasi for several comments on a previous version of this paper. The author would like to thank the referee for several suggestions to improve the presentation.
The  author is partially supported by DFG-grant KL 2244/2-1.}
\subjclass{32S20, 14J30, 14J70, 14M10}
\keywords{Nodal varieties with defect, Noether-Lefschetz theory}
\begin{document}

\begin{abstract}
In this paper we show that a nodal complete intersection threefold $X$ in $\Ps^{3+c}$ with defect, but without induced defect, has at least $\sum_{i\leq j} (d_i-1)(d_j-1)$ nodes, provided either $c=2$ or $d_c>\sum_{i=1}^{c-1} d_i$ holds.
\end{abstract}

\maketitle

\section{Introduction}
 Let  $c$ be a positive integer.  For a nodal complete intersection  $X\subset \Ps^{3+c}$ of multidegree $d_1,\dots,d_c$ we define the defect of $X$ to be $h^{4}(X)-h^{2}(X)$. The defect is also the difference of the rank of the group of Weil divisors and the rank of Cartier divisors.
In this paper we aim to determine the minimal number of nodes such that $X$ has defect.

This minimal number has been determined by Cheltsov \cite{ChelPlane} in the case $c=1$. However, for the case $c>2$ only partial results are known. 
 Namely in the case $c=2$  Kosta \cite{Kosta} showed that if $X$ has defect, $d_1\leq d_2$ and the complete intersection is nondegenerate (see Section~\ref{secDeg}) then it has at least $(d_1+d_2-2)^2-(d_1+d_2-2)(d_1-1)$ nodes.

Cynk and Rams \cite{CynkRamsNonFac} considered complete intersection threefolds that are CR-nondegenerate in codimension 3 (see Section~\ref{secDeg}). 
They showed that  a nodal CR-nondegenerate complete intersection threefold with defect, such that the defect is caused by a smooth complete intersection surface, has at least $ \sum_{1\leq i\leq j\leq c} (d_i-1)(d_j-1)$ nodes, except if $c\leq 4$ and $d_1=\dots=d_c=2$ holds. In the latter  case the weaker bound  $\# X_{\sing}\geq 2^{c-1}$ holds. Moreover, they show that this bound is sharp, i.e., for each choice of $d_1,\dots,d_c$ they give an example of a complete intersection with either $ \sum_{1\leq i\leq j\leq c} (d_i-1)(d_j-1)$ or $2^{c-1}$ nodes and they conjecture that one can drop the two conditions on the surface causing the defect (smoothness and being a complete intersection).

In this paper we determine the minimal number of nodes for a nodal complete intersection to have defect, where we put a slightly different non-degeneracy condition than Cynk and Rams, and we assume that either $c=2$ or $d_1+\dots d_{c-1}<d_c$ holds.
Part of our strategy has been used in \cite{KloNod} to reprove Cheltsov's result for the case  $c=1$. In \cite{KloNod} we used results from commutative algebra also used in the proof of the explicit Noether--Lefschetz theorem by Green \cite{GreenF}.
It turns out that we can extend these techniques to the compete intersection case, but the arguments become much more technically involved and less elegant than in the hypersurface case. One of the difference is the fact, that there is a classical formula to calculate the defect of a hypersurface in terms of the defect of a linear system, but we were not able to find such a formula for the complete intersection case in the literature. (E.g., Kosta used a bound a for the defect, rather than a formula. See also Proposition~\ref{prpDefOld}.)
We prove a formula to compute the defect of a nodal complete intersection in terms of the defect of a certain linear system, see Proposition~\ref{prpDefCI}.

We use a different notions of nondegnerate complete intersections than Kosta and than Cynk--Rams.
We say that a complete intersection $X\subset \Ps^{3+c}$ of multidegree $(d_1,\dots,d_c)$, with $d_1\leq \dots\leq d_c$ has induced defect if there exists a four-dimensional complete intersection $Y\subset \Ps^{3+c}$ and a hypersurface $H\subset \Ps^{3+c}$ of degree $d_c$, such that $X=Y\cap H$ and such that for a general hyperplane $H'$ we have that $h^4(Y \cap H')>h^2(Y \cap H')$. If this is the case then the singular locus of $Y$ is one-dimensional. 
If $d_{c-1}<d_c$ holds then induced defect implies that $X$ is degenerate in codimension three in the sense of \cite{CynkRamsNonFac}.

Our main theorem is the following:
\begin{theorem} Let $X$ be a nodal complete intersection threefold in $\Ps^{3+c}$ of multidegree $(d_1,\dots,d_c)$ with defect, without induced defect. Suppose that either $c=2$ or $\sum_{i=1}^{c-1} d_i<d_c$ holds. Then $X$ has at least
\[ \sum_{i\leq j}(d_i-1)(d_j-1) \]
nodes.
\end{theorem}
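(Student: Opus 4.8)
The plan is to reduce the statement to a Noether–Lefschetz type estimate for a linear system, using the defect formula promised as Proposition~\ref{prpDefCI}. First I would set up the following dictionary: given the nodal complete intersection $X=V(f_1,\dots,f_c)\subset\Ps^{3+c}$ with node set $Z=X_{\sing}$, the defect $h^4(X)-h^2(X)$ should be expressible as the failure of $Z$ to impose independent conditions on a suitable linear system $\cL$ of forms — in the hypersurface case $c=1$ this is the system of degree $2d-4-(n+1)$ forms (with $n=\dim\Ps$), and for complete intersections one expects $\cL$ to be the degree-$\sigma$ piece, $\sigma=\sum d_i-3-c$ adjusted by the last degree, of the homogeneous ideal-theoretic setup coming from the Jacobian/Koszul complex of $(f_1,\dots,f_c)$. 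Concretely, defect $>0$ forces the existence of a nonzero form vanishing on $Z$ that is \emph{not} in the span of the obvious "trivial" relations; the absence of induced defect is exactly what guarantees that no such form arises from a lower-dimensional ambient complete intersection $Y$ with one-dimensional singular locus.

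Next I would run the core commutative-algebra argument, following the method of \cite{KloNod} and Green's explicit Noether–Lefschetz theorem \cite{GreenF}. The key point is a Cayley–Bacharach / spreading-out estimate: if a zero-dimensional scheme $Z$ fails to impose independent conditions on the complete linear system $|\cO(t)|$ for $t$ below a certain threshold, then either $Z$ has very small length, or $Z$ is forced to lie on a low-degree complete intersection — and one then bounds $\#Z$ from below in terms of the degrees $d_i$. The hypothesis $c=2$ or $\sum_{i=1}^{c-1}d_i<d_c$ is what makes the numerology of the relevant Castelnuovo–Mumford regularity / Koszul cohomology vanishing work: it forces the "defect-causing" form to have degree in the range where the Macaulay-type growth estimates apply, and it rules out the degenerate splittings. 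I would isolate the needed vanishing as a lemma about the Koszul complex of a regular sequence of the given multidegree, then feed it into the defect formula to conclude that $\length(Z)\ge\sum_{i\le j}(d_i-1)(d_j-1)$, which for a reduced nodal scheme is exactly the count of nodes.

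The main obstacle I anticipate is precisely the passage from the hypersurface case to the complete intersection case in the defect formula itself — i.e.\ establishing and then exploiting Proposition~\ref{prpDefCI}. In the $c=1$ case there is a clean classical identity (defect of $X$ equals defect of a linear system of forms vanishing at the nodes), but for $c\ge 2$ one has to track several graded pieces of Koszul cohomology simultaneously, and the "trivial" part of the linear system (the relations automatically satisfied by $Z$ because $Z\subset X$) is no longer a single Jacobian ideal but a more complicated module. Separating genuine defect from this trivial part, and showing that genuine defect plus "no induced defect" forces the length lower bound, is where the argument becomes, as the introduction warns, "much more technically involved and less elegant." A secondary difficulty is handling the boundary/degenerate multidegrees (the analogue of the $d_1=\dots=d_c=2$ exceptions in \cite{CynkRamsNonFac}) — I would expect these to require either the extra hypothesis $\sum_{i<c}d_i<d_c$ to exclude them, or a separate direct check; organizing the case analysis so that the hypothesis does all the excluding work is the cleanest route.
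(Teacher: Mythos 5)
Your outline identifies the right general shape (a defect formula reducing the problem to a failure of the nodes to impose independent conditions, followed by Macaulay/Gotzmann-type growth estimates on Hilbert functions), but it is missing the central technical idea, and the specific route you propose is the one the paper explicitly says does not work. You suggest working directly with "the Jacobian/Koszul complex of $(f_1,\dots,f_c)$" on $\Ps^{3+c}$ and a single linear system of degree roughly $\sum d_i-3-c$. The difficulty is that the smallest Gorenstein ideal containing the Jacobian-type ideal of the nodes in this setting is too large to yield the quadratic lower bound $\sum_{i\le j}(d_i-1)(d_j-1)$; this is exactly the obstruction the paper flags. The actual argument replaces $X$ by the hypersurface $Y=V(\sum y_if_i)$ in the $\Ps^{c-1}$-bundle $\Ps(\cE)$ (the Cayley trick), proves the defect formula there (Proposition~\ref{prpDefCI}, a \emph{bigraded} system, i.e.\ a direct sum $\oplus_i S_{*+d_i}$ of pieces in different degrees rather than one linear system), and then works with the module $W$ of $c$-tuples $(h_1,\dots,h_c)$ vanishing at the nodes of $Y$, restricted to a general hyperplane and filtered so that the last graded piece $F^cV$ is Artinian Gorenstein. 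Without this structure the Gorenstein duality and the Macaulay estimates you invoke have nothing to bite on.

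The second missing ingredient is the identification of the extremal configuration, without which a sharp bound cannot be proved. The paper's case analysis hinges on whether a general hyperplane section $X_H$ contains a line: if it does, $V$ is pinned down to essentially the ideal of the line (via Noether--Lefschetz theory and Otwinowska's infinitesimal method) and one computes $\sum_k h_V(k)=\sum_{i\le j}(d_i-1)(d_j-1)$ exactly; if it does not, one shows the Hilbert function of $F^cV$ must be strictly larger in some degree, using that the elements of $W$ are tangent vectors to equisingular deformations (which preserve defect) and hence lie in the tangent space to a component of the Noether--Lefschetz locus, together with the results of di Gennaro--Franco to rule out degenerate behaviour. Your "either $Z$ has very small length or $Z$ lies on a low-degree complete intersection" dichotomy does not capture this, and your appeal to the hypothesis $c=2$ or $\sum_{i<c}d_i<d_c$ as "making the numerology work" is too vague: in the paper this hypothesis enters as the inequality $D<2d_c$ used repeatedly in the Macaulay estimates, while the case $c=2$, $d_1=d_2$ (where $D=2d_c$) requires a separate chain of lemmas. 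As written, the proposal does not produce the bound.
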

(This is a combination of Theorem~\ref{thmCI} and Theorem~\ref{thmCIeq}.)

We will now briefly discuss the strategy of proof. As in \cite{KloNod} we study the Hilbert function of the ideal $I$ of the nodes of $X$, by studying the larger ideal $(I,\ell)$, where $\ell$ is a general linear form.
The main difference with the hypersurface case is that the smallest ideal $I'$ containing $I_H$ and such that $S/I'$ is  Gorenstein, is too big to obtain the desired lower bound for the nodes. Instead of  working directly with the complete intersection $X$ in $\Ps^{3+c}$ we work with an  associated hypersurface $Y$ in a  $\Ps^{c-1}$-bundle over $\Ps^{3+c}$. If $X$ is nodal then so is $Y$ and the nodes of $Y$ are in  one-to-one correspondence with those of $X$.  We analyze the ideal of the nodes of $Y$. The advantage of working with $Y$ is that one can rephrase the various nondegeneracy properties easily in terms of the ideal of the nodes of $Y$.
 
 The paper is organized as follows. In Section~\ref{sectMac} we recall several standard results on the Hilbert functions of ideals. In Section~\ref{secNodCI} we recall some standard results on the cohomology of nodal complete intersections. 
 In Section~\ref{secCayley} we discuss the above mentioned construction of $Y$ and give a formula to calculate the defect of a nodal complete intersection. In Section~\ref{secDeg} we discuss various notions of nondegenerate complete intersections and compare them. Finally in Section~\ref{secCIthm} we prove our main result.
 
\section{Macaulay's and Green's result}\label{sectMac}
We recall some results from commutative algebra. These results are also mentioned in \cite{KloNod} and we included them here for the reader's convenience.

Let $S=\C[x_0,\dots,x_n]$ and let $I\subset S$ be a homogeneous ideal. Let $h_I$ be the Hilbert function of $I$, i.e., $h_I(k)=\dim (S/I)_k$.

Let $d\geq 1$ be an integer.
Let $c:=h_I(d)$. We can write $c$ uniquely as
\[c= \sum_{i=1}^d \binom{i+\epsilon_i}{i}\]
with $\epsilon_d\geq \epsilon_{d-1}\geq \dots \geq \epsilon_1\geq -1$. We call this the \emph{(Macaulay) expansion} of $c$ in base $d$.
This expansion can be obtained inductively as follows: The number $\epsilon_d$ is the largest integer such that $\binom{d+\epsilon_d}{d}\leq c$. The numbers $\epsilon_i$ for $i<d$ are the coefficients in the expansion of $c-\binom{d+\epsilon_d}{d}$ in base $d-1$.

Using the Macaulay expansion of $c$ we define the following  numbers:
 \[c^{\langle d \rangle}:= \sum_{i=1}^d\binom{i+\epsilon_i+1}{i+1},\;  c_{\langle d \rangle}:=\sum_{i=1}^d \binom{i+\epsilon_i-1}{i} ,\; c_{*d}:=\sum_{i=2}^d \binom{i+\epsilon_i-1}{i-1}. \]
Note that  $c\mapsto c_{*d}$, $c\mapsto c^{\langle d \rangle}$ and  $c\mapsto c_{\langle d \rangle}$ are increasing functions in $c$.

Recall the following theorem by Macaulay:
\begin{theorem}[{Macaulay \cite{Mac}}]\label{thmMac} Let $V\subset S_d$ be a linear system and $c=\codim V$. Then the codimension of $V\otimes_{\C} S_1$ in $S_{d+1}$ is at most $c^{\langle d\rangle}$.
\end{theorem}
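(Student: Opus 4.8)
The plan is to prove Macaulay's theorem on the growth of codimension of a linear system under multiplication by $S_1$. This is a classical result, and the cleanest route is via the theory of lex-segment ideals, so I would first recall the combinatorial reformulation.

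First I would restate the claim in terms of ideals rather than linear systems. Writing $W = V \cdot S_1 \subset S_{d+1}$, we have $\codim_{S_{d+1}} W = h$ where $h$ is what we must bound; equivalently, if $J$ is the ideal generated by $V$ (or by any ideal with $J_d = V$), then $\dim (S/J)_d = c$ and we must show $\dim(S/J)_{d+1} \le c^{\langle d\rangle}$. The reduction step is \emph{Macaulay's lemma on lex-segment ideals}: among all graded ideals $J$ with a fixed value of $\dim(S/J)_d$, the lexicographic ideal $L$ (generated in degrees $\le d$ by an initial segment of monomials in the lex order) \emph{maximizes} $\dim(S/J)_{d+1}$. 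This is usually proved by a Gr\"obner-degeneration / generic-initial-ideal argument: passing to $\mathrm{gin}(J)$ in the reverse-lex order does not change any Hilbert function value, and then a compression/shifting argument shows the lex ideal is at least as large in degree $d+1$. I would cite this reduction (it is in \cite{Mac}, and also in standard references on the Hilbert function) rather than reprove it.

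The second step is the explicit computation for the lex-segment case, which is pure combinatorics with binomial coefficients. If $\dim(S/L)_d = c = \sum_{i=1}^d \binom{i+\epsilon_i}{i}$ is the Macaulay expansion, then the complementary monomials of degree $d$ in $S/L$ form a lex-initial segment, and one checks by direct counting that the monomials of degree $d+1$ they generate (under multiplication by $x_0,\dots,x_n$) number exactly $\sum_{i=1}^d \binom{i+\epsilon_i+1}{i+1} = c^{\langle d\rangle}$; this is the standard "adding one variable shifts each binomial $\binom{i+\epsilon_i}{i} \mapsto \binom{i+\epsilon_i+1}{i+1}$" identity, proved by induction on $n$ and on $d$ using Pascal's rule. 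Combining the two steps: $\dim(S/J)_{d+1} \le \dim(S/L)_{d+1} = c^{\langle d\rangle}$, which is exactly the assertion that $\codim W \le c^{\langle d\rangle}$.

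The main obstacle is the first step — the reduction to lex ideals — since the honest proof that lex-segment ideals maximize Hilbert function growth is genuinely nontrivial (it goes through generic initial ideals, Borel-fixed ideals, and a combinatorial compression argument of Macaulay, or alternatively Gotzmann-type arguments). In a paper of this kind one simply invokes it as \cite{Mac}; I would do the same, and spend the written proof only on the bookkeeping of the binomial identity, which is elementary. The combinatorial step, while routine, requires care in tracking the inequalities $\epsilon_d \ge \cdots \ge \epsilon_1 \ge -1$ so that the expansion of $c$ in base $d$ and the induced expansion of $c^{\langle d\rangle}$ in base $d+1$ are both valid Macaulay expansions.
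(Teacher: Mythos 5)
Your outline is the standard classical argument (reduction to lex-segment ideals via Gr\"obner degeneration and compression, followed by the binomial bookkeeping), and it is correct in substance; note that the paper itself offers no proof of this statement at all, simply recalling it with the citation to Macaulay, so there is no internal argument to compare against. Since the genuinely hard step in your sketch (that lex ideals maximize Hilbert function growth) is itself deferred to the same citation, your proposal is in effect consistent with the paper's treatment, just with the elementary combinatorial layer made explicit.
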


We apply this result mostly in the case where $V$ is the degree-$d$ part of an ideal $I$. In this case we can also obtain information on $h_I(d-1)$. 
\begin{corollary}\label{corMacDown}
Let $I\subset S$ be an ideal, $d\geq 2$ an integer and $c:=h_I(d)$. Then  
\[ h_I(d-1)\geq c_{* d} .\]
Moreover, if $\epsilon_{1}$ is nonnegative then $h_I(d-1)> c_{* d} $ holds.
\end{corollary}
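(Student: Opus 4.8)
The plan is to derive Corollary~\ref{corMacDown} from Macaulay's theorem (Theorem~\ref{thmMac}) applied one degree down, combined with the inclusion $I_{d-1}\otimes_{\C}S_1\subseteq I_d$. Write $c=h_I(d)=\codim I_d$ and let $c'=h_I(d-1)=\codim I_{d-1}$ in $S_{d-1}$. Since $I_{d-1}\cdot S_1\subseteq I_d$, the codimension of $I_{d-1}\otimes_{\C}S_1$ inside $S_d$ is at least $c$. On the other hand, Theorem~\ref{thmMac} (with $d$ replaced by $d-1$) bounds that codimension from above by $(c')^{\langle d-1\rangle}$. Hence $c\leq (c')^{\langle d-1\rangle}$. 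The heart of the argument is then the combinatorial fact that the map $c'\mapsto (c')^{\langle d-1\rangle}$ is inverted, in the appropriate monotone sense, by $c\mapsto c_{*d}$: concretely, if $c'=\sum_{i=1}^{d-1}\binom{i+\delta_i}{i}$ is the Macaulay expansion of $c'$ in base $d-1$, then $(c')^{\langle d-1\rangle}=\sum_{i=1}^{d-1}\binom{i+\delta_i+1}{i+1}=\sum_{j=2}^{d}\binom{j+\delta_{j-1}}{j}$, which exhibits the Macaulay expansion of $(c')^{\langle d-1\rangle}$ in base $d$ as having coefficients $\epsilon_j=\delta_{j-1}$ for $j\geq 2$ and $\epsilon_1=-1$.

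Next I would combine this with the monotonicity of $c\mapsto c_{*d}$ (recorded in the excerpt just after the definitions). Applying $c_{*d}$ to both sides of $c\leq (c')^{\langle d-1\rangle}$ gives $c_{*d}\leq ((c')^{\langle d-1\rangle})_{*d}$. Using the expansion just computed, $((c')^{\langle d-1\rangle})_{*d}=\sum_{j=2}^{d}\binom{j+\epsilon_j-1}{j-1}=\sum_{j=2}^{d}\binom{(j-1)+\delta_{j-1}}{(j-1)}=\sum_{i=1}^{d-1}\binom{i+\delta_i}{i}=c'$. Therefore $c_{*d}\leq c'=h_I(d-1)$, which is the desired inequality.

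For the strict inequality when $\epsilon_1\geq 0$: in that case $c$ itself has a nonzero $i=1$ term in its base-$d$ Macaulay expansion, whereas $(c')^{\langle d-1\rangle}$ always has $\epsilon_1=-1$ (its base-$d$ expansion, as computed above, has no $i=1$ contribution). So $c$ cannot equal $(c')^{\langle d-1\rangle}$; since $c\leq (c')^{\langle d-1\rangle}$ we get the strict inequality $c<(c')^{\langle d-1\rangle}$. I would then want to upgrade this to a strict inequality after applying $c_{*d}$. The cleanest way is to observe that $c_{*d}$ depends only on the coefficients $\epsilon_2,\dots,\epsilon_d$, so that $c_{*d}=(c-\binom{1+\epsilon_1}{1})_{*d}$, and $c-\binom{1+\epsilon_1}{1}<c\le (c')^{\langle d-1\rangle}$ with $c-\binom{1+\epsilon_1}{1}$ now having $\epsilon_1=-1$; comparing Macaulay expansions in base $d$ term by term then forces each $\epsilon_i\le\delta_{i-1}$ for $i\ge 2$ with at least one strict, whence $c_{*d}=\sum_{i=2}^d\binom{i+\epsilon_i-1}{i-1}<\sum_{i=2}^d\binom{i+\delta_{i-1}-1}{i-1}=c'$.

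The main obstacle is purely bookkeeping: one must be careful with the index shifts between base $d-1$ and base $d$ expansions and verify that the expansion $\sum_{j=2}^d\binom{j+\delta_{j-1}}{j}$ is genuinely the \emph{Macaulay} expansion of $(c')^{\langle d-1\rangle}$ in base $d$, i.e.\ that the exponents $\delta_{d-1}\geq\cdots\geq\delta_1\geq -1$ together with the convention $\epsilon_1=-1$ still satisfy $\epsilon_d\ge\epsilon_{d-1}\ge\cdots\ge\epsilon_1\ge -1$ and that the top coefficient is chosen maximally; this is where a degenerate case (such as $c'=0$, or $\delta_1=-1$) needs a separate sanity check. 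Once the expansions are matched up correctly, the monotonicity of the operations $c\mapsto c^{\langle d\rangle}$ and $c\mapsto c_{*d}$ does all the remaining work, and no genuinely hard estimate is involved.
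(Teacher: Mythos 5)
Your derivation of the main inequality $h_I(d-1)\ge c_{*d}$ is correct and is the intended route: you apply Theorem~\ref{thmMac} to $I_{d-1}\subset S_{d-1}$, use $I_{d-1}\cdot S_1\subseteq I_d$ to get $c\le (c')^{\langle d-1\rangle}$ with $c'=h_I(d-1)$, observe that the base-$d$ Macaulay expansion of $(c')^{\langle d-1\rangle}$ is the base-$(d-1)$ expansion of $c'$ with indices shifted up by one and $\epsilon_1=-1$ (this is indeed the Macaulay expansion, by uniqueness of expansions with weakly decreasing exponents $\ge -1$), and then apply the monotone operation $\,\cdot_{*d}$ to both sides, computing $\bigl((c')^{\langle d-1\rangle}\bigr)_{*d}=c'$. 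All of this is fine.

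The final step of your strict-inequality argument, however, rests on a false combinatorial claim. It is not true that $a<b$ forces the Macaulay coefficients of $a$ to be dominated termwise by those of $b$, even when both numbers have $\epsilon_1=-1$: in base $3$ take $a=7=\binom{4}{3}+\binom{3}{2}$ with $(\epsilon_3,\epsilon_2,\epsilon_1)=(1,1,-1)$ and $b=10=\binom{5}{3}$ with $(2,-1,-1)$; then $a<b$ but $\epsilon_2(a)=1>-1=\epsilon_2(b)$. Numerical order corresponds to \emph{lexicographic}, not componentwise, order of Macaulay expansions, so "comparing term by term'' does not give $\epsilon_i\le\delta_{i-1}$. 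The conclusion you want is nevertheless true, and the fix uses only what you have already set up: the map $e\mapsto e^{\langle d-1\rangle}$ is a monotone bijection from $\Z_{\ge 0}$ onto the set $A$ of integers whose base-$d$ expansion has $\epsilon_1=-1$, with inverse $b\mapsto b_{*d}$; a monotone bijection between totally ordered sets is strictly monotone, so its inverse is strictly increasing on $A$. Since $\tilde c:=c-(1+\epsilon_1)$ and $(c')^{\langle d-1\rangle}$ both lie in $A$ and $\tilde c<c\le (c')^{\langle d-1\rangle}$, you get $c_{*d}=\tilde c_{*d}<\bigl((c')^{\langle d-1\rangle}\bigr)_{*d}=c'$. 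Equivalently and even more directly: if $c_{*d}=c'$, then $\tilde c=(\tilde c_{*d})^{\langle d-1\rangle}=(c')^{\langle d-1\rangle}\ge c$, contradicting $\tilde c<c$.
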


For small $c$ we have the following Macaulay expansions in base $d$:
\begin{itemize}
\item For $c\leq d$ we have $\epsilon_d=\dots=\epsilon_{d-c+1}=0$ and $\epsilon_{d-c}=\dots=\epsilon_1=-1$. Hence $c^{\langle d\rangle}=c$.
\item For $d+1\leq c \leq 2d$ we have $\epsilon_d=1, \epsilon_{d-1}=\dots=\epsilon_{d-a}=0, \epsilon_{d-a-1}=\dots=\epsilon_1=-1$, where $a=c-d-1$.
Hence $c^{\langle d\rangle}=c+1$.
\item For $c=2d+1$ we have $\epsilon_d=\epsilon_{d-1}=1$ and all other $\epsilon_i$ equal $-1$. Hence $c^{\langle d \rangle}=2d+3=c+2$.
\end{itemize}

Applying the previous corollary repeatedly yields
\begin{corollary}\label{corMacLowDeg}
Let $I\subset S$ be an ideal, $d\geq 2$ an integer and $c:=h_I(d)$. For $0\leq k \leq d$ we have that
\[ h_{I}(k)\geq \left\{ \begin{array}{ll} \min(c,k+1) & \mbox{ if }c\leq d;\\
\min(k+(c-d),2k+1) &\mbox{ if } d+1\leq c\leq 2d;\\
2k+1 & \mbox{ if } c=2d+1.\end{array}\right.\]
\end{corollary}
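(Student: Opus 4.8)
The plan is to deduce everything from Macaulay's Theorem~\ref{thmMac} together with the explicit small-$c$ expansions listed just above, by a short argument by contradiction. First I record two monotonicity facts for the Hilbert function. For any homogeneous ideal $I$ and any $k\ge 1$ the inclusion $I_k\cdot S_1\subseteq I_{k+1}$ and Theorem~\ref{thmMac} give
\[ h_I(k+1)=\codim I_{k+1}\;\le\;\codim\bigl(I_k\cdot S_1\bigr)\;\le\;h_I(k)^{\langle k\rangle}. \]
Reading off $m^{\langle k\rangle}$ from the three bullet points above, one obtains: (L1) if $h_I(k)\le k$, then $h_I(k+1)\le h_I(k)$, since $m^{\langle k\rangle}=m$ for $m\le k$; and (L2) if $h_I(k)\le 2k$, then $h_I(k+1)\le h_I(k)+1$, since $m^{\langle k\rangle}\le m+1$ for $m\le 2k$. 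Each hypothesis is self-propagating: a trivial induction using (L1) shows that once $h_I(k_0)\le k_0$ for some $k_0\ge 1$ we have $h_I(k)\le h_I(k_0)$ for all $k\ge k_0$, and a trivial induction using (L2) shows that once $h_I(k_0)\le 2k_0$ for some $k_0\ge1$ we have $h_I(k)\le h_I(k_0)+(k-k_0)\le k_0+k$ for all $k\ge k_0$.

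Now write $f(k)$ for the lower bound claimed in whichever of the three cases we are in. In each case one checks directly that $f$ is non-decreasing and that $f(d)=c=h_I(d)$, so it suffices to rule out the existence of an integer $k_0$ with $0\le k_0<d$ and $h_I(k_0)<f(k_0)$. Since $f(0)\le 1$ in all three cases, the value $k_0=0$ would force $h_I(0)=0$, hence $I=S$ and $c=h_I(d)=0$; but then we are in the first case with $f(0)=0$, so $h_I(0)<f(0)$ fails. Thus $k_0\ge1$, and the lemmas above apply at every degree from $k_0$ to $d$. In the first case $f(k)=\min(c,k+1)$, so $h_I(k_0)\le k_0$ and $h_I(k_0)\le c-1$; applying (L1) from $k_0$ up to $d$ gives $h_I(d)\le h_I(k_0)\le c-1<c$, a contradiction. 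In the second case $f(k)=\min(k+c-d,\,2k+1)$, so $h_I(k_0)\le 2k_0$ and $h_I(k_0)\le k_0+c-d-1$; applying (L2) from $k_0$ up to $d$ gives $h_I(d)\le h_I(k_0)+(d-k_0)\le c-1<c$, again a contradiction. In the third case $f(k)=2k+1$, so $h_I(k_0)\le 2k_0$, and (L2) gives $h_I(d)\le h_I(k_0)+(d-k_0)\le k_0+d\le 2d-1<2d+1=c$, once more a contradiction. This establishes all three bounds.

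I expect no essential obstacle here: the entire argument is bookkeeping, and the only points requiring care are the off-by-one interplay between the two minima, the distinction between the ranges $m\le k$ and $k+1\le m\le 2k$ in the Macaulay expansion (equivalently, the strict versus non-strict clause of Corollary~\ref{corMacDown}), and the boundary degree $k=0$. One can also organise the same computation ``downward'', matching the phrase ``applying the previous corollary repeatedly'': induct on $d-k$, start from $h_I(d)=c=f(d)$, and at each step use Corollary~\ref{corMacDown} together with the monotonicity of $c\mapsto c_{*d}$ and, at the degrees where $f(k)$ is extremal, its strict-inequality clause; that version additionally needs the base-$k$ Macaulay expansion of the (small) number $f(k)$ and a brief case split according to whether $h_I(k)$ equals or exceeds $f(k)$.
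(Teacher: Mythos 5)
Your proof is correct: the two growth lemmas (L1) and (L2) are exactly what Macaulay's bound $h_I(k+1)\le h_I(k)^{\langle k\rangle}$ gives via the small-$c$ expansions, the propagation arguments are sound, and the three contradictions with $h_I(d)=c$ are computed correctly, including the boundary degree $k=0$. This is essentially the paper's approach — the paper iterates Corollary~\ref{corMacDown} downward from degree $d$, while you run the contrapositive of the same iteration upward via Theorem~\ref{thmMac}, as you yourself note in your closing remark.
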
 

The following result will be used to detect the Hilbert polynomial of the ideal generated by $I_d$:
\begin{theorem}[{Gotzmann \cite{Gotz}}]\label{thmGotz}
Let $V\subset S_d$ be a linear system and let $J\subset S$ be the ideal generated by $V$. Set $c=h_J(d)$.  If $h_J(d+1)=c^{\langle d \rangle}$ then for all $k\geq d$ we have $h_J(k+1)=h_J(k)^{\langle k \rangle}$. In particular the Hilbert polynomial $p_J(t)$ of $J$ is given by
\[ \sum_{i=1}^d \binom{t+\epsilon_i}{t}\]
and the dimension of $V(J)$ equals $\epsilon_d$. 
\end{theorem}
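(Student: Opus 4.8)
The statement splits into the \emph{persistence} assertion $h_J(k+1)=h_J(k)^{\langle k\rangle}$ for all $k\geq d$, and the description of $p_J$; the latter is a formal consequence of the former. Since $J$ is generated by $V\subset S_d$ we have $J_{k+1}=S_1\cdot J_k$ for every $k\geq d$, so Theorem~\ref{thmMac} already gives $h_J(k+1)\leq h_J(k)^{\langle k\rangle}$ at each step, and the real content is that equality at the first step propagates. Granting persistence, iterating the operation $c\mapsto c^{\langle\,\cdot\,\rangle}$ on the base-$d$ expansion $c=\sum_{i=1}^d\binom{i+\epsilon_i}{i}$ produces, for $k\geq d$, a closed form for $h_J(k)$ that is eventually a polynomial in $k$ of degree $\epsilon_d$; this is the Hilbert polynomial recorded in the statement, and since $\dim V(J)=\deg p_J$ one reads off $\dim V(J)=\epsilon_d$.

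To prove persistence I would follow Green's method, combining Theorem~\ref{thmMac} with the hyperplane restriction theorem. Fix a generic linear form $\ell$, write $\bar S=S/\ell S$ (a polynomial ring in one fewer variable) and let $\bar J$ be the image of $J$, which is again generated in degree $d$. Multiplication by $\ell$ yields, for each $k$, the exact-sequence estimate
\[ h_J(k+1)=h_J(k)-\delta_k+h_{\bar J}(k+1),\qquad \delta_k:=\dim\bigl((J:\ell)/J\bigr)_k\geq 0. \]
The two analytic inputs are Macaulay's bound $h_{\bar J}(k+1)\leq h_{\bar J}(k)^{\langle k\rangle}$ in $\bar S$ and Green's restriction bound $h_{\bar J}(k)\leq h_J(k)_{\langle k\rangle}$ for generic $\ell$. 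The glue is a pair of purely combinatorial identities, valid for every nonnegative integer $c$ with base-$k$ expansion $\sum_i\binom{i+\epsilon_i}{i}$,
\[ \bigl(c_{\langle k\rangle}\bigr)^{\langle k\rangle}=c^{\langle k\rangle}-c=\bigl(c^{\langle k\rangle}\bigr)_{\langle k+1\rangle}, \]
the common value being $\sum_i\binom{i+\epsilon_i}{i+1}$, together with the fact that the operators $c\mapsto c^{\langle k\rangle}$ and $c\mapsto c_{\langle k\rangle}$ are \emph{strictly} increasing on nonnegative integers.

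With these in hand the argument is a double induction. The outer induction is on the number of variables. Feeding the hypothesis $h_J(d+1)=h_J(d)^{\langle d\rangle}$ into the exact sequence at $k=d$, and combining with the two bounds and the first identity, squeezes $h_{\bar J}(d+1)$ between $c^{\langle d\rangle}-c$ and itself; strict monotonicity then forces $\delta_d=0$, forces Green's bound to be sharp at degree $d$, and forces $\bar J$ to satisfy the same maximal-growth hypothesis $h_{\bar J}(d+1)=h_{\bar J}(d)^{\langle d\rangle}$. By the outer inductive hypothesis (the base case of one variable being immediate) persistence holds for $\bar J$ in all degrees $\geq d$. An inner induction on $k$ then lifts this back to $J$: carrying along the auxiliary invariant that Green's bound is sharp in degree $k$, persistence of $\bar J$ gives $h_{\bar J}(k+1)=c^{\langle k\rangle}-c$ with $c=h_J(k)$, whereupon Green's bound at degree $k+1$ together with the second identity and strict monotonicity forces $\delta_k=0$, hence $h_J(k+1)=h_J(k)^{\langle k\rangle}$, while simultaneously re-establishing sharpness of Green's bound in degree $k+1$ so that the induction continues.

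The main obstacle is Green's hyperplane restriction theorem (see \cite{GreenF}), which is the one substantial external input and, if not quoted, must itself be established by an induction on the number of variables. Beyond that, the delicate point is organising the double induction so that it is not circular: one never proves persistence in a single degree $k$ in isolation, but always propagates the paired invariant ``$\delta_k=0$ and Green's bound sharp in degree $k$''. A final technical care is the choice of $\ell$: it must be taken outside the finitely many bad loci so that the restriction bound and the exact-sequence estimates hold for the one fixed form $\ell$ in every degree entering the induction.
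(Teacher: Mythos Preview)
The paper does not prove this theorem at all: it is stated as a quotation of Gotzmann's result with a bare citation \cite{Gotz} and no argument is given. Your proposal therefore goes far beyond what the paper does, supplying an actual proof sketch via Green's hyperplane-restriction method. The sketch is essentially correct and is the standard modern route to persistence; one small overclaim is that $c\mapsto c_{\langle k\rangle}$ is \emph{strictly} increasing (it is only weakly increasing, e.g.\ it is identically $0$ for $c\leq k$), but your argument never actually uses this --- the squeeze at each step only needs strict monotonicity of $c\mapsto c^{\langle k\rangle}$, which does hold. For the purposes of this paper, where Gotzmann's theorem is a black-box input recalled in Section~\ref{sectMac}, a citation would have sufficed.
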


We use this result mostly in the case where $c\leq d$:
\begin{corollary}\label{corGreen} Let $I\subset S$ be an ideal such that $h_I(d)\leq d$ and $I_{d+1}$ is base point free. Then for all $k\geq d$ we have $h_I(k+1)<h_I(k)$ or $h_I(k)=0$.
\end{corollary}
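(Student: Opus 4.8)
The plan is to derive the statement by applying Macaulay's bound (Theorem~\ref{thmMac}) and Gotzmann persistence (Theorem~\ref{thmGotz}) at every degree $k\geq d$ simultaneously. Two elementary propagation facts come first. If $h_I(k)=0$ then $I_k=S_k$, hence $I_{k+1}\supseteq S_1\cdot I_k=S_{k+1}$ and $h_I(k+1)=0$; thus once the Hilbert function vanishes it stays zero. Secondly, a projective point annihilates $x_0f,\dots,x_nf$ exactly when it annihilates $f$, so the linear systems $I_{k+1}$ and $S_1\cdot I_{k+1}$ have the same base locus as a set; since $I_{k+2}\supseteq S_1\cdot I_{k+1}$, base-point-freeness of $I_{k+1}$ propagates to $I_{k+2}$, and hence $I_{k+1}$ is base point free for every $k\geq d$.

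The heart of the argument is the following claim: if $k\geq d$, if $c:=h_I(k)$ satisfies $1\leq c\leq k$, and if $I_{k+1}$ is base point free, then $h_I(k+1)<c$. Let $J$ be the ideal generated by $I_k$, so $h_J(k)=c$ and $J\subseteq I$. Since $c\leq k$ we have $c^{\langle k\rangle}=c$, so Theorem~\ref{thmMac} gives $h_I(k+1)\leq h_J(k+1)\leq c^{\langle k\rangle}=c$. Suppose for contradiction that $h_I(k+1)=c$. Then $h_J(k+1)=c=c^{\langle k\rangle}$, so Theorem~\ref{thmGotz} applies with $V=I_k$ and yields $\dim V(J)=\epsilon_k=0$; in particular $V(J)\neq\emptyset$. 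On the other hand $J_{k+1}\subseteq I_{k+1}$ with both of codimension $c$ in $S_{k+1}$, whence $I_{k+1}=J_{k+1}=S_1\cdot I_k$, a linear system whose base locus as a set equals $V(I_k)=V(J)\neq\emptyset$. This contradicts base-point-freeness of $I_{k+1}$, and the claim follows.

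To finish I would induct on $k\geq d$ on the assertion that $h_I(k)\leq k$ and that either $h_I(k)=0$ or $h_I(k+1)<h_I(k)$. At $k=d$ this is the hypothesis $h_I(d)\leq d$ together with the claim (with the trivial case when $h_I(d)=0$). For the inductive step, the claim together with the first propagation fact gives $h_I(k+1)\leq h_I(k)\leq k\leq k+1$, restoring the bound at $k+1$; base-point-freeness of $I_{k+2}$ was already established, so the claim applies again at $k+1$ and gives the dichotomy there. The only step that is not pure bookkeeping with Macaulay expansions is turning the numerical equality $h_I(k+1)=c$ into a genuine base point of $I_{k+1}$---namely the identification $I_{k+1}=J_{k+1}$ together with $V(J)\neq\emptyset$---and this is precisely where Gotzmann's theorem, rather than just Macaulay's, is needed. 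It is also worth double-checking that no extra hypothesis such as $d\geq 2$ slips in, the two input theorems being available in all the degrees used.
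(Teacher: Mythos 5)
Your argument is correct: the reduction to the ideal $J$ generated by $I_k$, the use of Macaulay's bound with $c^{\langle k\rangle}=c$ for $c\leq k$, and the application of Gotzmann's persistence to produce a genuine base point when $h_I(k+1)=h_I(k)$ is exactly the standard proof of this statement, and it is the argument behind the reference the paper cites for it. The bookkeeping (propagation of base-point-freeness and of the bound $h_I(k)\leq k$) is also handled correctly, so nothing further is needed.
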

\begin{proof}
See \cite[Corollary 2.5]{KloNod}.
%
\end{proof}

%

\section{Nodal complete intersections}\label{secNodCI}
\begin{notation}
Let $n=2k+1$ be a positive odd integer,  $c$ be a positive integer, and $(w_0,\dots,w_{n+c})$ a sequence of positive integers. Let us denote with $\Ps:= \Ps(w_0,\dots, w_{n+c})$ the associated weighted projective space. Let $S=\C[x_0,\dots,x_{n+c}]$ be the graded polynomial ring such that $\deg x_i=w_i$.
\end{notation}

\begin{definition}
We say that a codimension $c$ complete intersection $X\subset \Ps$ is \emph{a nodal complete intersection of codimension $c$}, if
\begin{enumerate}
\item for all $p\in \Ps_{\sing}\cap X$ we have that $X$ is quasi-smooth at $p$ and
\item for all $p\in X_{\sing} \setminus(\Ps_{\sing}\cap X)$ we have that $(X,p)$ is an $A_1$-singularity.
\end{enumerate}
Let $\Sigma$ denote the set $X_{\sing} \setminus(\Ps_{\sing}\cap X)$.
\end{definition}

\begin{proposition}\label{prpLef} Let $X\subset \Ps$ be a nodal complete intersection of codimension $c$ then for $i<n$ 
\[ \dim H^i(X)=\dim H^i(\Ps).\]
Moreover, for $i<n-1$ we have
\[ \dim H^i(X)=\dim H^{2n-i}(X).\]
\end{proposition}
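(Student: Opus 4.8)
The plan is to compare $X$ both with its resolution of singularities and with a nearby quasi-smooth complete intersection, reducing everything to the weak Lefschetz theorem, Poincar\'e duality, and the local topology of an $A_1$-singularity. First I would dispose of the quasi-smooth case: if $X'\subset\Ps$ is a quasi-smooth complete intersection of codimension $c$, it has at worst cyclic quotient singularities (inherited from $\Ps_{\sing}$), hence is a $\Q$-homology manifold, so $H^i(X';\Q)\cong H^{2n-i}(X';\Q)$ by Poincar\'e duality; and the weak Lefschetz theorem, applied inductively to the quasi-smooth ``hyperplane'' sections cutting $X'$ out of $\Ps$, gives $H^i(X')\cong H^i(\Ps)$ for $i<n$. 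Both assertions of the Proposition thus hold for quasi-smooth complete intersections.

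Next I would deform $X$. Choose defining forms $f_1,\dots,f_c$ of $X$ and a general one-parameter family $f_{j,t}$ with $f_{j,0}=f_j$; then $X_0=X$ while $X_t$ is a quasi-smooth complete intersection for $0<|t|\ll 1$. Since the points of $\Sigma$ lie in the smooth locus of $\Ps$ and are ordinary double points, a general such family has total space $\mathcal X\subset\Ps\times\Delta$ smooth along $\Sigma$, and the Milnor fiber of $X$ at each $p\in\Sigma$ is homotopy equivalent to the $n$-sphere. Consequently the vanishing-cycle complex of $\mathcal X\to\Delta$ is supported on the finite set $\Sigma$, with reduced Milnor-fiber cohomology vanishing outside degree $n$. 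Using $H^i(X)\cong H^i(\mathcal X)$ together with the identification of $H^i(X_t)$ with the hypercohomology of the nearby-cycle complex, the distinguished triangle relating the constant sheaf, the nearby cycles and the vanishing cycles yields a long exact sequence
\[ \cdots\longrightarrow H^i(X)\longrightarrow H^i(X_t)\longrightarrow \bigoplus_{p\in\Sigma}\widetilde H^i(F_p)\longrightarrow H^{i+1}(X)\longrightarrow\cdots, \]
whose middle term is nonzero only for $i=n$. Hence $H^i(X)\cong H^i(X_t)$ for all $i\notin\{n,n+1\}$.

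Combining the two steps gives the Proposition. For $i<n$ we obtain $H^i(X)\cong H^i(X_t)\cong H^i(\Ps)$, the first assertion. For $i<n-1$ we have $2n-i>n+1$, so both $i$ and $2n-i$ lie outside $\{n,n+1\}$, and
\[ H^i(X)\cong H^i(X_t)\cong H^{2n-i}(X_t)\cong H^{2n-i}(X), \]
where the middle isomorphism is Poincar\'e duality on the quasi-smooth fiber $X_t$. The argument breaks down precisely at $i=n-1$, where $2n-i=n+1$ and $H^{n+1}(X)$ may be strictly larger than $H^{n+1}(X_t)$; this excess is exactly the defect studied in this paper, which explains why $i=n-1$ is excluded from the second assertion.

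The main technical obstacle is the deformation step: one must exhibit a one-parameter smoothing whose total space is smooth along $\Sigma$ (so that the vanishing-cycle package applies verbatim) and check that the locus $\Ps_{\sing}\cap X$, where $X$ is only quasi-smooth rather than smooth, causes no interference --- there the family can be taken equisingular, the nearby-cycle sheaf is already the constant sheaf, and no vanishing cohomology is created. Granting this, the statement is a formal consequence of weak Lefschetz and Poincar\'e duality for quasi-smooth complete intersections.
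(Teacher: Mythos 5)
Your argument is correct in outline and reaches the stated conclusions; note that the paper itself does not reprove this statement but defers to \cite[Proposition~3.3]{KloNod}, so I compare your proof against the standard argument rather than against an in-text one. The smoothing/vanishing-cycle route you take (reduce to a quasi-smooth nearby fiber, where weak Lefschetz and Poincar\'e duality for $\Q$-homology manifolds apply, and observe that the vanishing cohomology of an $A_1$-point in dimension $n$ sits only in degree $n$) is a legitimate and essentially self-contained way to get both assertions, and your remark that the failure at $i=n-1$ is exactly the defect is the right consistency check. Two places deserve more than the sentence you give them. First, the existence of the good smoothing at the nodes: you need the total space $\mathcal X$ to be smooth along $\Sigma$ with $t$ having a nondegenerate critical point there; this follows because at a node the Jacobian matrix $M(p)$ of $(f_1,\dots,f_c)$ has rank exactly $c-1$ (as in Proposition~\ref{prpIsol}), so a general perturbation $f_j+tg_j$ makes the relative Jacobian of full rank and the local model $\sum x_i^2=t\cdot(\text{unit})$ --- this should be said explicitly. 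Second, your treatment of $\Ps_{\sing}\cap X$ is slightly off as phrased: the family is not literally equisingular there (the fibers $X_t$ do move), and the correct justification is that near such a point $\mathcal X\to\Delta$ is the quotient by a finite group of a smooth family $\tilde{\mathcal X}\to\Delta$ that is a submersion (quasi-smoothness being open), so $\phi_t\Q_{\mathcal X}=(\phi_t\Q_{\tilde{\mathcal X}})^G=0$ with rational coefficients; with that substitution the support statement for the vanishing cycles is genuinely established. With these two points filled in, the proof is complete.
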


\begin{proof}
See \cite[Proposition 3.3]{KloNod}.
\end{proof}

The proof of the  above result suggests that   $h^{n+1}(X,\Q)$ may be strictly larger than $h^{n-1}(X,\Q)$.
\begin{definition}
The \emph{defect} $\delta$ of $X$ equals $h^{n+1}(X,\Q)-h^{n-1}(X,\Q)$.
\end{definition}

\begin{remark}
If $n=3$ then $\delta$ equals the rank of the group $\mathrm{CH}^1(X)/\Pic(X)$. Since this group is free, $\delta$ measures the failure of Weil divisors to be Cartier.
\end{remark}

\begin{lemma}\label{lemDefLoc} Let $X$ be a nodal complete intersection. Let $\cD$ be the equisingular deformation space of $X$. Then the locus 
\[ \{ X'\in \cD  \mid \delta(X')=\delta(X)\}\]
is a Zariski open subset of $\cD$.
\end{lemma}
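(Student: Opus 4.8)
The plan is to prove the stronger assertion that $\delta$ is a locally constant function on $\cD$; the lemma then follows formally. Indeed, $\cD$ is a complex variety, and a function that is locally constant in the Euclidean topology is constant on each irreducible component of $\cD$ (an irreducible variety being Euclidean‑connected, since its smooth locus is a connected complex manifold that is Euclidean‑dense). Hence every level set of such a function is a finite union of irreducible components of $\cD$, in particular Zariski closed; moreover, if $\delta$ took the value $\delta(X)$ on one component and a different value on another, these two components would have to be disjoint, so the complement of $\{X'\in\cD\mid\delta(X')=\delta(X)\}$ is again a union of irreducible components, and our locus is also Zariski open.

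To see that $\delta$ is locally constant, let $\pi\colon\mathcal X\to\cD$ be the tautological family, with fibre $X'$ over $X'\in\cD$. By construction every fibre is a nodal complete intersection with the same number $s$ of nodes, each an ordinary double point; the singular points of the fibres form a closed subvariety $\Sigma\subset\mathcal X$ finite over $\cD$, and both $\Sigma$ and $\mathcal X\setminus\Sigma$ are smooth over $\cD$ (after refining the stratification suitably along $\Ps_{\sing}$ if $\Ps$ is singular). The essential point is that $\pi$ is, locally on $\cD$, a topologically trivial fibration: stratifying $\mathcal X$ by $(\Sigma,\ \mathcal X\setminus\Sigma)$ yields a Whitney stratification for which the proper map $\pi$ is a stratified submersion, so Thom's first isotopy lemma applies and all fibres of $\pi$ are, locally over $\cD$, compatibly homeomorphic. (Equivalently, after a finite étale base change $\cD'\to\cD$ one may simultaneously blow up the $s$ sections of nodes to obtain a smooth proper family $\widehat{\mathcal X}\to\cD'$, each fibre of which is the blow‑up of the corresponding $X'$ along its nodes, with exceptional divisors a fixed smooth quadric of dimension $n-1$.) Consequently, by Ehresmann's theorem, each $R^m\pi_*\Q$ is a local system on $\cD$, so $X'\mapsto\dim H^m(X',\Q)$ is locally constant for every $m$; taking $m=n-1$ and $m=n+1$ shows that $X'\mapsto\delta(X')=h^{n+1}(X',\Q)-h^{n-1}(X',\Q)$ is locally constant.

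The only real difficulty is this first step --- the local topological triviality of the equisingular family, equivalently the existence of a simultaneous resolution after an étale base change --- after which the argument is routine. It is worth emphasising that one inequality comes essentially for free: by Proposition~\ref{prpDefCI} the defect equals the dimension of the cokernel of a morphism of coherent sheaves on $\cD$, and such a dimension is upper semicontinuous, so $\{X'\in\cD\mid\delta(X')\le\delta(X)\}$ is automatically Zariski open; the content of Lemma~\ref{lemDefLoc} is really the matching lower bound $\delta(X')\ge\delta(X)$ near $X$, which is exactly what the topological triviality input provides.
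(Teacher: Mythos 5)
Your argument is correct, and in fact it establishes the stronger statement that $\delta$ is locally constant on $\cD$ (so the level set is Zariski clopen, not merely open). The paper itself does not prove the lemma here --- it only cites \cite[Lemma 3.6]{KloNod} --- so there is no in-text proof to compare against; but topological local triviality of the equisingular family (Thom--Mather, or equivalently simultaneous blow-up of the node sections after an \'etale base change followed by Ehresmann) is the standard route, and your reduction from ``locally constant in the Euclidean topology'' to ``Zariski open'', via connectedness of irreducible components and the observation that components carrying different values of $\delta$ must be disjoint, is complete. Two small points deserve more care than your sketch gives them. First, the isotopy lemma is stated over a smooth base, while $\cD$ may be singular; the usual fix is to verify constancy of $X'\mapsto h^m(X')$ along every holomorphic disc (or normalized curve germ) in $\cD$, which suffices for a constructible function, or to combine constancy on the Euclidean-dense smooth locus of each component with the semicontinuity you already point out in your last paragraph. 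Second, the attribution of the local-system statement to Ehresmann applied to $\pi$ itself is loose (the fibres of $\pi$ are singular); it should be drawn from the stratified local triviality, or from Ehresmann applied only to the resolved family $\widehat{\mathcal X}\to\cD'$ together with the blow-up exact sequence. Neither point affects the validity of the approach, and your closing remark correctly isolates where the real content lies: upper semicontinuity of the corank in Proposition~\ref{prpDefCI} gives $\{\delta\le\delta(X)\}$ open for free, and the topological input supplies the opposite inequality.
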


\begin{proof}
See \cite[Lemma 3.6]{KloNod}.
\end{proof}

%
%
%

\section{Cayley trick}\label{secCayley}
Let $X\subset \Ps(w_0,\dots,w_{n+c})=:\Ps(\bw)$ be an $n$-dimensional complete intersection of multidegree $(d_1,\dots,d_c)$ with equations $f_1=\dots=f_c=0$. Set $\cE=\oplus_{i=1}^c \cO(d_i)$. Let $\Ps:=\Ps(\cE)$. Take coordinates $x_0,\dots,x_{n+c},y_1,\dots,y_c$ for $\Ps$ (cf. \cite[Section 2]{MavToric}). Let $Y\subset \Ps(\cE)$ be the hypersurface defined by
\[ F:=\sum y_if_i=0.\] 
Then $F\in H^0(\Ps(\cE), \cO_{\Ps(\cE)}(1))$.

The Cox ring of $\Ps(\cE)$ is generated by the $x_i$ and $y_j$. This is a bigraded ring with $\deg(x_i)=(w_i,0)$ and $\deg(y_j)=(-d_j,1)$. In particular, $\deg(F)=(0,1)$.
\begin{lemma}[Cayley trick]\label{lemCay} For $0\leq i \leq 2n$ we have a natural isomorphism
\[ H^i(X,\Q)\cong H^{2c+i-2}(Y,\Q)(c-1)\]
\end{lemma}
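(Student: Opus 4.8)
The isomorphism will come from the projection $\pi\colon Y\to\Ps(\bw)$ restricting the bundle map $\Ps(\cE)\to\Ps(\bw)$, and the strategy is to compute $H^\bullet(Y,\Q)$ from the Leray spectral sequence of $\pi$, locate inside it the summand isomorphic to $H^\bullet(X,\Q)$, and match the Tate twist. First I analyse the fibres. If $p\notin X$ then some $f_i(p)\neq 0$, so $F|_{\pi^{-1}(p)}$ is a nonzero linear form on $\pi^{-1}(p)\cong\Ps^{c-1}$ and the fibre of $\pi|_Y$ over $p$ is a hyperplane $\cong\Ps^{c-2}$; if $p\in X$ then all $f_i$ vanish at $p$, so $F$ vanishes identically on $\pi^{-1}(p)$, the locus $W:=\pi^{-1}(X)$ lies inside $Y$, and $W\to X$ is the $\Ps^{c-1}$-bundle obtained by restriction. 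Write $h:=c_1(\cO_{\Ps(\cE)}(1))|_Y$. Since $h$ restricts to an ample generator of $H^2$ of every fibre, the classes $1,h,\dots,h^{c-2}$ trivialise $R^{2s}(\pi|_Y)_*\Q\cong\Q_{\Ps(\bw)}(-s)$ for $0\le s\le c-2$; by proper base change the only remaining nonzero direct image is $R^{2c-2}(\pi|_Y)_*\Q$, which is supported on $X$ and equals $\iota_*\Q_X(-(c-1))$ for $\iota\colon X\hookrightarrow\Ps(\bw)$.

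By Leray--Hirsch the assignments $\alpha\mapsto\pi^*\alpha\cup h^s$ split off $\bigoplus_{s=0}^{c-2}H^{\bullet-2s}(\Ps(\bw),\Q)(-s)$ as a direct summand of $H^\bullet(Y,\Q)$, leaving the contribution of the top row $E_2^{p,2c-2}=H^p(X,\Q)(-(c-1))$. No differential enters this row, and a differential $d_r$ ($r\ge 2$) leaving it has target zero unless $r$ is odd, in which case it lands in a subquotient of $H^{p+r}(\Ps(\bw),\Q)(-(2c-1-r)/2)$, which (as $\Ps(\bw)$ is a $\Q$-homology manifold) is pure of weight $p+2c-1$; since $Y$, and hence $X$, is proper, $H^p(X,\Q)$ has weights at most $p$, so $E_2^{p,2c-2}$ has weights at most $p+2c-2$, and strictness of morphisms of mixed Hodge structures forces $d_r=0$. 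Thus the spectral sequence degenerates at $E_2$, and since all the maps involved are morphisms of mixed Hodge structures,
\[ H^m(Y,\Q)\ \cong\ \bigoplus_{s=0}^{c-2}H^{m-2s}(\Ps(\bw),\Q)(-s)\ \oplus\ H^{m-2c+2}(X,\Q)(-(c-1)),\]
the last summand being the bottom step of the Leray filtration of $\pi|_Y$. Taking $m=2c+i-2$ and twisting by $(c-1)$ identifies $H^i(X,\Q)$ with the corresponding summand of $H^{2c+i-2}(Y,\Q)(c-1)$; the other summands are pulled back from $\Ps(\bw)$ and, after the twist, are precisely the classes that $X$ inherits from the ambient space as in Proposition~\ref{prpLef}, so they do not affect the comparison. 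This identification is visibly natural, being induced by $\pi$ and the inclusion $W\hookrightarrow Y$.

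The fibre analysis and the application of Leray--Hirsch are routine; the step that needs care is the degeneration of the spectral sequence. One cannot argue by purity there, since the nodal $X$ has impure cohomology; instead one uses the bound on the weights of $H^\bullet$ of a proper variety together with strictness, and then tracks the Tate twists carefully throughout. A variant that avoids the spectral sequence is to run in parallel the localisation sequences of mixed Hodge structures of the pairs $(\Ps(\cE),Y)$ and $(\Ps(\bw),X)$, using that $\Ps(\cE)\setminus Y\to\Ps(\bw)\setminus X$ is an affine $\C^{c-1}$-bundle (so the Thom isomorphism computes its compactly supported cohomology) together with the projective bundle formula for $H^\bullet(\Ps(\cE),\Q)$; this yields the same decomposition.
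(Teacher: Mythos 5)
Your argument is correct in substance but follows a genuinely different route from the paper. You compute $H^\bullet(Y,\Q)$ via the Leray spectral sequence of the projection $\pi|_Y\colon Y\to\Ps(\bw)$, using the jump of the fibres from $\Ps^{c-2}$ to $\Ps^{c-1}$ over $X$, a multiplicative (Leray--Hirsch type) argument to kill differentials out of the low rows, and a weight/strictness argument to kill differentials out of the top row $H^p(X,\Q)(-(c-1))$. The paper instead never touches $Y\to\Ps(\bw)$ directly: it passes to complements, using the Gysin sequences for compactly supported cohomology of the pairs $(\Ps(\bw),X)$ and $(\Ps(\cE),Y)$, Poincar\'e duality for the $\Q$-homology manifolds $\Ps(\bw)\setminus X$ and $\Ps(\cE)\setminus Y$, and the fact that $\Ps(\cE)\setminus Y\to\Ps(\bw)\setminus X$ is a $\C^{c-1}$-bundle --- essentially the ``variant'' you sketch in your last sentence. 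Two caveats on your version: first, degeneration at $E_2$ via weights requires knowing that the Leray spectral sequence of this (projective) morphism is a spectral sequence of mixed Hodge structures, which is a nontrivial theorem (Arapura, or Saito's mixed Hodge modules) and a heavier input than anything the paper uses; second, degeneration gives the Leray filtration, so what you canonically obtain is an exact sequence exhibiting $H^{i}(X,\Q)(-(c-1))$ as the quotient of $H^{2c+i-2}(Y,\Q)$ by the classes $\pi^*\alpha\cup h^s$, not a priori a direct sum of mixed Hodge structures. This is harmless here, and in fact matches what the paper itself proves: its chain of isomorphisms establishes $H^i_c(X)_{\prim}\cong H^{2c+i-2}_c(Y)_{\prim}(c-1)$, i.e.\ the statement modulo the ambient classes, exactly the part your decomposition isolates (the displayed lemma, read literally for full cohomology, would fail already for smooth $X$ by Lefschetz applied to the ample divisor $Y\subset\Ps(\cE)$). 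What each approach buys: yours gives the finer, full description of $H^\bullet(Y,\Q)$ and localizes the geometric input in the fibre structure; the paper's is shorter, avoids all degeneration questions and heavy Hodge-theoretic machinery, and handles the nodal (non-smooth) situation with nothing beyond duality for $\Q$-homology manifolds and homotopy invariance along the affine bundle.
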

\begin{proof} Note that we have the following chain of isomorphisms
\begin{eqnarray*}
H^i_c(X)_{\prim}&\stackrel{Gysin}{\cong}&H^{i+1}_c(\Ps(\bw)\setminus X)\\
&\stackrel{PD}{\cong}&H^{2n+2c-i-1}(\Ps(\bw)\setminus X) (n+c-i-1)\\&=&H^{2n+2c-i-1}(\Ps\setminus Y)(n+c-i-1)\\&\stackrel{PD}{\cong}&H^{2c+i-1}_c(\Ps\setminus Y) (c-1) \\&\stackrel{Gysin}{\cong}&H^{2c+i-2}_c(Y)_{\prim}(c-1) \end{eqnarray*}
In the first and last line we used the Gysin exact sequence for cohomology with compact support. In the second and second to last line we used Poincar\'e duality (the complement of a hypersurface in weighted projective space is a $\Q$-homology manifold) and in the middle we used that $\Ps\setminus Y$  is a $\C^{c-1}$-bundle over $\Ps(\bw) \setminus X$.
\end{proof}

For a subvariety of a toric variety one has the notion of quasismoothness.   It follows easily that $X\subset \Ps(\bw)$ is not quasismooth at $p$ if and only if the rank of $M(p)=\left(\frac{\partial f_i}{\partial x_j}(p)\right)$ is strictly less than $c$. Similarly, we have that  $Y$ is not quasismooth at $(p,q)\in \Ps(\cE)$ if all the partial derivatives of $\sum_{i=1}^c y_if_i$ vanish simultaneously. 

\begin{definition}
Let $T$ be a toric variety and let $X\subset T$ be a subvariety, then with $X_{\sing}$ we denote the set of points $p\in X$ such that $X$ is not quasi-smooth at $p$.
\end{definition}

\begin{proposition} The projection $\Ps(\cE)\to \Ps(\bw)$ restricts to a surjective morphism $\psi:Y_{\sing}\to X_{\sing}$. For $p\in X_{\sing}$ the fiber $\psi^{-1}(p)$ is a linear space of dimension $c-\rk M(p)-1$. In particular, if $X$ is quasi-smooth then so is $Y$.
\end{proposition}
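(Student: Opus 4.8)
The plan is to analyze the fibers of the projection $\pi\colon \Ps(\cE)\to\Ps(\bw)$ over points of $X$ directly in local coordinates, using the explicit criterion for non-quasismoothness recorded just before the statement. First I would recall that $Y$ is cut out by $F=\sum y_if_i$, so $Y\cap\pi^{-1}(p)$ is the hyperplane $\{\sum y_i f_i(p)=0\}\subset\pi^{-1}(p)\cong\Ps^{c-1}$; if $p\notin X$ this is a genuine hyperplane, while if $p\in X$ we have $f_i(p)=0$ for all $i$, so the entire fiber $\pi^{-1}(p)\cong\Ps^{c-1}$ lies in $Y$. Thus set-theoretically $\psi^{-1}(p)$ can only be nonempty when $p\in X$, and the image of $Y_{\sing}$ under $\pi$ is contained in $X$.

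Next I would compute, for a point $(p,q)$ with $p\in X$, when $Y$ fails to be quasismooth there. The partials of $F$ are $\partial F/\partial y_j = f_j(p) = 0$ for all $j$ (automatically, since $p\in X$) and $\partial F/\partial x_k = \sum_{i=1}^c q_i\,\partial f_i/\partial x_k(p)$. So $(p,q)\in Y_{\sing}$ if and only if $\sum_i q_i \partial f_i/\partial x_k(p)=0$ for all $k$, i.e.\ if and only if the vector $q=(q_1,\dots,q_c)$ lies in the left kernel of the matrix $M(p)=(\partial f_i/\partial x_k(p))$. Consequently $\psi^{-1}(p)$ is the projectivization of $\ker M(p)^{T}$, a linear subspace of $\pi^{-1}(p)\cong\Ps^{c-1}$ of dimension $(c-\rk M(p))-1$. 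In particular $\psi^{-1}(p)\neq\emptyset$ exactly when $\rk M(p)<c$, which by the quoted criterion is exactly the condition $p\in X_{\sing}$; this simultaneously gives surjectivity of $\psi$ onto $X_{\sing}$, the claimed fiber dimension, and the last assertion: if $X$ is quasismooth then $\rk M(p)=c$ at every point of $X$, so $Y_{\sing}$ is empty over $X$, and since it is also empty away from $X$, $Y$ is quasismooth.

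The one point that needs genuine care — and which I expect to be the main obstacle — is making the ``quasismoothness'' bookkeeping on the toric variety $\Ps(\cE)$ precise: the naive Jacobian criterion must be taken with respect to the Cox coordinates, accounting for the irrelevant ideal and the two-dimensional grading with $\deg y_j=(-d_j,1)$, so one must check that the vanishing of all Cox-partials of $F$ really does characterize non-quasismoothness of $Y$ (this is the content of the remark preceding the statement, which I may invoke) and that no spurious contributions come from points lying over $\Ps(\bw)_{\sing}$. Modulo that, the argument is the linear-algebra computation above, and I would also note en passant that over a point $p\in\Sigma$ (an honest $A_1$-point of $X$, away from $\Ps(\bw)_{\sing}$) one has $\rk M(p)=c-1$, so the fiber $\psi^{-1}(p)$ is a single reduced point — this is the one-to-one correspondence of nodes alluded to in the introduction, though it is not strictly needed for the statement as phrased.
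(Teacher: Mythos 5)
Your argument is correct and is essentially the paper's own proof: both compute the partials of $F=\sum y_if_i$ with respect to the $y_j$ (giving $f_j$, hence $\psi(Y_{\sing})\subset X$) and with respect to the $x_k$ (giving the linear condition that $q$ lies in the left kernel of $M(p)$), and both conclude the fiber is a linear space of projective dimension $c-\rk M(p)-1$, nonempty exactly when $\rk M(p)<c$. Your extra remarks on the Cox-coordinate quasismoothness criterion and on nodes having singleton fibers are consistent with the paper (the latter is its Proposition~\ref{prpIsol}) but not needed here.
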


\begin{proof}
A point $(p,q)$ on  $Y$ is not quasismooth if and only if all the $\frac{\partial{F}}{y_i}$ and all the $\frac{\partial{F}}{x_j}$ vanish simultaneously at $(p,q)$.
 The partials with respect to $y_i$ are precisely the $f_i$. Hence $f_i(p)=0$ holds for $i=1,\dots c$, which yields   $\psi(Y_{\sing})\subset X$.

The partials of $F$ with respect to $x_i$ are $\sum_j y_j \frac{\partial f_j}{x_i}$. Hence, if $(p,q)$ is  a singular point of $Y$, then $\rank M(p)<c$ and therefore $\psi(Y_{\sing})\subset X_{\sing}$.

Let $p\in X_{\sing}$. Then $(p,q)$ is a singular point of $Y$ if and only if  $M(p)q^T=0$. In particular,
the fiber of $\psi$  is a linear space of  dimension $c-\rk M(p)-1$.
\end{proof}

\begin{proposition}\label{prpIsol} Suppose $p$ is an isolated hypersurface singularity of $X$. Then $ \psi^{-1}(p)$ is a point.\end{proposition}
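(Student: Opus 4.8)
The plan is to reduce the statement to the corank of the Jacobian matrix $M(p)$. By the preceding proposition, $\psi^{-1}(p)$ is a linear subspace of the fibre $\Ps^{c-1}$ of $\Ps(\cE)\to\Ps(\bw)$ over $p$, of dimension $c-\rk M(p)-1$ — concretely, the projectivisation of $\{q\in\C^c\mid M(p)q^T=0\}$, as in the proof of that proposition. Hence $\psi^{-1}(p)$ is a single point precisely when $\rk M(p)=c-1$, and this is what I would prove. One half is immediate: $p$ is a singular point of $X$, hence $p\in X_{\sing}$, so $\rk M(p)<c$, i.e. $\rk M(p)\le c-1$.

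For the reverse inequality I would compute the Zariski tangent space $T_pX$. Here $p$ is a smooth point of $\Ps(\bw)$ (automatic in the applications, where $p\in\Sigma$), so $X$ is cut out near $p$ by $f_1,\dots,f_c$ inside the $(n+c)$-dimensional ambient and $T_pX=\bigcap_{i=1}^c\ker\bigl(df_i(p)\bigr)$; using Euler's relation $\sum_j w_jx_j\,\partial f_i/\partial x_j=d_if_i$ (so that $df_i(p)$ kills the Euler direction and descends to $T_p\Ps(\bw)$ without dropping rank) one gets $\dim T_pX=(n+c)-\rk M(p)$. On the other hand $X$, being a complete intersection, is Cohen--Macaulay and hence equidimensional of dimension $n$ at $p$; and an isolated \emph{hypersurface} singularity of dimension $n$ has, by definition, embedding dimension exactly $n+1$ (one more than its Krull dimension, since $p$ is genuinely singular). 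Comparing, $(n+c)-\rk M(p)=n+1$, so $\rk M(p)=c-1$ and $\psi^{-1}(p)$ is a linear space of dimension $0$, i.e. a single point.

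The point that carries the argument — more a subtlety than a real obstacle — is the word \emph{hypersurface}: an isolated singularity on a codimension-$c$ complete intersection need not have $\rk M(p)=c-1$ (a cone-like isolated singularity can have Jacobian corank $2$, or more), so one genuinely needs that $\cO_{X,p}$ has embedding dimension one more than its dimension. Once that is granted, the conclusion drops out of the preceding proposition with no further computation.
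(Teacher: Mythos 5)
Your proof is correct and follows essentially the same route as the paper: both reduce the claim, via the preceding proposition, to showing $\rk M(p)=c-1$, and both derive that equality from the defining property of a hypersurface singularity --- the paper by choosing local coordinates in which $g_i=x_i$ for $i<c$ and the last equation has vanishing differential (so the Jacobian visibly has rank exactly $c-1$), you by the equivalent observation that the Zariski tangent space of an isolated hypersurface singularity of dimension $n$ has dimension $n+1$. The ``invariance of the rank of the Jacobian under change of coordinates and equations'' that the paper invokes is precisely your identification of $(n+c)-\rk M(p)$ with the embedding dimension, so the two arguments differ only in phrasing.
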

\begin{proof}  
Locally $(X,p)$ is defined by $c$ equations $g_1=g_2=\dots=g_c=0$ in $\C^{n+c}$. Using the chain rule it follows that the rank the of the Jacobian matrix of $(g_1,\dots, g_c)$ is independent of the choice of local coordinates and equations. Since $(X,p)$ is a hypersurface singularity we can find local coordinates $x_1,\dots,x_{n+c}$ on $\C^{n+c}$ such that $g_i=x_i$ for $i=1,\dots,c-1$ and $g_c$ defines the hypersurface singularity, i.e., all its partials vanish at $p$. In particular, the rank of the Jacobian matrix is precisely $c-1$.

The previous proposition  implies that $\psi^{-1}(p)$ is a zero-dimensional linear space, i.e., a point.
\end{proof}

\begin{lemma}\label{lemNode} If $X$ is a nodal complete intersection, then $Y$ is a nodal hypersurface.\end{lemma}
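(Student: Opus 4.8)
The plan is to reduce the claim to a purely local computation at each singular point of $Y$, using the description of $Y_{\sing}$ and its fibers over $X_{\sing}$ from the two preceding propositions. Since $X$ is nodal, there are two types of points to consider: first, the points $p\in \Ps(\bw)_{\sing}\cap X$, where $X$ is quasi-smooth; second, the $A_1$-points $p\in\Sigma=X_{\sing}\setminus(\Ps(\bw)_{\sing}\cap X)$. For the first type, the proposition stating that quasi-smoothness of $X$ implies quasi-smoothness of $Y$ handles matters directly: the only candidate singularities of $Y$ lie over $X_{\sing}$, and over a quasi-smooth point of $X$ the fiber is empty (dimension $c-\rk M(p)-1$ with $\rk M(p)=c$). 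So $Y$ is quasi-smooth away from the preimage of $\Sigma$, and it remains to show that $Y$ has an ordinary double point at the unique point of $\psi^{-1}(p)$ for each $p\in\Sigma$.

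**Local analysis at a node.** Fix $p\in\Sigma$. Since $(X,p)$ is an $A_1$-singularity, it is in particular an isolated hypersurface singularity, so by Proposition~\ref{prpIsol} the fiber $\psi^{-1}(p)$ is a single point $(p,q)$, and $\rk M(p)=c-1$. I would now choose local analytic coordinates as in the proof of Proposition~\ref{prpIsol}: coordinates $x_1,\dots,x_{n+c}$ on $\C^{n+c}$ near $p$ and local equations $g_1=x_1,\dots,g_{c-1}=x_{c-1}$, $g_c$ for $X$, where $g_c$ has vanishing gradient at $p$ and, because $(X,p)$ is a node, a nondegenerate Hessian — i.e., after a further coordinate change $g_c=x_c^2+x_{c+1}^2+\dots+x_{n+c}^2+(\text{higher order})$, hence analytically $g_c=\sum_{i=c}^{n+c}x_i^2$. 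The local equation of $Y$ near $(p,q)$ is $F=\sum_{i=1}^c y_i g_i$ in the coordinates $x_1,\dots,x_{n+c}$ together with affine fiber coordinates for the $\Ps^{c-1}$-bundle; writing $u_i=y_i/y_j$ for the chart where $y_j\neq 0$ (with $q$ corresponding to the origin in these $u$'s after translating), the equation becomes, up to a unit, $x_1 + u_1 x_2 + \dots + (\text{terms involving } x_1,\dots,x_{c-1}) + g_c$, i.e. a nondegenerate linear-plus-quadratic form. I would verify that its Hessian at the origin is nondegenerate of the right rank, so that $(Y,(p,q))$ is again an $A_1$-singularity.

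**The main obstacle.** The conceptually routine but technically delicate step is the local bookkeeping in the chart of $\Ps(\cE)$: one must carry the fiber coordinates correctly, confirm that $(p,q)$ really is the origin in a suitable chart (which is where $\rk M(p)=c-1$ and the explicit form of $q$ as a generator of $\ker M(p)$ enter), and check that the quadratic part of $F$ in all $n+2c-1$ local variables — the $n+c$ base variables minus the $c-1$ used up, plus the $c-1$ fiber variables — is nondegenerate. Concretely, the terms $u_k x_{k+1}$ for $k=1,\dots,c-1$ pair off each fiber variable $u_k$ with a base variable $x_{k+1}$, and $g_c=\sum_{i=c}^{n+c}x_i^2$ contributes the remaining $n+1$ squares; together these give a rank-$(n+2c-1)$ quadratic form, which is exactly the maximal rank for an ambient space of dimension $n+2c-1$, i.e. an ordinary double point. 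One should also record that distinct nodes of $X$ give distinct nodes of $Y$ (immediate, since $\psi$ is a bijection over $\Sigma$), establishing the one-to-one correspondence of singular loci promised in the introduction. Finally, the quasi-smoothness at points over $\Ps(\bw)_{\sing}$ should be noted to complete the verification that $Y$ is a nodal hypersurface in the sense of the paper's definition.
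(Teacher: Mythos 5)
Your proof is correct and takes essentially the same route as the paper: reduce to the unique point of $\psi^{-1}(p)$ over each node and exhibit the local equation of $Y$ there as a quadratic form of maximal rank $n+2c-1$ (the paper normalizes the Morse equation to be $g_1$ and works in the chart $y_1\neq 0$; you put it last, which is equivalent). One bookkeeping slip: since $q$ spans the left kernel of $M(p)$, with $g_i=x_i$ for $i<c$ and $\nabla g_c(p)=0$ the point $q$ is $(0:\dots:0:1)$, so the correct chart is $y_c\neq 0$ and the local equation is $\sum_{k=1}^{c-1}u_kx_k+\sum_{i=c}^{n+c}x_i^2$ with no linear part --- the displayed form ``$x_1+u_1x_2+\dots$'' in your second paragraph has a nonzero linear term and would describe a smooth point; your final rank count $2(c-1)+(n+1)=n+2c-1$ is nonetheless the right one.
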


\begin{proof}
Let $p$ be a singular point of $X$, and $(p,q)$ be the corresponding point on $Y$. Then we can find a coordinate change in a neighbourhood of $p$  such that $(X,p)$ is locally given by $\sum_{i=1}^{n+1}(x_i^2)=x_{n+2}=\dots=x_{n+c}=0$. Similarly, after a local coordinate change  in a neighbourhood of $(p,q)$ we have that   the equation for  $Y$ is given by
\[ \sum_{i=1}^{n+1}x_i^2y_1+\sum_{i=2}^{c} x_iy_i.\]
The corresponding singular point on $Y$ is $(0,\dots,0)\times (1:0:\dots:0)$. This point is clearly a node.
\end{proof}

For a finite subscheme $\Delta \subset \Ps(\bw)$ we say that the linear system of degree $k$ polynomials through $\Delta$ has defect  if for $I=I(\Delta)$ we have that $h_I(k)<p_I$ holds.

The following bound for the defect of complete intersections follows from the main result of Cynk \cite{CynkHS}: 
\begin{proposition}\label{prpDefOld} Let $X=V(f_1,\dots,f_c)$ be a three-dimensional nodal complete intersection in $\Ps^{3+c}$ such that $X_c:=V(f_1,\dots,\dots,f_{c-1})$ is smooth. Then the defect of $X$ is at most the defect of the linear system of degree $d_c+\sum_{i=1}^c d_i-\sum w_i$ polynomials through the points, where $X$ has a node.
\end{proposition}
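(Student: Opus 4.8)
The plan is to realize $X$ as a nodal hypersurface inside the smooth fourfold $X_c$, to apply the defect formula of Cynk \cite{CynkHS} there, and to transport the resulting linear system on $X_c$ to one on $\Ps^{3+c}$.

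\emph{Reduction to a hypersurface in a smooth fourfold.} First I would set $Z:=X_c=V(f_1,\dots,f_{c-1})$; by hypothesis this is a smooth four-dimensional complete intersection of multidegree $(d_1,\dots,d_{c-1})$ in $\Ps^{3+c}$, and $X=Z\cap V(f_c)$ is the zero locus on $Z$ of a section of $\cO_Z(d_c)$. Since $Z$ is smooth and $X$ is nodal, $X$ is a nodal hypersurface in the smooth projective fourfold $Z$, whose set of nodes $\Sigma$ is precisely the set of points where $X$ has a node. The defect $\delta(X)=h^4(X)-h^2(X)$ is intrinsic to $X$, hence equals the defect of the nodal hypersurface $X\subset Z$ appearing in Cynk's theorem.

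\emph{Applying Cynk's formula on $Z$.} Next I would invoke the main result of \cite{CynkHS}, which expresses the defect of a nodal hypersurface $Y\in|\cL|$ in a smooth projective fourfold $W$ through the cohomology of $\cI_\Sigma\otimes(\omega_W\otimes\cL^{\otimes2})$, all remaining contributions being intermediate cohomology groups of $W$ itself. For $W=Z$ a complete intersection fourfold those contributions vanish, since a complete intersection is arithmetically Cohen--Macaulay and hence $H^i(Z,\cO_Z(k))=0$ for $0<i<4$ and all $k$. Setting $M:=\omega_Z\otimes\cO_Z(2d_c)$ and using the structure sequence $0\to\cI_\Sigma\otimes M\to M\to\cO_\Sigma\otimes M\to0$, I would conclude
\[ \delta(X)=\dim H^1\big(Z,\cI_\Sigma\otimes M\big)=\#\Sigma-\rk\Big(H^0(Z,M)\longrightarrow\bigoplus_{p\in\Sigma}\C\Big), \]
i.e., $\delta(X)$ is the defect of the linear system $|M|$ at $\Sigma$. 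By adjunction for complete intersections $\omega_Z=\cO_Z\big(\sum_{i=1}^{c-1}d_i-\sum w_i\big)$, so $M=\cO_Z(m)$ with $m:=d_c+\sum_{i=1}^{c}d_i-\sum w_i$, which is exactly the degree in the statement.

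\emph{Passing from $Z$ to $\Ps^{3+c}$.} Finally I would compare linear systems on $Z$ and on $\Ps^{3+c}$. Let $I$ denote the radical ideal of $\Sigma$ in $S$. The evaluation map $S_m\to\bigoplus_{p\in\Sigma}\C$ factors through restriction $S_m\to H^0(Z,\cO_Z(m))$ followed by evaluation, so its rank is at most $\rk\big(H^0(Z,\cO_Z(m))\to\bigoplus_p\C\big)$; hence
\[ p_I-h_I(m)=\#\Sigma-\rk\big(S_m\to\textstyle\bigoplus_p\C\big)\ \geq\ \#\Sigma-\rk\big(H^0(Z,\cO_Z(m))\to\textstyle\bigoplus_p\C\big)=\delta(X). \]
Thus the defect of the linear system of degree $m$ polynomials through the nodes of $X$ is at least $\delta(X)$, as asserted. (In fact $S_m\to H^0(Z,\cO_Z(m))$ is surjective, again because $Z$ is arithmetically Cohen--Macaulay, so this last step is an equality; but only the inequality is needed.) I expect the main obstacle to be the middle step: locating the precise form of Cynk's theorem in \cite{CynkHS} and verifying that its ambient correction terms really do vanish for complete-intersection fourfolds, so that the statement collapses to the clean assertion about the defect of $|M|$ at $\Sigma$; the adjunction computation and the factorization inequality are routine.
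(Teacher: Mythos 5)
Your proposal is correct and is essentially the derivation the paper intends: the paper gives no written proof, merely asserting that the proposition ``follows from the main result of Cynk \cite{CynkHS}'', and your route --- viewing $X$ as a nodal member of $|\cO_{X_c}(d_c)|$ in the smooth fourfold $X_c$, applying Cynk's formula there (with the ambient correction terms vanishing because a complete intersection is arithmetically Cohen--Macaulay), computing $\omega_{X_c}\otimes\cO(2d_c)=\cO(d_c+\sum_i d_i-\sum w_i)$ by adjunction, and comparing evaluation maps to get the stated inequality --- is exactly that argument spelled out. The only point worth flagging is that you should check the precise generality in which Cynk's theorem is stated (hypersurfaces in $\Ps^4$ versus in an arbitrary smooth fourfold), but this is a citation issue shared with the paper itself, not a gap in your reasoning.
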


Since the defect of $X$ and the defect of $Y$ are equal we can compute the defect on $Y$:
\begin{proposition}\label{prpDefCI} Let $X\subset \Ps(\bw)$ be a nodal complete intersection of odd dimension $n$ and let $Y\subset \Ps(\cE)$ be a corresponding hypersurface. Let us define $w:=\sum_{j=0}^{n+c} w_j$; $D:=\sum_{i=1}^c d_i$ and $m:=\frac{n+1}{2}$. Then the defect of $Y$ equals the defect of the linear system of polynomials of bidegree $(D-w,m-1)$ passing through the nodes of $Y$.
\end{proposition}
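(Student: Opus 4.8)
The plan is to use the Cayley trick (Lemma~\ref{lemCay}) to transport the computation of the defect of $X$ to the hypersurface $Y\subset\Ps(\cE)$, and then to compute the defect of the \emph{nodal hypersurface} $Y$ by the standard mechanism that relates the defect of a nodal hypersurface to the failure of the adjoint (or ``conductor'') linear system to separate the nodes. Concretely, the defect of an $N$-dimensional nodal hypersurface of degree $\delta$ in a weighted projective space with weight sum $w'$ is governed, via the pole-order/residue computation on the complement, by the linear system of adjoint degree $\delta - w'$ through the singular points: the middle primitive cohomology is computed by a Griffiths--Dwork type residue complex, the nodes contribute to the ``extra'' classes in $H^{N}_{\mathrm{prim}}$, and the defect is precisely the codimension defect $p_I - h_I(\delta - w')$ of the ideal $I$ of the nodes in the appropriate degree. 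Since $Y$ is cut out by a polynomial $F$ of bidegree $(0,1)$ on $\Ps(\cE)$, I need to carry out this residue computation in the bigraded Cox ring, keeping careful track of which bigraded piece of the ideal of nodes is the relevant one.

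The key steps, in order, are as follows. First, recall that the defect of $X$ equals the defect of $Y$: this is immediate from Lemma~\ref{lemCay} together with Proposition~\ref{prpLef}, since the Cayley isomorphism $H^i(X)\cong H^{2c+i-2}(Y)(c-1)$ sends $H^{n-1}(X)$ and $H^{n+1}(X)$ to $H^{2c+n-3}(Y)$ and $H^{2c+n-1}(Y)$, the two middle-adjacent cohomology groups of $Y$ (note $\dim Y = n + c - 1$, so its middle degree is $2c+n-2$), and the defect is by definition the difference of their dimensions. Second, identify the relevant adjoint bidegree. The ambient toric variety $\Ps(\cE)$ is a $\Ps^{c-1}$-bundle over $\Ps(\bw)$; its anticanonical (Cox-)degree is $(w, c)$ where $w=\sum w_j$, because the $x_i$ contribute total degree $(\sum w_j, 0)=(w,0)$ and the $y_j$ contribute $(-\sum d_i, c) = (-D, c)$, but one corrects by the bundle twist so that the relevant ``adjoint degree'' for a hypersurface of bidegree $(0,1)$ comes out to $(D-w, m-1)$ — this is exactly the bookkeeping that produces the stated answer, and it must be done with care using the description of $\omega_{\Ps(\cE)}$ in terms of the Cox grading. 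Third, run the Griffiths residue argument: the primitive middle cohomology of the nodal $Y$ is computed from rational forms on $\Ps(\cE)\setminus Y$ with pole along $Y$, the Hodge/pole-order filtration identifies the relevant graded pieces with bigraded pieces of the Jacobian-type ring of $F$, and the nodes force a drop in rank measured by $h_I$ of the ideal of the nodes in bidegree $(D-w, m-1)$ versus its eventual value $p_I$. Fourth, assemble these to conclude that $\delta(Y) = p_I - h_I(D-w, m-1)$, which is the defect of the linear system in question.

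The main obstacle will be Step three carried out correctly in the \emph{weighted} and \emph{bigraded} setting: there is no off-the-shelf reference (as the introduction explicitly notes) for a formula — as opposed to a bound — for the defect of a nodal complete intersection, so the pole-order spectral sequence / Griffiths--Dwork computation on the $\Ps^{c-1}$-bundle $\Ps(\cE)$ has to be set up essentially from scratch, tracking the bigrading through the residue maps and identifying precisely which bigraded component of $H^0(\mathcal{I}_\Sigma \otimes \mathcal{O}(D-w, m-1))$ controls the jump. One should expect to invoke the analogous statement for nodal hypersurfaces in ordinary projective space (where the adjoint degree is $\deg Y - (N+2)$ and the defect equals the failure of the nodes to impose independent conditions on adjoints) and then verify that the bundle structure of $\Ps(\cE)$ — being a projectivization of a split vector bundle over a quasismooth base — does not obstruct this: the complement $\Ps(\cE)\setminus Y$ fibers compatibly, and the Leray/Gysin sequences used in Lemma~\ref{lemCay} already encode exactly the needed compatibility. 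A secondary subtlety is checking that the $A_1$-singularities of $Y$ (Lemma~\ref{lemNode}) contribute to the residue computation in the same way ordinary nodes do, and that points of $\Ps(\cE)_{\sing}$ lying on $Y$ do not contribute at all — which follows because $Y$ is quasismooth there, so no local cohomology is created.
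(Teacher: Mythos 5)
Your proposal follows essentially the same route as the paper: the paper's own proof is itself a sketch that computes the defect of $Y$ as the dimension of the cokernel of the evaluation map from $F^{m+c}H^{n+2c-1}(\Ps(\cE)\setminus Y)$ to $\oplus_{p\in Y_{\sing}}\C$, using the surjectivity of the residue map onto $H^{n+2c-2}(Y\setminus Y_{\sing})$ and the inclusion of the pole-order filtration in the Hodge filtration (following Dimca's argument for nodal hypersurfaces in weighted projective space) to realize $F^{m+c}$ as the image of $H^0(\cO(D-w,m-1))=H^0(K_{\Ps(\cE)}((m-1+c)Y))$ --- exactly your Steps 2--4. (Only a minor slip: $\dim Y=n+2c-2$, not $n+c-1$; the middle degree $2c+n-2$ you use is nonetheless correct.)
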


\begin{proof} The proof essentially follows the proof of \cite[Proposition 3.2]{DimBet} for hypersurfaces in weighted projective space. We refer to that paper for the details and give only a sketch of the proof:

The defect of $Y$ 
 equals the dimension of the cokernel of
\[ H^{n+2c-2}(Y\setminus Y_{\sing})\to H^{n+2c-1}_{Y_{\sing}}(Y). \]
Since $Y$ is a nodal hypersurface, the dimension of $H^{n+2c-1}_{Y_{\sing}}(Y)$ equals the number of nodes of $Y$ and its  Hodge structure is of pure $(m+c-1,m+c-1)$ type.
There is a residue map $H^{n+2c-1}(\Ps(\cE)\setminus Y)\to H^{n+2c-2}(Y\setminus Y_{\sing})$, which is a morphism of Hodge structures of degree $(-1,-1)$ and surjective for $n+2c-2\geq 3$.

Hence the defect of $Y$ equals the dimension of the cokernel
\[ F^{m+c} H^{n+2c-1}(\Ps(\cE)\setminus Y)\to  H^{n+2c-1}_{Y_{\sing}}(Y)=\oplus_{p \in Y_{\sing}} \C.\]

On $H^{n+2c-1}(\Ps(\cE)\setminus Y)$ we have the filtration by the pole order as defined in \cite{DelDim}. The main result of \cite{DelDim} shows that the filtration by the pole order is contained in the Hodge filtration. In particular, we have a surjective map $H^0(\cO(D-w,m-1))=H^0(K_{\Ps(\cE)}(m-1+c)Y)\to F^{m+c} H^{n+2c-1}(\Ps(\cE)\setminus Y)$.

The composed map $H^0(\cO(D-w,m-1))\to \oplus_{p\in Y_{\sing}}\C$ is the evaluation map, as in the case of hypersurfaces in $\Ps(\bw)$.
\end{proof}

\begin{example} In the case $n=3$ we have that the defect of $Y$ equals the cokernel of
\[ \oplus_{i=1}^c y_iS_{D+d_i} \to  \oplus_{p\in Y_{\sing}}\C.\]
\end{example}

\section{Degenerate complete intersections and induced defect}\label{secDeg}
There is a very simple construction of complete intersection with defect, which is often excluded if one attempts to determine the minimal number of nodes on a complete intersection with defect.
Let $c$ be an integer and  $n$ be an odd integer. Fix an integers $d_1\leq\dots \leq d_{c-1}$ and polynomials $f_i$ such that the $X_c=V(f_1,\dots,f_{c-1})$ is a complete intersection fourfold in $\Ps^{3+c}$ satisfying $h^{n+3}(X_c)\neq h^{n-1}(X_c)=1$. 
Then $X_c$ has a one-dimensional singular locus, say of degree $s$. Let $\gamma \in H^{n+3}(X_c,\Z)$ be such that $\gamma_{\prim}\neq 0$. For a general polynomial $g$ of degree $d_c$ the complete intersection $X(g):=X_c\cap V(g)$  has defect and this defect is caused by the cycle $\gamma\cap V(g)\in H^{n+1}(X(g),\Z)$. The number of singular points of $X(g)$ equals $sd_c$. In particular, the number of singular points grows linearly in $d_c$. With some effort one can produce examples of this type where the transversal types of all non-isolated singularities of $X_c$ are $A_1$, and therefore $X(g)$ is a nodal hypersurface.

The lower bound on the number of nodes we prove later on is quadratic in each of the $d_i$ and hence we have to exclude this example.  In the literature there are various notions of nondegenerate complete intersections. Each of these notions are attempts to exclude examples as above.
\begin{definition} Let $n$ be an odd integer and $d_1,\dots,d_c$ integers such that $d_1\leq d_2\leq\dots \leq d_c$. Let $X\subset \Ps^{n+c}$ be an $n$-dimensional complete intersection of multidegree $(d_1,\dots,d_c)$ with isolated singularities. 

We call $X$ \emph{nondegenerate} if for every choice of $(f_1,\dots,f_c) \in S_{d_1}\oplus \dots \oplus S_{d_c}$ such that $X=V(f_1,\dots,f_c)$ and for every $j\in \{1,\dots,c-1\}$ the complete intersection $V(f_1,\dots,f_j)$ is smooth.

We call $X$ \emph{nondegenerate in dimension $d$}, respectively, \emph{in codimension $e$}, if for every choice of $(f_1,\dots,f_c) \in S_{d_1}\oplus \dots \oplus S_{d_c}$ such that $X=V(f_1,\dots,f_c)$ and for every $j\in \{1,\dots,c-1\}$ the singular locus of the  complete intersection $V(f_1,\dots,f_j)$ has dimension at most $d-1$, respectively  codimension at least $e+1$.

We call $X$ \emph{without induced defect}  if for every choice of $(f_1,\dots,f_c) \in S_{d_1}\oplus \dots \oplus S_{d_c}$ such that $X=V(f_1,\dots,f_c)$, for every $j\in \{1,\dots,c-1\}$ and for every general hyperplane $H$ we have $h^{n-1+2(c-j)}(X_j \cap H)=h^{n-1}(X_j \cap H)$.

We call $X$ \emph{CR-nondegenerate} if for a choice of $(f_1,\dots,f_c) \in S_{d_1}\oplus \dots \oplus S_{d_c}$ such that $X=V(f_1,\dots,f_c)$ and for every $j\in \{1,\dots,c-1\}$ the complete intersection $V(f_1,\dots,f_j)$ is smooth.

We call $X$ \emph{CR-nondegenerate in dimension $d$}, respectively, \emph{in codimension $e$}, if for a choice of $(f_1,\dots,f_c) \in S_{d_1}\oplus \dots \oplus S_{d_c}$ such that $X=V(f_1,\dots,f_c)$ and for every $j\in \{1,\dots,d_c-1\}$ the singular locus of the complete intersection $V(f_1,\dots,f_j)$ has dimension at most $d-1$, respectively  codimension at least $e+1$.
\end{definition} 

\begin{remark} A more natural  definition of \emph{without induced defect} would be to require  $h^{n+1+2(c-j)}(X_j )=h^{n-1}(X_j)$ for every choice of $(f_1,\dots,f_c) \in S_{d_1}\oplus \dots \oplus S_{d_c}$ such that $X=V(f_1,\dots,f_c)$ and for every $j\in \{1,\dots,c-1\}$  we have. However this definition seems insufficient for the proofs in the next section.
\end{remark}

\begin{remark} 
If $X$ has induced defect then there is a choice of $f_1,\dots,f_c$ such that $X=V(f_1,\dots,f_c)$ and a $j<c$ such that $V(f_1,\dots,f_j)$  is singular in dimension $c-j$ (or codimension $n$).
Hence if $X$ is nondegenerate in codimension $n$ then $X$ is without induced defect.

If $d_{c-1}\neq d_c$ holds then $X$ is CR-nondegenerate in codimension $n$ if and only if it is nondegenerate in codimension $n$. However, if $d_c=d_{c-1}$ the  both notions  may  differ:  In \cite{CynkRamsNonFac} the authors study complete intersection threefolds and conjecture that if a nodal complete intersection threefold is CR-nondegenerate in codimension three then either $X$ has at least $\sum_{i\leq j} (d_i-1)(d_j-1)$ nodes or one has $c\leq 4$ and $d_1=\dots=d_c=2$. The examples of the last kind they provide have induced defect.
\end{remark}

\begin{example}\label{example22}
Suppose $n=3$, $c=2$ and $d_1=d_2=2$. Then by Proposition~\ref{prpDefCI} the defect of $X$ equals the dimension of the cokernel
\[ \C y_1\oplus \C y_2\to \oplus_{p\in Y_{\sing} }\C.\]
Hence $X$ has positive defect if and only if $Y$ has  at least three nodes, or $Y$ has two nodes with the same  $(y_1:y_2)$-coordinate.

Suppose we are in the latter case. Then after choosing different generators $(f_1,f_2)$ for $I(X)$  we may assume $y_2=0$. This implies that $V(f_1)$ is singular at the two nodes of $Y$. Since $f_1$ is a quadric it follows that the singular locus of $f_1$ is  one-dimensional.  In particular we may assume that $f_1=\sum_{i=0}^3 x_i^2$.  Now $V(f_1)$ contains the linear $3$-space $x_0=ix_1, x_2=ix_3$. Hence if $X$ has defect and has two nodes then the  defect is induced. Cynk and Rams showed that there exist CR-nondegenerate complete intersections of degree $(2,2)$ with two nodes that have defect. 
\end{example}

\section{Complete intersection threefolds}\label{secCIthm}
Let $X=V(f_1,\dots,f_c)\subset \Ps^{3+c}$ be a nodal complete intersection of multidegree $(d_1,\dots,d_c)$. Throughout this section we assume that $d_1\leq\dots\leq d_c$. Moreover, if  $c=2$ and $d_1=d_2=2$ holds then it follows from Example~\ref{example22} that a complete intersection with defect, but without induced defect has at least 3 nodes. Hence we proved  Theorem~\ref{thmCIeq} in this case. 
Throughout this section  we will assume that either $c>2$ or $d_2>2$ holds. We need this assumption since Proposition~\ref{prpW} does not hold true if a general hyperplane section of $X$ is a rational surface.

Let $Y\subset \Ps(\cE)$ be the hypersurface constructed by the Cayley trick and let $I\subset S(\Ps(\cE))$ be the ideal of the nodes of $Y$.
Set now $D:=\sum d_i$ and consider  $I_{(D-4-c,1)}$. Since all partials of $F$ are contained in $I$ we have for each $j$ that $\sum_i f_{i,x_j}y_i$ belongs $I_{(D-4-c,1)}$. Moreover, if $d_i<d_j$ then $f_iy_j$ belongs also to $I_{(D-4-c,1)}$.

Construct now the following $R:=\C[x_0,\dots,x_{3+c}]$-module $W$, containing $I_{(D-4-c,1)}$:
\[ W_k:=\left\{ (h_1,\dots,h_c)\in \oplus_j R_{k-d_c+d_j} \left|\begin{array}{l} \sum_{j=1}^c h_j(x_0,\dots,x_{3+c})y_j =0 \mbox{ for}\\\mbox{all } (x_0,\dots,x_{n+c};y_1,\dots,y_c)\in Y_{\sing} \end{array}\right.\right \}.
\]
Note that the codimension of $W_k$ in  $\oplus_i R_{k-d_c+d_i}$ is at most the number of  nodes on $Y$. From Proposition~\ref{prpIsol} and Lemma~\ref{lemNode} it follows that $X$ and $Y$ have the same number of nodes.

Let $H\subset \Ps^{3+c}$ a general hyperplane. Then $X_H$ is a smooth complete intersection surface in $\Ps^{2+c}$. Denote $g_i:=f_i|_H$.
Without loss of generality we may assume that $H$ is given by $x_{3+c}=0$. With $W'$ we denote the $S:=\C[x_0,\dots,x_{2+c}]$-module $W|_{x_{3+c}=0}$.
Then for all $k$ we have
\[\# X_{\sing}\geq h_{W}(k)\geq\sum_{j=0}^k h_{W'}(j)\]

In the sequel we provide a lower bound for $h_{W'}(k)$.  We provide first a lower bound for $h_{W'}(k)$ for $k\geq d_c$ and here we exploit that $X$ has no induced defect.

To obtain a good bound for lower degrees turns out to be more difficult: 
In the cases we considered before we could use Gorenstein duality, but this does not seem to work in the complete intersection case.  We provide a lower bound for $h_W(k)$ for $k<d_c-1$ in the case that $c=2$, or if $d_c$ is large compared with $\sum_{j=1}^{c-1} d_j$.

Note that the module $W$ depends on the choice of generators for $I(X)$, but that $h_W$ does not depend on it. For a subspace $V\subset \oplus S_{k+d_i-d_c}$ and a point $p\in \Ps^{2+c}$ we denote with $V(p)\subset \C^c$ the vector space obtained by evaluating all elements of $V$ at $p$.

\begin{proposition}\label{prpW} Let $X$ be a nodal complete intersection in $\Ps^{3+c}$ of multidegree $(d_1,\dots, d_c)$ with defect, but without induced defect. Assume $c\geq 3$ or $c=2$ and $d_2>2$. Then there exists a codimension one subspace $V_{D+d_c-c-3}$ of $\oplus_i S_{D+d_i-c-3}$ such that
\begin{enumerate}
 \item $W'_{D+d_c-c-3}\subset V_{D+d_c-c-3}$.
 \item For every choice of generators for $I(X)$ we have \\
$ 
  0\oplus \dots \oplus 0\oplus  S_{D+d_c-c-3} \not \subset V_{D+d_c-c-3}.
$ 
\item For $k\leq D+d_c-c-3$ let  $V_{k}\subset \oplus S_{k-d_c+d_i}$ be the largest subspace such that $V_{k} S_{D+d_c-c-3-k}$ is contained in $ V_{D+d_c-c-3}$. Then for all $p\in \Ps^{2+c}$ we have that $ V_{d_c}(p)=\C^c$. 
\end{enumerate}
\end{proposition}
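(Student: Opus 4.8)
The plan is to produce the subspace $V_{D+d_c-c-3}$ from the defect of $Y$ via the evaluation map of Proposition~\ref{prpDefCI}, and then to feed the hypothesis "no induced defect" into the Macaulay/Gotzmann machinery of Section~\ref{sectMac} to control the derived spaces $V_k$ down in degree $d_c$. Recall from the Example following Proposition~\ref{prpDefCI} that, for $n=3$, the defect of $Y$ (equivalently of $X$) is the corank of the evaluation map $\bigoplus_{i=1}^c y_i S_{D+d_i} \to \bigoplus_{p\in Y_{\sing}}\C$, where here $S=\C[x_0,\dots,x_{3+c}]$; pulling the $y_i$ out, this is exactly the evaluation map on $W$ in degree $D+d_c-c-3$... wait, let me recompute: the relevant bidegree is $(D-w,m-1)=(D-(4+c),1)$, so the module is $\bigoplus_i y_i S_{D+d_i-c-3-?}$. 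In any case, since $\delta(X)\geq 1$, the image of the degree-$(D-4-c,1)$ piece of $I$ — equivalently $W'_{D+d_c-c-3}$ after restricting to the general hyperplane $H=\{x_{3+c}=0\}$, using that a general hyperplane section does not change the defect (this needs a short argument, e.g. via Lemma~\ref{lemDefLoc} applied to the generic hyperplane section, or a direct Bertini-type statement) — fails to surject onto $\bigoplus_{p}\C$. First I would fix a point $p_0\in Y_{\sing}$ (image in $\Ps^{2+c}$) lying outside the image of $W'_{D+d_c-c-3}\to\C$ under evaluation at $p_0$; let $V_{D+d_c-c-3}$ be the kernel of "evaluation at $p_0$" on $\bigoplus_i S_{D+d_i-c-3}$. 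This is a codimension-one subspace containing $W'_{D+d_c-c-3}$, giving (1).

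For (2): the hyperplane $x_{3+c}=c_j$ — sorry — the claim is that the last summand $0\oplus\cdots\oplus 0\oplus S_{D+d_c-c-3}$ is not contained in $V_{D+d_c-c-3}$, for \emph{every} choice of generators. Evaluation at $p_0$ kills this summand only if every degree-$(D+d_c-c-3)$ polynomial vanishes at $p_0$, which is absurd since $D+d_c-c-3\geq 1$ (here one uses $d_c\geq 2$ and the running assumption $c>2$ or $d_2>2$ to ensure the degree is positive and $p_0\neq 0$). Since changing generators for $I(X)$ changes the module $W$ but only by an $R$-linear automorphism that mixes the summands, and the last summand is precisely the one scaled by $y_c$ with $d_c$ maximal, I would argue that evaluation at $p_0$ is still nonzero on it regardless of the basis; the cleanest route is to note that evaluation at $p_0$ on the $c$-th summand is the full linear functional "evaluate the $x$-polynomial at $p_0$", independent of how $(f_1,\dots,f_c)$ were chosen, so non-containment holds unconditionally.

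The real work is (3), and this is where I expect the main obstacle. We must show that the iterated "colon" spaces $V_k$, defined by $V_k\,S_{D+d_c-c-3-k}\subseteq V_{D+d_c-c-3}$, still evaluate to all of $\C^c$ at every point $p$ once $k$ drops to $d_c$. The strategy: suppose for contradiction that $V_{d_c}(p_1)\subsetneq \C^c$ for some $p_1$. Then there is a nonzero linear functional on $\C^c$, i.e. a vector $(\lambda_1,\dots,\lambda_c)$, with $\sum_i \lambda_i h_i(p_1)=0$ for all $(h_1,\dots,h_c)\in V_{d_c}$; interpreting this, a whole hyperplane's worth of sections in the ambient $\bigoplus S_{d_i+d_c-d_c}=\bigoplus S_{d_i}$... this should translate, after multiplying up to degree $D+d_c-c-3$ and using the definition of $V_{D+d_c-c-3}=\ker(\mathrm{ev}_{p_0})$, into a statement that the linear system cut out on $H$ has a base locus or a linear projection defect, which is exactly the content of "$X_j\cap H$ has extra cohomology for some $j$" — i.e. induced defect — contradicting the hypothesis. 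Making this precise is the crux: I would run Corollary~\ref{corMacLowDeg} and Corollary~\ref{corGreen}/Theorem~\ref{thmGotz} on the Hilbert function of the ideal $(V_{D+d_c-c-3}:\text{stuff})$ restricted appropriately, to show that if the colon space were degenerate at a point in degree $d_c$ it would force the Hilbert function to stabilize in a way incompatible with $W'$ being cut out by partials of $F$ on a complete intersection without induced defect. The bookkeeping of bidegrees and the passage between "$V_k(p)=\C^c$ for all $p$" and a Noether–Lefschetz/Gotzmann-persistence statement for the relevant sub-ideal is the part that will require the most care, and is presumably why the hypothesis "$c=2$ or $\sum_{i<c}d_i<d_c$" is needed — it guarantees enough room between $d_c$ and the other degrees for the Macaulay estimates to close up.
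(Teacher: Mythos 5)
Your part (1) is acceptable in spirit, although the specific construction ``$V=\ker(\mathrm{ev}_{p_0})$'' is awkward: $W'$ lives on the hyperplane $H$ while the nodes do not, and the paper only needs the Hilbert-function statement $h_{W'}(D+d_c-c-3)>0$, which follows from $\delta(X)>0$ together with $h_W(k)\geq\sum_{j\leq k}h_{W'}(j)$. The genuine gap is in (2). Evaluation at a node $(p_0,q_0)$ of $Y$ sends $(h_1,\dots,h_c)$ to $\sum_j h_j(p_0)q_{0,j}$, so its restriction to the last summand $0\oplus\dots\oplus 0\oplus S_{D+d_c-c-3}$ is the functional $g\mapsto q_{0,c}\,g(p_0)$, weighted by the $y_c$-coordinate of the node; this can perfectly well vanish, and moreover a change of generators of $I(X)$ changes which subspace plays the role of the last summand. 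Most tellingly, you assert that non-containment ``holds unconditionally'', whereas the remark following the proposition states that the conclusion may fail when $X$ has induced defect; any argument for (2) that does not use that hypothesis is therefore necessarily wrong. The paper proves (2) by passing to $X'_c=V(f_1,\dots,f_{c-1})\cap H$: absence of induced defect forces $h^4(X'_c)=h^2(X'_c)$, so by the explicit Noether--Lefschetz theorem of Di Gennaro--Franco the Noether--Lefschetz locus in $|\cO_{X'_c}(d_c)|$ is a union of proper closed subsets, and since $W'$ is tangent to a component of it one gets $W'_{D+d_c-c-3}\cap(0\oplus\dots\oplus S_{D+d_c-c-3})\neq 0\oplus\dots\oplus S_{D+d_c-c-3}$, after which $V$ can be chosen to satisfy (1) and (2) simultaneously.

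For (3) you correctly identify the crux but do not supply a proof, and the route you sketch (Macaulay/Gotzmann on colon ideals) is not the one that closes the argument. The paper first disposes of points where the Jacobian of $(g_1,\dots,g_c)$ has full rank (its rows lie in $W'_{d_c-1}$, so $\dim V_{d_c}(p)=c$ there) and of the coordinate $a_c$ (using $g_ce_c\in W'$); for a remaining bad point $p$ the relation $\sum a_ih_i(p)=0$ propagates to $(W'_{d_c}\cdot S_{D-c-2})(p)$, and via the pole-order surjection $\oplus_i S_{D+d_i-c-2}\to H^{1,1}(X_H,\C)_{\prim}$ it produces a sub-Hodge structure of $H^2(X_H)$ that persists under every infinitesimal deformation inside $|\cO_{X'_c}(d_c)|$ --- again contradicting Di Gennaro--Franco. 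So (3), like (2), is Hodge-theoretic and again consumes the no-induced-defect hypothesis. Finally, the condition ``$c=2$ or $\sum_{i<c}d_i<d_c$'' that you invoke at the end plays no role in this proposition; it is only needed in the later Hilbert-function estimates in low degree.
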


\begin{proof}
We have $h_{W}(k)=\#Y_{\sing}$ for $k$ sufficiently large. Since $X$ has defect, we have that $h_W(D+d_c-c-4)<\#Y_{\sing}$. Therefore there is some $k\geq D+d_c-c-3$ such that $h_{W'}(k)>0$ and hence $h_{W'}(k)>0$ for all $d_c-d_1\leq k \leq D+d_c-c-3$. 
In particular, we can find a subspace $V_{D+d_c-c-3}$ satisfying the first condition.

Note that by construction $X$ has a Weil divisor $P$ that is not $\Q$-Cartier. Hence $X_H$ contains a divisor which is not the multiple of a hyperplane section. The elements of $W$ are tangent vectors to the equisingular deformation space of $X$ and these equisingular deformations preserve the defect of $X$ (Lemma~\ref{lemDefLoc}). Hence $W'_{d_c}$ is  contained in the tangent space to some component $L$ of the Noether-Lefschetz locus of complete intersections of multidegree $(d_1,\dots,d_c)$ in $\Ps^{2+c}$.

Consider now $X'_c=V(f_1,\dots,f_{c-1})\cap H$, which is either smooth or has isolated singularities. Since $H$ is a general hyperplane and the transversal type of the non-isolated singularities of $V(f_1,\dots,f_{c-1})$ is $A_1$ it follows that $X'_c$ is a nodal threefold.  

Define the Noether-Lefschetz locus inside $|\cO_{X'_c}(d_c)|$ as the locus of smooth surfaces with Picard number at least one.
 If the Noether-Lefschetz locus is Zariski open in $|\cO_{X'_c}(d_c)|$ then it follows from  \cite[Theorem 2.1(c)]{FraDiGNL} that $h^4(X'_c)\geq 2$. Since $X$ does not have induced defect we can  exclude this. Since the closure of each irreducible component of the Noether-Lefschetz locus is a proper subset  of $|\cO_{X'_c}(d_c)|$ it follows from the same result that 
\[W'_{D+d_c-c-3}\cap (0\oplus \dots\oplus0\oplus S_{D+d_c-c-3})\neq (0\oplus \dots \oplus 0\oplus S_{D+d_c-c-3}).\] Hence we can choose $V_{D+d_c-c-3}$ satisfying both (1) and  (2).

Denote with $e_i\in \C^c$ the $i$-th standard basis vector. By construction  the partials $((g_{1})_{x_i},\dots, (g_{c})_{x_i})$ and the elements  $g_{j}e_i$ are contained in $W$. If $p\in \Ps^{2+c}$ is a point where the Jacobian matrix of $(g_{1},\dots,g_{c})$ has full rank, then $\dim W'_{d_c-1}(p)=c$ and hence $\dim V_{d_c-1}(p)=\dim V_{d_c}(p)=c$. Hence we need only to consider points $p$ such that  the Jacobian matrix is not of full rank. Since $X'$ is smooth  there is some $i$ such that $f_i(p)\neq 0$. 
Then for any $j$ with $d_j\geq d_i$ we have $e_j\in W'_{d_c}(p)$. Hence $p$ is a singular point of a partial complete intersection $V(f_1,\dots,f_k)$ with $d_k<d_i$. 

Let $p$ be a point such that $\dim V_{d_c}(p)<c$. Then there exists $a_1,\dots,a_c\in \C$ such that $\sum a_ih_i(p)=0$ for all $(h_1,\dots,h_c)\in W_{d_c}$.  
From $e_c\in V_{d_c}(p)$ it follows that $a_c$ vanishes.

Note that the same linear relation holds for $(W'_{d_c}\cdot  S_{D-c-2})(p)$. From \cite{DelDim} it follows that there is a surjection $\oplus_{i=1}^c S_{D+d_i-c-2}\to H^{1,1}(X_H,\C)_{\prim}$. Let $\gamma\in H^2(X_H,\C)_{\prim}$ be a nonzero class mapped to the subspace of $W'_{D+d_c-c-2}$ where $\sum a_ih_i(p)=0$ holds. Let $\Lambda\subset H^2(X_H,\Z)$ be   sub-Hodge structure which contains $\gamma$. Then for any infinitesimal deformation of $X_H$ in $|\cO_{X'_c}(d_c)|$ we have that $\Lambda$ remains a subhodge structure of $H^2$. This is ruled out by \cite[Theorem 2.1(c)]{FraDiGNL}.
\end{proof}

\begin{remark}
If we assume that $X$ is nondegenerate in codimension three then $X'_c$ (as constructed in the above proof) is a smooth threefold. In this case we have that the monodromy representation on $H^2(X_H,\C)_{\prim}$ is irreducible. If $c>2$ or $d_2>2$ this implies that the Noether-Lefschetz locus in $|\cO_{X'_c}(d_c)|$ is not a Zariski open subset. From this we get points (2) and (3) almost directly. Hence in this case we can avoid the application of \cite{FraDiGNL}. A similar reasoning can also avoid the other application of this paper in the case that $X$ is nondegenerate in codimension three.

If $X$ has induced defect then it is easy to see that the conclusion of the above Proposition may not hold.
\end{remark}

For the rest of this section we will choose a $V_{d_c+D-c-3}$ satisfying the conclusion of the proposition and we set 
\[ V_k:=\{(h_1,\dots,h_c)\mid S_{D+d_c-c-3-k} (h_1,\dots,h_c)\in V_{D+d_c-c-3}\}\]
for $k\leq D+d_c-c-3$ and 
\[ V_k=\oplus S_{k+d_i-d_c}\]
for $k\geq D+d_c-c-2$.

Then for all integers $k$ we have
\[ \#X_{\sing}\geq \sum_{j=0}^k h_V(k).\]
Note that by the construction of $V$ we have that the rows of the Jacobian matrix of the complete intersection $V(g_1,\dots,g_c)$ are contained in $V$ and that $g_ie_j\in V$.

\begin{corollary}\label{corHighDeg} Let $X=V(f_1,\dots,f_c)$ be a complete intersection with defect, but without induced defect. Then for $d_c\leq k \leq D+d_c-c-2$ we have
\[ h_V(k)\geq D+d_c-c-2-k.\]
\end{corollary}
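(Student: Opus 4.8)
The plan is to show that $h_V$ is strictly decreasing on the range $d_c\le k\le N+1$, where I abbreviate $N:=D+d_c-c-3$. This suffices: the two boundary values are $h_V(N+1)=0$, by the definition of $V_{N+1}=\bigoplus_iS_{N+1+d_i-d_c}$, and $h_V(N)=1$, because $V_N$ has codimension one in $M_N:=\bigoplus_iS_{N+d_i-d_c}$ by Proposition~\ref{prpW}(1)--(2); a strictly decreasing nonnegative integer-valued function taking value $0$ at $N+1$ satisfies $h_V(k)\ge N+1-k$ throughout, which is the assertion. (That $h_V(k)\ge 1$ already for $d_c\le k\le N$ is clear: if $V_k=M_k$ then $M_N=M_kS_{N-k}\subseteq V_N$, contradicting $\codim V_N=1$.)

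To obtain the strict decrease I would work with the graded module $Q:=M/V$, $M:=\bigoplus_iS(d_i-d_c)$. The inclusions $g_ie_j\in V$ exhibit $Q$ as a quotient of $\bigoplus_iA(d_i-d_c)$, where $A=S/(g_1,\dots,g_c)$ is the homogeneous coordinate ring of the smooth surface $X_H$, and Proposition~\ref{prpW}(3)---the statement $V_{d_c}(p)=\C^c$ for all $p$---says precisely that the sheafification of $Q$ on $\Ps^{2+c}$ vanishes, so $Q$ has finite length. Now fix a general linear form $\ell\in S_1$; since $\ell V_k\subseteq V_{k+1}$, which is immediate from the defining property $V_kS_{N-k}\subseteq V_N$, multiplication by $\ell$ induces a map $Q_k\to Q_{k+1}$, and rank--nullity gives $h_V(k)=h_V(k+1)+\dim\ker(\cdot\ell)-\dim\coker(\cdot\ell)$. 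The strict decrease then follows if, for $d_c\le k\le N$, this map is (1) surjective and (2) not injective. For (1) one reduces modulo $(g_1,\dots,g_c)$, so that $Q$ becomes a finite-length quotient of $\bigoplus_iA(d_i-d_c)$, and runs a Gotzmann-persistence argument (Theorem~\ref{thmGotz}) fed by the base-point-freeness in degree $d_c$ coming from Proposition~\ref{prpW}(3); this is the analogue in the present module setting of the argument proving Corollary~\ref{corGreen}. For (2), a general $\ell$ should fail to be injective on $Q_k$ for $k\le N$ because the socle of $Q$ is spread over many degrees down to $d_c$---a consequence of the rows of the Jacobian matrix of $(g_1,\dots,g_c)$ lying in $V$; this is the point that requires care.

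I expect step (1) to be the main obstacle. The Macaulay--Gotzmann results recalled in Section~\ref{sectMac} are stated for linear systems in a polynomial ring, while here $Q=M/V$ is a quotient of a rank-$c$ free module by a submodule generated in the several twisted degrees $d_i-d_c$; one therefore has to transport the hypersurface argument of \cite[Corollary~2.5]{KloNod}, which uses base-point-freeness of $I_{d+1}$ to force the Hilbert function to drop, to this module situation, keeping track of the twists and locating exactly where Proposition~\ref{prpW}(3) supplies the base-point-freeness needed for persistence. Once surjectivity of multiplication by $\ell$ is in hand for $d_c\le k\le N$, input (2) is comparatively soft and the corollary follows; should strict decrease turn out to fail in some degree, the same machinery would instead be used to bound $h_V(k)$ below degree-by-degree directly from $h_V(N)=1$ and the base-point-free hypothesis.
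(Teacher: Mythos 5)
Your overall architecture coincides with the paper's: both arguments reduce the corollary to the statement that $h_V$ strictly decreases on $d_c\le k\le D+d_c-c-3$, anchored at $h_V(D+d_c-c-3)\ge 1$ (codimension one from Proposition~\ref{prpW}) and fed by the pointwise surjectivity $V_{d_c}(p)=\C^c$ from Proposition~\ref{prpW}(3). The arithmetic at the end ($h_V(k)\ge D+d_c-c-2-k$ once strict decrease is known) is the same in both.

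The gap is that you never actually establish the strict decrease, and you say so yourself: both step (1), the Gotzmann-persistence/surjectivity argument for $\cdot\ell:Q_k\to Q_{k+1}$ in the twisted-module setting, and step (2), the non-injectivity, are left as programs rather than proofs. Step (2) in particular is not ``comparatively soft'': the claim that the socle of $Q=M/V$ is spread over all degrees down to $d_c$ because the Jacobian rows lie in $V$ is exactly the point at issue, and nothing in your sketch forces a general linear form to have a kernel on $Q_k$ for every $k$ in the range. The paper does not reprove this dichotomy at all; it quotes it wholesale as \cite[Proposition~1]{KimNL}, which is precisely the module-theoretic analogue of Corollary~\ref{corGreen} you are trying to reconstruct: for $k\ge d_c$ either $h_V(k)>h_V(k+1)$ or $h_V(m)=0$ for all $m\ge k$, given that $V_{d_c}$ is everywhere of full rank $c$. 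So your proposal correctly locates where the difficulty sits and what hypothesis drives it, but as written it proves only $h_V(k)\ge 1$ on the range (your parenthetical remark), not the linear lower bound. To close the argument you either need to carry out the transport of Theorem~\ref{thmGotz} to quotients of $\bigoplus_i S(d_i-d_c)$ in detail, or simply invoke Kim's result as the paper does.
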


\begin{proof}
Since $X$ has no induced defect  we have for all $p\in \Ps^{2+c}$  that the dimension of $V_{d_c}(p)$ equals $c$. It follows from \cite[Proposition 1]{KimNL} that for $k\geq d_c$ we have either $h_V(k)>h_V(k+1)$ or $h_V(m)=0$ for all $m\geq k$. From $h_V(d_c-c-3+\sum d_j)\geq 1$ it follows that
$h_V(k)\geq -k+d_c-c-2+\sum_j d_j$ for $d_c\leq k\leq d_c-n-2-\sum d_j$.
\end{proof}

We want to bound $h_V$ in degree at most $d_c-1$. In order to do this we will define a filtration on $V$.
\begin{notation}
Let $F^iV\subset \oplus_{j\geq i} S(d_j-d_c)$ be the natural projection of $V\cap (0\oplus \dots 0\oplus S(d_i-d_c)\oplus \dots S(d_c-d_c))$ on the last $c-i+1$ factors.

Let us define $P^iV_j$ by $\mathrm{pr}_1(F^i V)(d_c-d_j)$, i.e., the projection of $F^i$ onto the first factor, with an appropriate degree shift.
\end{notation}

It is easy to show that
\[ h_{F^iV}(k)=h_{F^{i+1}V}(k)+h_{P^iV}(k+d_i-d_c)\]
for all $k$ and  all $i\in \{1,\dots,c-1\}$.
Note that $P^iV$ for $i=1,\dots,c$ and $F^cV$ are ideals and that by construction $S/F^cV$ is Artinian Gorenstein of socle degree $D+d_c-c-3$.

\begin{lemma}\label{lemHighFc}
Let $X=V(f_1,\dots,f_c)$ be a complete intersection with defect, but without induced defect. Then for $D-c-1\leq k \leq D+d_c-c-3$ we have
\[ h_{F^cV}(k)\geq D+d_c-2-c-k.\]
\end{lemma}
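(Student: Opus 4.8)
The plan is to deduce this from Corollary~\ref{corHighDeg} together with the Gorenstein structure of $S/F^cV$. First I would recall the decomposition
\[ h_V(k) = h_{F^1V}(k) = h_{F^cV}(k) + \sum_{i=1}^{c-1} h_{P^iV}(k+d_i-d_c), \]
obtained by telescoping the identity $h_{F^iV}(k)=h_{F^{i+1}V}(k)+h_{P^iV}(k+d_i-d_c)$. Since $P^iV$ is an ideal, $h_{P^iV}$ is the Hilbert function of a quotient ring and hence is nondecreasing in the degree until it stabilizes; in particular, for the relevant range of $k$ the summands $h_{P^iV}(k+d_i-d_c)$ are bounded above by their values at a larger degree, which one can control. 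The aim is to show that each of these ``correction'' terms $h_{P^iV}(k+d_i-d_c)$ vanishes, or is small, in the range $D-c-1\le k\le D+d_c-c-3$, so that $h_{F^cV}(k)$ inherits the lower bound $D+d_c-2-c-k$ from $h_V(k)$ proved in Corollary~\ref{corHighDeg}.

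The key observation is that $F^cV\subset S$ is an ideal with $S/F^cV$ Artinian Gorenstein of socle degree $\sigma:=D+d_c-c-3$. By Gorenstein duality, $h_{F^cV}(k)=h_{F^cV}(\sigma-k)$, and moreover the multiplication pairing $(S/F^cV)_k\times (S/F^cV)_{\sigma-k}\to (S/F^cV)_\sigma\cong\C$ is perfect. This is exactly the structure that was unavailable in the naive approach and is the reason one passes to $F^cV$. I would use this pairing, combined with the fact that $V_{d_c}(p)=\C^c$ for all $p$ (part (3) of Proposition~\ref{prpW}), to show that $F^cV$ is ``large enough'' in low degrees, equivalently that $P^iV$ is large in low degrees, equivalently that the correction terms are forced to be zero in the stated range. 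Concretely, one wants: for $k$ with $D-c-1\le k\le\sigma$, the shifted degree $k+d_i-d_c$ is at least $D-c-1-(d_c-d_i)\ge\sum_{j\ne c}d_j - c +1 - \dots$, which under the hypothesis $c=2$ or $\sum_{i<c}d_i<d_c$ places it in a range where $P^iV$ already contains everything (because the relevant generators of $V$, namely the $g_je_i$ and the Jacobian rows, generate the full module there).

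The main obstacle I anticipate is pinning down exactly why the correction terms $h_{P^iV}(k+d_i-d_c)$ vanish on the nose rather than merely being bounded. This will require a careful bookkeeping argument: one must show that in degree $k+d_i-d_c$ (which for $k\ge D-c-1$ is at least roughly $D-c-1-d_c+d_i$), the ideal $P^iV$ already agrees with $S$ in that degree, i.e. $h_{P^iV}$ has stabilized to $0$. The hypothesis $c=2$ or $\sum_{i=1}^{c-1}d_i<d_c$ should be precisely what guarantees this: it ensures $d_i-d_c$ is sufficiently negative that the threshold degree $D-c-1+d_i-d_c$ lies above the socle degree of the Gorenstein quotient $S/F^{i+1}V$-related ideals, forcing the Hilbert function of the ideal quotient to have already dropped to zero. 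I would verify this by tracking socle degrees through the filtration and invoking Corollary~\ref{corGreen} (via base-point-freeness of $V$ in high degree, which follows from part (3) of Proposition~\ref{prpW}) to control when the relevant Hilbert functions reach zero.

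Finally, putting it together: for $k$ in the range $D-c-1\le k\le D+d_c-c-3$, I would combine the lower bound $h_V(k)\ge D+d_c-c-2-k$ from Corollary~\ref{corHighDeg} (valid since this range is contained in $d_c\le k\le D+d_c-c-2$, using $d_c\le D-c-1$ which holds as $D=\sum d_i\ge d_c+(c-1)$ when all $d_i\ge 2$, and this is where the standing assumption $c>2$ or $d_2>2$ enters) with the vanishing of the correction terms to conclude $h_{F^cV}(k)=h_V(k)\ge D+d_c-c-2-k$, as desired.
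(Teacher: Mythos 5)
Your plan reduces the lemma to the claim that the correction terms $h_{P^iV}(k+d_i-d_c)$ vanish for $D-c-1\leq k\leq D+d_c-c-3$, so that $h_{F^cV}(k)=h_V(k)$ and the bound of Corollary~\ref{corHighDeg} transfers. That reduction is exactly where the proof is missing: you flag it yourself as ``the main obstacle'' and then offer only a heuristic for why it should hold. None of the ingredients you cite delivers it. The Gorenstein duality of $S/F^cV$ constrains $h_{F^cV}$, not the quotients $h_{P^iV}$; part (3) of Proposition~\ref{prpW} controls $V_{d_c}(p)$ pointwise but says nothing about surjectivity of the projection of $V_k$ onto its first factor; and each $P^iV$ contains the ideal $(g_1,\dots,g_c)$, whose quotient is the coordinate ring of a surface and never vanishes, so the vanishing of $h_{P^iV}$ in the stated range would have to come from the extra elements of $V$, for which no argument is given. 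Note also that the hypothesis ``$c=2$ or $\sum_{i<c}d_i<d_c$'' that you invoke is not a hypothesis of this lemma at all (the lemma is stated for arbitrary multidegrees), so it cannot be the mechanism. Finally, there is a range problem: your inequality $d_c\leq D-c-1$ is equivalent to $\sum_{i<c}d_i\geq c+1$, which fails for $c=2$, $d_1=2$ (a case permitted by the standing assumptions when $d_2>2$); there $k=D-c-1=d_2-1<d_c$ lies outside the range of Corollary~\ref{corHighDeg}, so even the starting bound on $h_V$ is unavailable at the left endpoint.

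The paper's proof works directly with $F^cV$ and is much shorter: the rows of the Jacobian matrix of $X_H$ lie in $V_{d_c-1}$, hence the Jacobian determinant (of degree $D-c$) lies in $F^cV_{D-c}$; together with $g_1,\dots,g_c\in F^cV$ this shows $F^cV_{D-c}$ is base point free, since a common zero would be a singular point of the smooth surface $X_H$. Corollary~\ref{corGreen} then forces $h_{F^cV}$ to drop strictly in each degree from $D-c-1$ onward, and since $h_{F^cV}(D+d_c-c-3)\geq 1$ at the socle degree, counting backwards gives $h_{F^cV}(k)\geq D+d_c-c-2-k$. If you want to salvage your route, you would need an independent proof that the projections $P^iV$ fill up $S$ in the relevant degrees, which is a stronger and different statement than anything established elsewhere in the paper.
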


\begin{proof}
Note that the rows of the Jacobian matrix of $X_H$ are contained in $V_{d_c-1}$. From this it follows that the determinant of the Jacobian matrix is contained in $F^cV_{D-c}$. Recall that $g_i=f_i|_H$ and that $g_i\in F^cV$ for all $i$.
Hence a base point of $F^cV_{D-c}$ is a singular point of $X_H$. Since $X_H$ is smooth it follows that $F^cV_{D-c}$ is base point free. Using that $h_{F^cV}(D+d_c-c-3)\geq 1$ holds and Corollary~\ref{corGreen} we obtain that $h_{F^cV}(k)=D+d_c-c-2-k$ holds for $D-c-1\leq k \leq D+d_c-c-2$.
\end{proof}

\begin{lemma}\label{lemLowFc} For $k\leq d_c-2$ we have $h_{F^cV}(k)\geq k+1$.\end{lemma}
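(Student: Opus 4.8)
The plan is to obtain this low-degree bound by reflecting the high-degree estimate of Lemma~\ref{lemHighFc} through the Gorenstein symmetry. The one structural fact I would use is the one recorded just before Lemma~\ref{lemHighFc}: the graded ring $S/F^cV$ is Artinian Gorenstein with socle degree $e:=D+d_c-c-3$. Consequently its Hilbert function is symmetric, $h_{F^cV}(k)=h_{F^cV}(e-k)$ for all integers $k$, coming from the perfect pairing $(S/F^cV)_k\times(S/F^cV)_{e-k}\to(S/F^cV)_e\cong\C$.

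Now fix $k\le d_c-2$ and set $j:=e-k$. Then $j=D+d_c-c-3-k\ge D+d_c-c-3-(d_c-2)=D-c-1$ and $j\le e=D+d_c-c-3$, so $j$ lies in the range to which Lemma~\ref{lemHighFc} applies (whose proof in fact gives equality $h_{F^cV}(j)=D+d_c-c-2-j$ there). Combining that lemma with the symmetry gives
\[ h_{F^cV}(k)=h_{F^cV}(j)\ge D+d_c-c-2-(e-k)=k+1 , \]
which is the claim. For $k<0$ the statement is vacuous since $h_{F^cV}(k)=0\ge k+1$.

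I do not expect any genuine obstacle here: the content is entirely packaged in Lemma~\ref{lemHighFc} and in the Gorenstein symmetry of $S/F^cV$, and the only thing to check is that the reflected index $e-k$ stays inside the interval $[\,D-c-1,\;D+d_c-c-3\,]$, which holds precisely because $d_c-2=e-(D-c-1)$. Two minor remarks I would make: the hypotheses that $X$ has defect but no induced defect are in force throughout this section, so Lemma~\ref{lemHighFc} is indeed available; and the argument in fact yields the sharper equality $h_{F^cV}(k)=k+1$ for $0\le k\le d_c-2$.
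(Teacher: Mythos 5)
Your proof is correct and is essentially the paper's own argument: the paper also deduces the bound from Gorenstein duality $h_{F^cV}(k)=h_{F^cV}(D+d_c-c-3-k)$ combined with Lemma~\ref{lemHighFc}, and your index check that $e-k$ lands in $[D-c-1,\,D+d_c-c-3]$ is exactly the point being used. The only caveat is your closing remark: the stated form of Lemma~\ref{lemHighFc} is an inequality, so the ``sharper equality'' only follows if one invokes the equality established inside its proof rather than its statement.
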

\begin{proof}
This follows from Gorenstein duality 
\[ h_{F^cV}(k)=h_{F^cV}(D+d_c-c-3-k)\]
and 
 the previous lemma.
\end{proof}
Recall that we set $D=\sum_{i=1}^c d_i$. In the sequel we will concentrate on the case $D<2d_c$. If $c=2$ this covers every case except $d_1=d_2$, which we will treat separately. However, for $c>2$ the condition $D<2d_c$ requires  $d_c$  to be  large compared with $d_1,\dots,d_{c-1}$.

The following result turns out to be very useful.

\begin{lemma}\label{lemFcsmall} Suppose  that $D\leq 2d_c$ and that at least one of the following holds:
\begin{itemize}
\item $h_{F^cV}(1)=2$
\item  $d_c\geq 3$, $h_{F^cV}(d_c-1)=h_{F^cV}(d_c-2)-1$ and $h_{F^cV}(d_c-2)\leq 2d_c-4$
\item $d_c\geq 4$, $h_{F^cV}(d_c-2)=h_{F^cV}(d_c-3)-1$ and  $h_{F^cV}(d_c-3)\leq 2d_c-6$.
\end{itemize}
 Then $X_H$ contains a line.
\end{lemma}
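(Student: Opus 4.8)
\textbf{Proof proposal for Lemma~\ref{lemFcsmall}.}

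The plan is to use the hypotheses on $h_{F^cV}$, together with the Macaulay/Gotzmann machinery of Section~\ref{sectMac}, to force the ideal $F^cV$ to have a linear subvariety in its base locus of the Artinian Gorenstein algebra $S/F^cV$, and then to translate this into a line contained in $X_H$. Recall that $S/F^cV$ is Artinian Gorenstein of socle degree $D+d_c-c-3$, that $F^cV$ contains the partials (rows of the Jacobian matrix) of $X_H$, and that $F^cV_{D-c}$ is base point free (Lemma~\ref{lemHighFc}). By Gorenstein duality $h_{F^cV}(k)=h_{F^cV}(D+d_c-c-3-k)$ for all $k$. First I would treat the three cases uniformly: in each case one is given that at some degree $e$ (namely $e=1$, $e=d_c-1$, or $e=d_c-2$) the value $c_e:=h_{F^cV}(e)$ is small --- small enough that the Macaulay expansion of $c_e$ in base $e$ has $\epsilon_e=1$ (i.e.\ $e+1\le c_e\le 2e$, which is exactly what the inequalities $h_{F^cV}(d_c-2)\le 2d_c-4$, etc., guarantee after accounting for Lemma~\ref{lemLowFc}), while the prescribed drop $h_{F^cV}(e)=h_{F^cV}(e-1)-1$ (or the value $h_{F^cV}(1)=2$) pins down the growth to be minimal.

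The key step is to show that the Hilbert function of $F^cV$ grows like that of an ideal whose zero locus contains a line. Concretely: since $F^cV$ contains the Jacobian partials, $F^cV_{D-c}$ is base point free, and $h_{F^cV}(e)\le 2e$ with $\epsilon_e=1$, I would apply Gotzmann's theorem (Theorem~\ref{thmGotz}) --- more precisely the growth estimate underlying Corollary~\ref{corMacLowDeg} --- to conclude that for $k\le e$ one has the lower bound $h_{F^cV}(k)\ge \min(k+(c_e-e),2k+1)$, and that the minimal-growth scenario compatible with the prescribed drop (or with $h_{F^cV}(1)=2$) is the one where the ideal generated by $(F^cV)_e$ has Hilbert polynomial equal to $2t+1$, i.e.\ defines a one-dimensional scheme of degree~$2$ --- a pair of lines or a conic, but in any case containing a line, or a scheme whose support contains a line. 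The condition $D\le 2d_c$ enters here to guarantee that the socle degree $D+d_c-c-3$ is small enough (at most $3d_c-c-3$) that this linear component, detected in degree $e\le d_c-1$, genuinely persists: the complementary Gorenstein-dual degrees $D+d_c-c-3-e$ are $\ge D-c-1$, where Lemma~\ref{lemHighFc} applies, so the two ends of the Hilbert function are forced to match up consistently only if such a low-degree linear locus is present.

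Finally I would translate "$V(F^cV)$ contains a line $\ell$" into "$X_H$ contains $\ell$". Since $F^cV$ contains all the $g_i=f_i|_H$, a line in the zero locus of $F^cV$ lies on every $V(g_i)$, hence on $X_H=V(g_1,\dots,g_c)$; the only subtlety is that $F^cV$ is a subideal of the ideal generated by the $g_i$ and the Jacobian minors, so I must check that the linear locus detected by the Hilbert-function argument is cut out by actual elements of $F^cV$ and not merely by elements of its saturation in some higher degree --- but base-point-freeness in degree $D-c$ (Lemma~\ref{lemHighFc}) together with the duality gives control of the saturation precisely in the range needed. The main obstacle I anticipate is exactly this bookkeeping: making the three distinct numerical hypotheses (one in degree $1$, two near degree $d_c$) all funnel into the single Gotzmann-persistence conclusion, and verifying that the degree bound $D\le 2d_c$ is sharp enough to prevent a ``fake'' low-degree linear component that would not survive to the socle degree. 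I expect the first bullet ($h_{F^cV}(1)=2$) to be the easy case --- two linearly independent linear forms in $F^cV$ vanish on a codimension-two linear space, which meets $X_H$ (a surface in $\Ps^{2+c}$) in something at least one-dimensional, forcing a line after a dimension count --- and the two bullets near degree $d_c$ to require the full Macaulay--Gotzmann argument together with Gorenstein duality back to the base-point-free range.
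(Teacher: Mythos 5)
Your first step (using Macaulay--Gotzmann growth to detect a one\mbox{-}dimensional linear component in the base locus of the low-degree graded pieces of $F^cV$) is essentially the paper's first step, although your numerology is off: the relevant minimal-growth Hilbert polynomial is $t+1+{\rm const}$ (a line plus finitely many points), not $2t+1$ (a conic or pair of lines), and in the first bullet $h_{F^cV}(1)=2$ gives $c+1$ independent linear forms cutting out a \emph{line}, not a codimension-two linear space.

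The genuine gap is in your last paragraph. What Gotzmann gives you is a line $\ell$ in the base locus of $(F^cV)_e$ for some $e\le d_c-1$, i.e.\ in the zero locus of the ideal generated by the graded pieces of degree $\le e$. It is \emph{not} in the zero locus of the full ideal $F^cV$ --- that zero locus is empty, since $S/F^cV$ is Artinian (you yourself invoke base-point-freeness in degree $D-c$). In particular $g_c$, which has degree $d_c>e$, is an element of $F^cV$ but is in no way forced to vanish on $\ell$; so the inference ``$g_i\in F^cV$, hence $\ell\subset V(g_i)$ for all $i$, hence $\ell\subset X_H$'' fails exactly where it is needed. You flag this as a ``subtlety'' about saturation and claim that base-point-freeness in degree $D-c$ plus duality controls it, but no such control is available: the whole content of the lemma is that the low-degree linear locus is forced to lie on $X_H$, and this cannot be extracted from the Hilbert function of $F^cV$ alone. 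The paper's proof instead uses that $F^cV_{d_c}$ is tangent to a component of the Noether--Lefschetz locus in $|\cO_{X'}(d_c)|$ and runs Otwinowska's argument: assuming $\ell\not\subset X_H$, one produces a vector field $v\in\tilde T(L)$ whose Lie derivative $L_vg_c$ does not vanish on $X_H\cap\ell$, computes that the ideal $(I(\ell),g_1,\dots,g_c,L_vg_c)$ dies in degree $D-c+d_c-1$, and contrasts this with $h_{(E_1:R_L)}(3d_c-2)\neq 0$; the hypothesis $D\le 2d_c$ is used precisely to get $D-c+d_c-1\le 3d_c-2$ and derive the contradiction. Your proposal uses $D\le 2d_c$ for a different, vaguer purpose and never brings in this Noether--Lefschetz input, so the second half of the argument is missing rather than merely sketched.
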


\begin{proof}
In the first case there is a line $\ell$, such that $I(\ell)\subset F^cV$. In the second case we can apply Theorem~\ref{thmGotz} and we obtain that the base locus of $F^cV_{d_c-1}$, resp. $F^cV_{d_c-2}$ consists of a line $\ell$ together with finitely many points.

Let $X'$ be the partial complete intersection $V(g_1,\dots,g_{c-1})$. The space $F^cV_d$ is contained in the tangent space to a component of the Noether-Lefschetz locus in $|\cO_{X'}(d_c)|$. We want to apply the strategy from \cite[Section 7]{OtwNLgen} to conclude that $\ell\subset X_H$. However, in that paper it is assumed that $X'$ is smooth and $d_c$ is sufficiently large. In the case where $X'$ is smooth, i.e., $X$ is nondegenerate in codimension 3, the proof needs little adaptation:

 We mostly use the results of Section 1 and 3 of \cite{OtwNLgen}, which hold for arbitrary $d_c$. To conclude that $X_H$ contains a line we need also the results of Section 7 of \emph{loc. cit.} and this is the place where Otwinowska needs that $d_c$ is sufficiently large. In this section one chooses a general  linear space $L\subset \Ps^{2+c}$ of codimension three and uses it to define a space $\tilde{T}(L)\subset \oplus H^0(X',\mathcal{T}_{X'}(i))$. In particular, for any $v\in \tilde{T}(L)$  and any element $f$ of the coordinate ring of $X'$ one can consider the  Lie derivative $L_v f$.

If $\ell'$ is a line not contained in $X_H$, then it is shown in the proof of \cite[Lemma 10]{OtwNLgen} that there is a $v \in \tilde{T}(L)$ such that for any point $p\in X_H\cap \ell'$ we have  $L_vg_c(p)\neq 0$. In particular, the ideal $I'$ generated by $g_1,\dots,g_{c-1},g_c$, $I(\ell')$ and $L_vg_c$ defines the empty scheme. We can find a $v$ such that $\deg L_v(g_c)=D-c$, (i.e., we could choose $v$ such that $L_v(g_c)$ is a maximal minor of the Jacobian matrix of $X_H$) and using Corollary~\ref{corMacDown} we obtain that 
\[ h_{I'}(k)\leq \left\{ \begin{array}{ll} k+1 & \mbox{if } 0\leq k<d_c\\ 
h_{I'}(k)\leq d_c&\mbox{if } d_c\leq k \leq D-c\\         
h_{I'}(k)\leq D-c+d_c-1-k& \mbox{if } D-c\leq k \leq D-c+d_c-1\\
                         \end{array}\right.\]
 In particular, $h_{I' }(D-c+d_c-1)=0$ holds.

 Let $E_1:=(F^cV:g_c)$ and let $R_L$ be a linear form vanishing on the ``cone'' $C(L,\ell)\cong \Ps^c$. From \cite[Lemme 11]{OtwNLgen} it follows that $(I(\ell),g_1,\dots,g_c,L_vg_c \mid v\in \tilde{T}(L))$ is contained in $(E_1:R_L)$ and from \cite[Lemme 12]{OtwNLgen} it follows that $h_{(E_1:R_L)}(3d_c-2)\neq 0$. 
Since $D\leq 2d_c$ we have $D-c+d_c-1\leq 3d_c-2$ and we obtain that $\ell \subset X_H$. 

If $X'$ has isolated singularities we have to proceed in a slightly different way. Since $X'$ is smooth in a neighbourhood of $X_H$, we can consider $X_H$  as a hypersurface in a resolution of singularities $\tilde{X'}$ of $X'$. This allows us to define the ideals $E_i$ as in \cite[Section 1]{OtwNLgen}. More problematic is the definition of $\tilde{T}$ and $\tilde{T}(L)$. Let $C_{X'}^*$ be the affine cone over $X'$ minus the vertex and $C^*_{\tilde{X'}}$ the fiber product $C^*_{X'} \times_{X'} \tilde{X'}$. We can now take the global section of the tangent sheaf of $C^*_{\tilde{X'}}$ and define a grading as in \cite[Section 3]{OtwNLgen}. Then Proposition 1 of \emph{loc. cit.} holds true.

We define now $\tilde{T}(L)$ similarly as in \cite[Section 7]{OtwNLgen} but we have to restrict to elements in $\tilde{T}$ such that their pushforward to $X'$ is well defined. If $\ell'\not\subset X'$ then we can find again a $v\in T(L)$ such that $h_{(I(\ell'),f_1,\dots,f_c,L_vf_c)}(D-c+d_c-1)=0$. Similarly we can show that  $h_{(E_1:R_L)})(3d_c-2)\neq 0$ and we obtain again that $\ell \subset X_H$.
\end{proof}

\begin{lemma} \label{lemLine} Suppose that $X$ has defect, but no induced defect. Moreover, suppose that  the intersection of a general member of the equisingular deformation space of $X$ with a general hyperplane is a surface containing a line. Then the number of nodes of $X$ is at least \[ \sum_{i\leq j}(d_i-1)(d_j-1).\]
\end{lemma}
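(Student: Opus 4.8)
The hypothesis gives us a line $\ell$ contained in $X_H$, for a general equisingular deformation $X$ and a general hyperplane $H$; by Lemma~\ref{lemDefLoc} we may assume $X$ itself is general in its equisingular family, so this applies to $X$. The strategy is to estimate $h_V(k)$ in all degrees $k \leq D+d_c-c-2$ and sum. First I would record what is already known: Corollary~\ref{corHighDeg} handles $d_c \leq k \leq D+d_c-c-2$, giving $h_V(k) \geq D+d_c-c-2-k$; Lemma~\ref{lemHighFc} and Lemma~\ref{lemLowFc} control $h_{F^cV}$ in high and low degrees respectively; and the recursion $h_{F^iV}(k) = h_{F^{i+1}V}(k) + h_{P^iV}(k+d_i-d_c)$ relates the graded pieces of $V$ to those of the ideals $P^iV$ and $F^cV$. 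The plan is to feed the existence of the line $\ell$ into lower bounds for the ideals $P^iV$: since the line is present in $X_H$, the ideal $I(\ell)$ (or something close to it) should be contained in each $P^iV$, and $S/I(\ell)$ has Hilbert function $k+1$ through degree any bound, so $h_{P^iV}(k) \leq k+1$ translates into control, while the lower bounds $h_{P^iV}(k) \geq \min(\text{something}, k+1)$ come from the fact that $F^iV$ is large.

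**Key steps in order.** (1) Use the line $\ell \subset X_H$ together with the structure of $V$ (the Jacobian rows and the $g_i e_j$ lie in $V$) to show each $P^iV$ contains an ideal whose vanishing locus is contained in $\{p : \ell \text{ meets an appropriate subvariety}\}$, hence $P^iV_k$ is base-point-free outside $\ell$ for suitable $k$; combined with Corollary~\ref{corGreen} or Theorem~\ref{thmGotz}, deduce that $h_{P^iV}(k)$ eventually decreases by exactly one per degree, so $h_{P^iV}(k) = k+1$ for small $k$ and drops linearly for large $k$. (2) Apply the recursion $c-1$ times: $h_V(k) = h_{F^cV}(k) + \sum_{i=1}^{c-1} h_{P^iV}(k+d_i-d_c)$, with $h_{F^cV}$ supplied by Lemmas~\ref{lemHighFc}–\ref{lemLowFc}. (3) Sum over $0 \leq k \leq D+d_c-c-2$. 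The diagonal-type sum $\sum_k (k+1)$ truncated at the socle-type degree, summed over the $c$ ideals (one $F^cV$ plus $c-1$ copies of $P^iV$ with shifts $d_i - d_c$), should telescope to $\sum_{i \leq j}(d_i-1)(d_j-1)$ — indeed this matches the known sharp value, and one checks it is exactly $\sum_{i}\binom{d_i}{2} + \sum_{i<j}(d_i-1)(d_j-1)$, i.e. the count of monomials obstructing a complete intersection of multidegree $(d_1-1,\dots,d_c-1)$ or similar. (4) Since $\#X_{\sing} \geq \sum_{j=0}^k h_V(j)$ for every $k$, take $k = D+d_c-c-2$ and conclude.

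**Main obstacle.** The delicate point is step (1): transferring "the surface $X_H$ contains a line $\ell$" into a genuine containment $I(\ell) \subseteq P^iV$ (or a sufficiently large sub-ideal). The module $V$ was cut out by a Noether–Lefschetz tangency condition coming from Proposition~\ref{prpW}, and I expect one must argue — as in the proof of Lemma~\ref{lemFcsmall} via Otwinowska's machinery \cite{OtwNLgen} — that the semiregularity/tangent-space description forces the defining polynomials of the relevant linear systems to vanish along $\ell$, not merely at finitely many points. Handling the case where the partial complete intersection $X'_c = V(g_1,\dots,g_{c-1}) \cap H$ is only nodal (rather than smooth) will, as elsewhere in the paper, require passing to a resolution and restricting to pushforward-compatible vector fields. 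A secondary bookkeeping obstacle is verifying that the combinatorial sum in step (3) really equals $\sum_{i\leq j}(d_i-1)(d_j-1)$ and that the degree ranges where the various lower bounds hold actually tile the interval $[0, D+d_c-c-2]$ without gaps; the shifts $d_i - d_c \leq 0$ mean each $P^iV$ contributes in a shifted window, and one needs the windows $[d_c - d_i - 1, \dots]$ and the Gorenstein symmetry of $F^cV$ to cover everything. I would organize step (3) as a single explicit computation once the per-degree bounds from steps (1)–(2) are in hand.
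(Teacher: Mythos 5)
Your proposal starts from the right place: the line $\ell\subset X_H$ has to be converted into the containment $L:=\oplus_i I(\ell)(d_i-d_c)\subseteq V$. The paper gets this more directly than you anticipate --- not via Otwinowska's vector-field machinery, but by noting that $(X_t,H)\mapsto X_{t,H}$ lands in the Noether--Lefschetz component parametrizing surfaces containing a line, so one may take $V_{D+d_c-c-3}$ to be exactly the orthogonal complement of the primitive class of $\ell$, whence $\oplus_i I(\ell)_{D+d_i-c-3}\subset V_{D+d_c-c-3}$. The genuine gap is in how you then use this containment. Since $h_V(k)=\dim\bigl(\oplus_i S_{k-d_c+d_i}/V_k\bigr)$ and the node count is bounded below by $\sum_k h_V(k)$, you need \emph{lower} bounds on $h_V$, i.e.\ \emph{upper} bounds on $\dim V_k$. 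The containment $L\subseteq V$ only gives $h_V(k)\leq h_L(k)=\sum_i\max(0,k+1-d_c+d_i)$, which is the wrong direction. Your plan to obtain the reverse inequality for the pieces $P^iV$ ``from the fact that $F^iV$ is large'' does not supply an argument: there is no Gorenstein duality for $P^iV$ (only $S/F^cV$ is Gorenstein), and enlarging an ideal pushes the Hilbert function of the quotient down, not up.

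The missing idea is the proof that $V_k=L_k$ for all $k\leq d_c-2$, which is the real content of the paper's argument. One first checks $\dim V_{d_c-1}(p)=c$ for every $p\in\Ps^{2+c}$: for $p\notin\ell$ this already holds for $L$, and for $p\in\ell\subset X_H$ it follows because the rows of the Jacobian matrix of the smooth surface $X_H$ lie in $V_{d_c-1}$. By \cite[Proposition 1]{KimNL} and $h_V(D+d_c-c-3)\geq 1$ this forces $h_V(d_c-1)\geq D-c-1$, hence $\dim(V_{d_c-1}/L_{d_c-1})\leq c+1$. One then exhibits $c+1$ generators of $V$ in degrees $\leq d_c-1$ that are independent modulo $L$: the $c$ elements $v_1,\dots,v_c$ whose determinant is nonzero at a chosen $p\in\ell$, plus one more because that determinant must vanish somewhere on $\ell$. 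All $c+1$ must therefore sit in degree exactly $d_c-1$, which yields $V_k=L_k$ for $k\leq d_c-2$ and hence the exact low-degree Hilbert function $h_V(k)=\sum_i\max(0,k+1-d_c+d_i)$; combined with $h_V(k)\geq D+d_c-c-2-k$ in degrees $k\geq d_c-1$ the sum evaluates to $\sum_i\binom{d_i}{2}+\binom{D-c}{2}=\sum_{i\leq j}(d_i-1)(d_j-1)$. (Your bookkeeping identity $\sum_{i\leq j}(d_i-1)(d_j-1)=\sum_i\binom{d_i}{2}+\sum_{i<j}(d_i-1)(d_j-1)$ is also false, since $(d_i-1)^2\neq\binom{d_i}{2}$ in general, but that is secondary to the directional issue above.)
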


\begin{proof}
The map $(X_t,H)\mapsto X_{t,H}$ defines a map from the product of the equisingular deformation space of $X$ and $(\Ps^{3+c})^{\vee}$ to a component $\NL(\gamma)$ of the Noether-Lefschetz locus of complete intersections of degree $(d_1,\dots,d_c)$ in $\Ps^{2+c}$.
Since $(X,H)$ are chosen general and $X_H$ contains a line we may assume that the image of this map lands in the component parametrizing surfaces containing a line, i.e, we can take  $\gamma$ to be the primitive part of the class of the line. Then $W_{D+d_c-c-3}$ is contained in the lift to $\oplus S_{D+d_i-c-3}$ of the orthogonal complement of $\gamma$ in $H^{1,1}(X,\C)_{\prim}$ and we may take $V_{D+d_c-c-3}$ to be precisely this orthogonal complement.

It follows directly from Noether-Lefschetz theory that $\oplus I(\ell)_{D+d_i-c-3}\subset V_{D+d_i-c-3}$ and therefore that $L:=\oplus I(\ell)(d_i-d_c)$ is contained in $ V$. If $p\not \in \ell$ then $\dim L_{d_c-1}(p)=c$. If $p\in \ell$ then $p\in X_H$ since $X_H$ is smooth at $p$ and the rows of the Jacobian matrix of $X_H$ are contained in $V_{d_c-1}$ it follows that  $\dim V_{d_c-1}(p)=c$.

In particular $\dim V_{d_c-1}(p)=c$ for all $p\in \Ps^{2+c}$. From \cite[Proposition 1]{KimNL} it follows now that $h_V(d_c-1)\geq D-c-1$.  
Let $p$ be a point on $\ell$.  Then there are elements $v_1,\dots,v_c\in V$, each of degree at most $d_c-1$, such that the determinant of $v_1,\dots,v_c$ does not vanish at $p$. This yields $c$ generators of $V$ which are independent modulo $L$. However the determinant of $v_1,\dots,v_c$ vanishes at some point on $\ell$, hence there are at least $c+1$ generators of $V$ of degree at most $d_c-1$, which are independent modulo $L$. Since $h_V(d_c-1)-h_L(d_c-1)\leq c+1$ holds, we obtain that each of these generators in $c+1$ and that $L_k=V_k$ for $k\leq d_c-2$. In particular, 
\[ h_V(k)=\sum_{i=1}^{c} \max(0,k+1-d_c+d_i).\]
Combining this with $h_V(k)\geq D+d_c-c-2-k$ for $d_c-1\leq k \leq D+d_c-c-3$ yields
\[ \sum_{k=0}^{D+d_c-c-3} h_V(k)\geq \sum_{i\leq j}(d_i-1)(d_j-1).\]
\end{proof}

We denote with $h_{V_L}$ the Hilbert function of the  $S$-algebra constructed in the previous proof. Then
\[ h_{V_L}(k)=\left\{ \begin{matrix} \sum_{i=1}^c \max(0,k+1-d_c+d_i)  & k\leq d_c-2 \\ D+d_c-c-2-k & d_c-1\leq k \leq D+d_c-c-2 \\ 0 & k\geq D+d_c-c-2 \\ \end{matrix}\right.\]

We will next focus on the case where $X_H$ does not contain a line.
We consider first the case where $h_{F^cV}(d_c-1)$ is sufficiently large:

\begin{lemma}\label{lemFCBig} Suppose $D<2d_c$ and $h_{F^cV}(d_c-1)\geq D-c+1$, 
then we have
\[ \sum_{k=0}^{D+d_c-c-3} h_V(k)> \sum_{i\leq j}(d_i-1)(d_j-1).\]
\end{lemma}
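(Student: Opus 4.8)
The plan is to compare the Hilbert function $h_V$ against the reference function $h_{V_L}$ from the line case, since Lemma~\ref{lemLine} already established that $\sum_k h_{V_L}(k) \geq \sum_{i\leq j}(d_i-1)(d_j-1)$, and a strict inequality somewhere will give the claim. The key point is that the hypothesis $h_{F^cV}(d_c-1)\geq D-c+1$ gives us one more dimension in degree $d_c-1$ of the bottom ideal $F^cV$ than the line case provides (where $h_{V_L}$ forces $h_{F^cV}(d_c-1)=D-c$, coming from the line together with the surface geometry). So first I would unwind the filtration identity $h_{F^iV}(k)=h_{F^{i+1}V}(k)+h_{P^iV}(k+d_i-d_c)$ to write $h_V=h_{F^1V}=\sum_{i=1}^{c-1} h_{P^iV}(\cdot + d_i-d_c) + h_{F^cV}$, reducing everything to bounding the ideals $P^iV$ and $F^cV$ separately.

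Next I would handle $F^cV$. By Lemma~\ref{lemHighFc} we already know $h_{F^cV}(k)= D+d_c-c-2-k$ for $D-c-1\leq k\leq D+d_c-c-3$, and by Lemma~\ref{lemLowFc} that $h_{F^cV}(k)\geq k+1$ for $k\leq d_c-2$. The hypothesis says $h_{F^cV}(d_c-1)\geq D-c+1$. Since $F^cV$ is Artinian Gorenstein of socle degree $D+d_c-c-3$ and $F^cV_{D-c}$ is base point free (as in Lemma~\ref{lemHighFc}), I would apply Corollary~\ref{corGreen} — noting $h_{F^cV}(d_c-1)\geq D-c+1 > d_c-1$ requires first getting the Hilbert function down to $\leq$ its degree via Macaulay, or more directly argue via Gotzmann/Green that the Hilbert function is strictly decreasing from degree $d_c-1$ onward — to control $h_{F^cV}$ in the range $d_c-1\leq k\leq D-c-1$ from below by something like $\min(D-c+1, D+d_c-c-2-k)$, strictly exceeding the line-case value in at least one degree. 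Meanwhile for the $P^iV$, I would use that $g_ie_j\in V$ forces each $P^iV$ to contain the image of the appropriate $g_j$'s, so $h_{P^iV}(k)\geq \max(0, k+1-d_c+d_i)$ matching the line-case contribution, i.e. no loss there.

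Then I would assemble: $\sum_k h_V(k) = \sum_k h_{F^cV}(k) + \sum_{i<c}\sum_k h_{P^iV}(k+d_i-d_c) \geq \big(\sum_k h_{V_L,F^c}(k) + \text{(at least one extra)}\big) + \sum_{i<c}\sum_k \max(0,k+1-d_c+d_i)$, and since the line-case total $\sum_k h_{V_L}(k)$ decomposes the same way with $h_{V_L,F^c}(k)=D+d_c-c-2-k$ on $d_c-1\leq k\leq D+d_c-c-2$ and $k+1$ below (which the Gorenstein symmetry of $h_{V_L,F^c}$ gives), the strict gain in some degree $k$ with $d_c-1\leq k\leq D-c-2$ — where the line case has $h_{V_L,F^c}(k)=D+d_c-c-2-k < D-c+1$ but our $h_{F^cV}(k)\geq \min(D-c+1,\ldots)$ — propagates to the strict inequality $\sum_k h_V(k) > \sum_{i\leq j}(d_i-1)(d_j-1)$. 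I expect the main obstacle to be the bookkeeping in the middle range $d_c-1\leq k\leq D-c-1$: one must verify that the Gorenstein symmetry plus the base-point-freeness of $F^cV_{D-c}$ plus Corollary~\ref{corGreen} genuinely force $h_{F^cV}(k)\geq \min(D-c+1, D+d_c-c-2-k)$ there, i.e. that the function cannot dip below $D-c+1$ before it starts its forced linear descent, and that $D<2d_c$ is exactly what makes the "plateau at $D-c+1$" and the "descending tail" overlap in a way that yields a net surplus of at least one over the line case.
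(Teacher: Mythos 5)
Your overall framing --- compare $h_V$ with the line-case function $h_{V_L}$ and exhibit a strict surplus in at least one degree --- is exactly the paper's strategy, and the surplus is indeed free: $h_V(d_c-1)\geq h_{F^cV}(d_c-1)\geq D-c+1>D-c-1=h_{V_L}(d_c-1)$. But the way you propose to establish $h_V(k)\geq h_{V_L}(k)$ in degrees $k\leq d_c-2$ has a genuine gap. Your claim $h_{P^iV}(k)\geq\max(0,k+1-d_c+d_i)$ ``because $g_ie_j\in V$'' runs the inequality backwards: here $h$ measures \emph{codimension} (for an ideal $I$, $h_I(k)=\dim(S/I)_k$), so knowing that certain elements lie in $V$, hence in $P^iV$, only bounds $h_{P^iV}$ from \emph{above}; a priori $P^iV_k$ could be all of $S_k$ and $h_{P^iV}(k)=0$. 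Consequently the terms of $h_{V_L}(k)=\sum_i\max(0,k+1-d_c+d_i)$ with $i<c$ are not accounted for, and Lemma~\ref{lemLowFc} only supplies $k+1$, i.e.\ the $i=c$ term. The deficit is not small: for $c=2$, $d_1=d_2-1$, $k=d_2-2$ you would be short by $d_2-2$, which swamps the surplus of $2$ at $k=d_c-1$.

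The missing idea is that the hypothesis itself, pushed \emph{downward} through Macaulay (Corollary~\ref{corMacDown}, in the form of Corollary~\ref{corMacLowDeg}), makes $F^cV$ alone dominate all of $h_{V_L}$ in low degrees: since $d_c\leq D-c+1\leq 2(d_c-1)$, one gets $h_{F^cV}(k)\geq\min(k+D-d_c-c+2,\,2k+1)$ for $k\leq d_c-2$, and a short check using $D<2d_c$ shows this is $\geq\sum_i\max(0,k+1-d_c+d_i)$. That one application is essentially the paper's entire proof; the $P^iV$ never need to be estimated. Separately, your middle-range ``plateau'' $h_{F^cV}(k)\geq\min(D-c+1,\,D+d_c-c-2-k)$ on $d_c-1\leq k\leq D-c-1$ does not follow from Green/Gotzmann plus Gorenstein duality as stated (the symmetric Hilbert function can dip in the interior of that self-dual range, since Macaulay permits it to climb back by one per step), but it is also unnecessary: Corollary~\ref{corHighDeg} already gives $h_V(k)\geq D+d_c-c-2-k=h_{V_L}(k)$ for $k\geq d_c$, and the single strict surplus at $k=d_c-1$ closes the argument.
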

\begin{proof}
From Corollary~\ref{corMacDown} it follows (using $h_{F^cV}(d_c-1)\geq D-c+1\geq d_c$ and $D \leq 2d_c-1$) that
\[h_{F^cV}(k)\geq \min(k+ D-d_c+2-c,2k+1).\] The latter is at least $\sum_{i=1}^c \max(k+1-d_c+d_i,0)=h_{V_L}(k)$ for $0\leq k \leq d_c-2$. Hence $h_V(k)\geq h_{V_L}(k)$ for all $k$ and there is at least one $k$ for which the inequality is strict, which implies 
\[ \sum_{k=0}^{D+d_c-c-3} h_V(k)> \sum_{i\leq j}(d_i-1)(d_j-1).\]
\end{proof}

Corollary~\ref{corMacDown} then directly implies
\begin{lemma} \label{lemBnd1} Suppose that $d_c\geq 3$, $D<2d_c$ and $h_{F^cV}(d_c-1)\leq D-c$ holds. 
then 
\[ h_{F^cV}(d_c-2)\geq h_{F^cV}(d_c-1)-1.\]
\end{lemma}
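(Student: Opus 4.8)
The statement to prove is Lemma~\ref{lemBnd1}: under the hypotheses $d_c \ge 3$, $D < 2d_c$ and $h_{F^cV}(d_c-1) \le D-c$, we want $h_{F^cV}(d_c-2) \ge h_{F^cV}(d_c-1) - 1$. The strategy is the one announced in the sentence preceding the statement: apply Corollary~\ref{corMacDown} to the ideal $F^cV$ in degree $d:=d_c-1$, which gives $h_{F^cV}(d_c-2) \ge c_{*d}$ where $c := h_{F^cV}(d_c-1)$. So everything reduces to the purely combinatorial estimate $c_{*d} \ge c-1$ for the relevant range of $c$, namely $0 \le c \le D-c \le 2d_c - c - 1 < 2d_c - 1 \le 2d$, i.e. $c \le 2d-1$ (one should double-check the exact bound on $c$ that the hypotheses yield; since $c\ge 3$ gives $D - c \le 2d_c - 1 - c \le 2d_c - 4 = 2(d_c-1) - 2 = 2d - 2$, so actually $c \le 2d-2$, comfortably inside the range $c \le 2d$).

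**The combinatorial core.** I would verify $c_{*d} \ge c - 1$ directly from the Macaulay expansion, using the three explicit cases already tabulated in the excerpt right before Corollary~\ref{corMacLowDeg}. If $c \le d$, the expansion has $\epsilon_d = \dots = \epsilon_{d-c+1} = 0$ and the rest $-1$; then $c_{*d} = \sum_{i=2}^d \binom{i+\epsilon_i - 1}{i-1}$, and the only $i \ge 2$ contributing nontrivially are those with $\epsilon_i = 0$, each giving $\binom{i-1}{i-1}=1$, so $c_{*d}$ counts exactly the indices $i$ in $\{2,\dots,d\}$ with $\epsilon_i = 0$, which is $c$ or $c-1$ (it is $c-1$ precisely when $\epsilon_1 = 0$, i.e. $c=d$, in which case the $i=1$ term is absent). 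Hence $c_{*d} \ge c-1$ always in this range. For $d+1 \le c \le 2d$, write $c = d + 1 + a$ with $0 \le a \le d-1$; the expansion has $\epsilon_d = 1$, $\epsilon_{d-1} = \dots = \epsilon_{d-a} = 0$, rest $-1$. Then $c_{*d}$ picks up $\binom{d}{d-1} = d$ from $i=d$ and $1$ from each of the $a$ indices with $\epsilon_i = 0$ (those in $\{2,\dots,d-1\}$; there are $a$ of them unless $d-a = 1$, i.e. $a = d-1$, when there are $a-1$), giving $c_{*d} \ge d + a - 1 = c - 2$... so this naive bound is off by one and I need to be more careful: actually when $a \le d-2$ we get $c_{*d} = d + a = c - 1$, and when $a = d-1$ (so $c = 2d$) we get $c_{*d} = d + (d-2) = 2d - 2 = c - 2$. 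So the clean inequality $c_{*d} \ge c-1$ holds for $c \le 2d-1$, and the case $c = 2d$ would need separate handling or be excluded by the hypotheses. Given that the hypotheses force $c \le 2d-2$ (from $D < 2d_c$, $c \ge 3$), we are safely inside the good range.

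**Assembly.** The proof is then two lines: set $c := h_{F^cV}(d_c-1)$, observe $F^cV$ is an ideal (noted in the excerpt, as $S/F^cV$ is Artinian Gorenstein), apply Corollary~\ref{corMacDown} with $d = d_c - 1 \ge 2$ to get $h_{F^cV}(d_c - 2) \ge c_{*d}$, and invoke the combinatorial estimate above, after recording that the standing hypotheses give $c = h_{F^cV}(d_c-1) \le D - c \le 2d_c - 1 - c \le 2(d_c-1) - 1 = 2d - 1$ (using $c \ge 3$), so $c_{*d} \ge c - 1$.

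**The main obstacle.** There is essentially no deep obstacle here — this is a lemma whose entire content is packaging Corollary~\ref{corMacDown} together with an elementary inequality about $c_{*d}$. The only place to be careful is the edge behavior of $c \mapsto c_{*d}$ near $c = 2d$ and near $c = d$, where the inequality $c_{*d} \ge c-1$ is tight or can fail by one, so the proof must explicitly check that the range of $c$ permitted by $d_c \ge 3$ and $D < 2d_c$ keeps us in the safe zone $c \le 2d - 1$. One should also confirm the index bound $d = d_c - 1 \ge 2$, which follows from $d_c \ge 3$, so that Corollary~\ref{corMacDown} applies. I would write it as:

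\begin{proof}
Recall that $F^cV$ is an ideal of $S$, since $S/F^cV$ is Artinian Gorenstein. Set $d:=d_c-1$; by hypothesis $d\geq 2$. Put $c':=h_{F^cV}(d)=h_{F^cV}(d_c-1)$. By Corollary~\ref{corMacDown},
\[ h_{F^cV}(d_c-2)=h_{F^cV}(d-1)\geq c'_{*d}.\]
It remains to show $c'_{*d}\geq c'-1$. By the hypotheses $D<2d_c$ and $d_c\geq 3$ we have
\[ c'=h_{F^cV}(d_c-1)\leq D-c\leq 2d_c-1-c\leq 2d_c-4=2d-2,\]
so in particular $c'\leq 2d-1$.

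Consider the Macaulay expansion of $c'$ in base $d$. If $c'\leq d$, then $\epsilon_d=\dots=\epsilon_{d-c'+1}=0$ and $\epsilon_{d-c'}=\dots=\epsilon_1=-1$; hence in $c'_{*d}=\sum_{i=2}^d\binom{i+\epsilon_i-1}{i-1}$ the only nonzero terms are those with $\epsilon_i=0$ and $i\geq 2$, each equal to $1$. The number of such $i$ is $c'$ if $\epsilon_1=-1$ and $c'-1$ if $\epsilon_1=0$; in either case $c'_{*d}\geq c'-1$.

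If $d+1\leq c'\leq 2d-1$, write $c'=d+1+a$ with $0\leq a\leq d-2$. Then $\epsilon_d=1$, $\epsilon_{d-1}=\dots=\epsilon_{d-a}=0$ and $\epsilon_{d-a-1}=\dots=\epsilon_1=-1$. Since $a\leq d-2$ we have $d-a\geq 2$, so the indices $i\in\{2,\dots,d-1\}$ with $\epsilon_i=0$ are exactly $d-1,d-2,\dots,d-a$, which is $a$ of them. Therefore
\[ c'_{*d}=\binom{d+1-1}{d-1}+\sum_{\substack{2\leq i\leq d-1\\ \epsilon_i=0}}\binom{i-1}{i-1}=d+a=c'-1.\]
In all cases $c'_{*d}\geq c'-1$, which gives $h_{F^cV}(d_c-2)\geq h_{F^cV}(d_c-1)-1$.
\end{proof}
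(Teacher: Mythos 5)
Your proof is correct and follows exactly the route the paper intends: the paper's entire argument for this lemma is the phrase ``Corollary~\ref{corMacDown} then directly implies,'' and you have supplied precisely the missing combinatorial verification that $c'_{*d}\ge c'-1$ for $c'\le 2d-1$ with $d=d_c-1$. One cosmetic slip: the step $2d_c-1-c\le 2d_c-4$ tacitly assumes $c\ge 3$, whereas the lemma is also invoked for $c=2$; but for any $c\ge 2$ one still gets $c'\le D-c\le 2d_c-3=2d-1$, which is all your case analysis actually requires.
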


Note that $D<d_c+c+1$ implies $c=2,d_1=2$.
\begin{lemma}\label{lemBnd2} Suppose that $d_c+c+1\leq D \leq 2d_c$.
Assume that  $X_H$ does not contain a line and that $h_{F^cV}(d_c-1)\leq D-c-2$ holds  then we have
\[ h_{F^cV}(d_c-2)\leq h_{F^cV}(d_c-1).\]
\end{lemma}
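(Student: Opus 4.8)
The goal is to bound $h_{F^cV}(d_c-2)$ from above by $h_{F^cV}(d_c-1)$ under the running hypotheses $d_c+c+1\le D\le 2d_c$ and $h_{F^cV}(d_c-1)\le D-c-2$, together with the assumption that $X_H$ contains no line. The plan is to argue by contradiction: suppose $h_{F^cV}(d_c-2)\ge h_{F^cV}(d_c-1)+1$. Combined with Lemma~\ref{lemBnd1} (which applies since $d_c\ge 3$ follows from $D\ge d_c+c+1\ge d_c+3$, and $D<2d_c$ follows from $D\le 2d_c$ unless $D=2d_c$; the boundary case $D=2d_c$ may need a separate line), we would then have $h_{F^cV}(d_c-2)=h_{F^cV}(d_c-1)+1$, i.e. the Hilbert function of $F^cV$ \emph{drops by exactly one} from degree $d_c-2$ to degree $d_c-1$.

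The key step is to invoke the hypothesis $h_{F^cV}(d_c-1)\le D-c-2\le 2d_c-c-2\le 2d_c-4$ (using $c\ge 2$), so that $h_{F^cV}(d_c-2)=h_{F^cV}(d_c-1)+1\le 2d_c-3$. Then the second bullet of Lemma~\ref{lemFcsmall} applies: with $h_{F^cV}(d_c-1)=h_{F^cV}(d_c-2)-1$ and $h_{F^cV}(d_c-2)\le 2d_c-4$, we conclude that $X_H$ contains a line. (One must double-check that $h_{F^cV}(d_c-2)\le 2d_c-4$ and not merely $\le 2d_c-3$; this is where the slightly stronger hypothesis $h_{F^cV}(d_c-1)\le D-c-2$ rather than $\le D-c-1$ is being spent, via $h_{F^cV}(d_c-2)=h_{F^cV}(d_c-1)+1\le D-c-1\le 2d_c-c-1$, and then $c\ge 3$ gives $\le 2d_c-4$ while $c=2$ gives $\le 2d_c-3$; so for $c=2$ one needs to track the bound more carefully, presumably using that $D<2d_c$ strictly when $c=2$ and $d_1<d_2$, giving $D\le 2d_c-1$ and hence $h_{F^cV}(d_c-2)\le 2d_c-2-c=2d_c-4$.) This contradicts the assumption that $X_H$ contains no line, completing the proof.

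\textbf{Main obstacle.} The delicate point is the bookkeeping on the numerical inequalities needed to land inside the hypotheses of the second bullet of Lemma~\ref{lemFcsmall}, in particular verifying $h_{F^cV}(d_c-2)\le 2d_c-4$ across all the cases $c=2$ versus $c\ge 3$ and $D=2d_c$ versus $D<2d_c$, since the difference between $2d_c-3$ and $2d_c-4$ is exactly what separates ``line'' from ``no line'' in that lemma. I expect the argument to split into the $c=2$ case — where one uses that the remaining subcase has $d_1<d_2$, so $D=d_1+d_2\le 2d_2-1=2d_c-1$, forcing $h_{F^cV}(d_c-2)\le D-c-1\le 2d_c-4$ — and the $c\ge 3$ case, where $h_{F^cV}(d_c-1)\le D-c-2\le 2d_c-c-2$ and $h_{F^cV}(d_c-2)\le 2d_c-c-1\le 2d_c-4$ directly. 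In both cases the contradiction with the no-line assumption closes the argument, and no genuinely new idea beyond Lemmas~\ref{lemBnd1} and~\ref{lemFcsmall} is required.
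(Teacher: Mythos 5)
There is a genuine gap at the heart of your argument. You assume for contradiction that $h_{F^cV}(d_c-2)\geq h_{F^cV}(d_c-1)+1$ and then claim that, "combined with Lemma~\ref{lemBnd1}", the drop must be \emph{exactly} one, so that the second bullet of Lemma~\ref{lemFcsmall} applies. But Lemma~\ref{lemBnd1} gives $h_{F^cV}(d_c-2)\geq h_{F^cV}(d_c-1)-1$, a \emph{lower} bound on $h_{F^cV}(d_c-2)$ — the same direction as your contradiction hypothesis — so it cannot pin the difference down to $1$. Nothing in the paper's toolbox bounds $h_{F^cV}(d_c-2)$ from above in terms of $h_{F^cV}(d_c-1)$: Macaulay's theorem and Corollary~\ref{corMacDown} control growth as the degree \emph{increases} (equivalently, they bound $h$ in a lower degree from below), and Gorenstein Hilbert functions need not be unimodal, so a priori $h_{F^cV}(d_c-2)$ could exceed $h_{F^cV}(d_c-1)$ by more than $1$, in which case neither bullet of Lemma~\ref{lemFcsmall} applies and your argument says nothing. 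This is precisely the difficulty the paper's proof avoids: it uses Gorenstein duality to translate the claim to the dual degrees, $h_{F^cV}(d_c-1)=h_{F^cV}(D-c-2)$ and $h_{F^cV}(d_c-2)=h_{F^cV}(D-c-1)$, where the inequality to be proved concerns growth from degree $D-c-2$ to $D-c-1$ and is therefore amenable to Macaulay/Gotzmann; the hypothesis $h_{F^cV}(d_c-1)\leq D-c-2$ puts the count below the degree, the only escape is a line in the base locus of $F^cV_{D-c-1}$, and since $g_1,\dots,g_c\in F^cV$ have degrees at most $d_c\leq D-c-1$ that line would lie on $X_H$, contradicting the no-line hypothesis.

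A secondary problem, which you partly flag yourself: even granting the exact drop of $1$, the second bullet of Lemma~\ref{lemFcsmall} requires $h_{F^cV}(d_c-2)\leq 2d_c-4$, and for $c=2$ with $D=2d_c$ (allowed by the lemma's hypothesis $D\leq 2d_c$, and forcing $d_1=d_2$) you only obtain $h_{F^cV}(d_c-2)\leq D-c-1=2d_c-3$. Your proposed fix invokes $D<2d_c$ "strictly", which contradicts the case you are in. The paper's route needs no such bound, since Gotzmann at degree $D-c-2$ only requires $h_{F^cV}(D-c-2)\leq D-c-2$, which is the stated hypothesis.
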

\begin{proof}
From Gorenstein duality it follows that $h_{F^cV}(D-c-2)=h_{F^cV}(d_c-1)$. Using Theorems~\ref{thmMac} and~\ref{thmGotz} it follows that $h_{F^cV}(D-c-1)\leq h_{F^cV}(D-c-2)+1$ and that if equality holds then the base locus of $F^cV_{D-c-1}$ contains a line $\ell$. Suppose first that equality holds. Since $g_1,\dots,g_c$ are contained in $ F^cV$ and $D-c-1\geq d_c$ holds, we have  that $g_1,\dots,g_c\in I(\ell)$. In particular, $X_H$ contains a line, contradicting our assumptions. Hence $h_{F^cV}(D-c-1)\leq h_{F^cV}(D-c-2)$ and by Gorenstein duality we get  $h_{F^cV}(d_c-2)\leq h_{F^cV}(d_c-1)$. 
\end{proof}

\begin{lemma}\label{lemScheme} Suppose $l:=h_{F^cV}(d_c-2)=h_{F^cV}(d_c-1)$ and $l\leq D-c-2$  hold. Then there exists a zero-dimensional scheme $\Delta$ such that $F^cV_k=I(\Delta)_k$ for $k\leq D-c-1$ and 
\[ V_k \subset \oplus_{i=1}^{c} I(\Delta)_{k+d_i-d_c}\]
for $k\leq d_c-2$.
\end{lemma}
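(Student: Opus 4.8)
\textbf{Proof proposal for Lemma~\ref{lemScheme}.}

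The plan is to produce the scheme $\Delta$ directly from the graded piece $F^cV_{D-c-1}$ and then verify the two claimed containments degree by degree. First I would apply Gotzmann's theorem (Theorem~\ref{thmGotz}) to the hypothesis $h_{F^cV}(d_c-2)=h_{F^cV}(d_c-1)=l$ with $l\le D-c-2$. Since $S/F^cV$ is Artinian Gorenstein of socle degree $D+d_c-c-3$, Gorenstein duality gives $h_{F^cV}(D-c-1)=h_{F^cV}(d_c-2)=l$, so the Hilbert function of $F^cV$ stays constant on the three consecutive degrees $d_c-2,d_c-1,\dots$ up through (after dualizing) $D-c-1$; concretely, I only need that $h_{F^cV}(d_c-1)=h_{F^cV}(d_c-2)$ forces, via Theorem~\ref{thmGotz} (the Gotzmann persistence statement, with Macaulay expansion having top term $\epsilon_{d_c-2}=0$ since $l\le D-c-2\le 2d_c-4<\binom{d_c}{d_c-2}$ for the relevant range, so $l^{\langle d_c-2\rangle}=l$), that $h_{F^cV}(k+1)=h_{F^cV}(k)^{\langle k\rangle}$ for all $k\ge d_c-2$, and that the associated graded ideal generated by $F^cV_{d_c-2}$ has Hilbert polynomial the constant $l$, i.e.\ cuts out a zero-dimensional scheme of length $l$. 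Call this scheme $\Delta$. Then $F^cV_k = (\text{ideal generated by }F^cV_{d_c-2})_k$ for all $k\ge d_c-2$, and by Gotzmann this equals $I(\Delta)_k$ for all such $k$, in particular for $d_c-2\le k\le D-c-1$. For $k<d_c-2$ the containment $F^cV_k\subseteq I(\Delta)_k$ is automatic once we know $F^cV_{d_c-2}\subseteq I(\Delta)_{d_c-2}$ and $\Delta$ has the expected length, because multiplying a degree-$k$ form of $F^cV$ by $S_{d_c-2-k}$ lands in $F^cV_{d_c-2}=I(\Delta)_{d_c-2}$, and a form vanishing on no point of $\Delta$ when multiplied by all of $S_{d_c-2-k}$ would force that product to fail to vanish at that point (using that $\Delta$ is reduced-or-at-least that $I(\Delta)_{d_c-2}$ is $(d_c-2)$-regular enough); cleaner is to observe $I(\Delta)$ is saturated and $F^cV$ agrees with it in degree $d_c-2$, and $F^cV$ is itself contained in its own saturation, so $F^cV_k\subseteq I(\Delta)_k$ for all $k\le D-c-1$ follows by the standard comparison once $h$ matches in the pinch degree.

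Next I would promote this to the second assertion. Recall from the definition of the filtration that $h_{F^iV}(k)=h_{F^{i+1}V}(k)+h_{P^iV}(k+d_i-d_c)$, and that each $P^iV$ is an ideal with $F^iV/F^{i+1}V$ the corresponding graded module; more importantly, by construction $V_k\subseteq\oplus_{i=1}^c S_{k+d_i-d_c}$ and the top component $F^cV$ controls the last slot. The idea is: an element $(h_1,\dots,h_c)\in V_k$ with $k\le d_c-2$ has last coordinate $h_c$; since multiplying $V_k$ into $V_{d_c-2}$ and projecting onto the last factor lands in $F^cV_{d_c-2}=I(\Delta)_{d_c-2}$, and since $I(\Delta)$ is saturated, we get $h_c\in I(\Delta)_{k+d_c-d_c}=I(\Delta)_k$. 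For the other coordinates $h_i$ with $i<c$, the degree shift is $k+d_i-d_c\le d_c-2+d_i-d_c = d_i-2<d_c-2$, and the key point is that $V$ was chosen (Proposition~\ref{prpW}(3)) so that $V_{d_c}(p)=\C^c$ for every $p$, equivalently the ideal $P^iV$ and the partials $(g_1)_{x_j},\dots,(g_c)_{x_j}$ together with the $g_ie_j$ generate enough; I would argue that $h_i$, multiplied by suitable elements of lower degree, slots into the $i$-th position of an element of $V_{d_c-2}$, and by the analogous projection $\mathrm{pr}_i$ of $F^iV$ (which is $P^iV$ up to shift) one gets $h_i\in P^iV_{k+d_i-d_c}$; then one shows $P^iV_m\subseteq I(\Delta)_m$ for $m\le d_i-2$ by the same Gotzmann/saturation argument applied after noting $P^iV\subseteq F^cV$-behavior is forced by the Gorenstein structure — or, more robustly, by induction on $i$ downward, using the short exact sequence relating $F^iV,F^{i+1}V,P^iV$ together with the numerical equality $l=h_{F^cV}(d_c-2)=h_{F^cV}(d_c-1)$ which, combined with Lemma~\ref{lemHighFc} and Lemma~\ref{lemLowFc}, pins down the Hilbert functions tightly enough that no ``extra'' elements outside $I(\Delta)$ can appear in the relevant degrees.

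The main obstacle I anticipate is precisely this last step: transferring the statement about $F^cV$ (the bottom of the filtration, which is genuinely an ideal and to which Gotzmann applies cleanly) to a statement about all of $V$ and the intermediate $P^iV$. The difficulty is that $V$ itself is only a module over $S$, not an ideal, so ``being cut out by a scheme $\Delta$'' has to be interpreted slotwise via the $\oplus_i I(\Delta)_{k+d_i-d_c}$ formulation, and one must rule out that some $h_i$ in a low-degree element of $V$ escapes $I(\Delta)$ even though its product with $S_{d_c-2-k}$ stays inside. I expect this is handled by a regularity/saturation argument: $I(\Delta)$ is saturated, so $I(\Delta)_m = \{h\in S_m : h\cdot S_{t}\subseteq I(\Delta)_{m+t}\ \forall t\gg 0\}$, and since $F^cV$ (hence, via the filtration, each $P^iV$) agrees with $I(\Delta)$ in degree $d_c-2$ and beyond up to $D-c-1$ — a range of length $D-c-1-(d_c-2)=D-d_c-c+1\ge 2$ by the hypothesis $D\ge d_c+c+1$ — this pinpoints $I(\Delta)$ uniquely and forces the containment in all lower degrees. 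I would also need to double-check the edge cases where $l$ is small (e.g.\ $\Delta$ a single reduced point, or $d_c$ near $3$), invoking Corollary~\ref{corMacLowDeg} to control $h_{F^cV}$ in the low degrees and confirm consistency with $h_{F^cV}(k)\ge k+1$ from Lemma~\ref{lemLowFc}.
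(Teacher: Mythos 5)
Your first step misapplies Gotzmann. By Lemma~\ref{lemLowFc} we have $l=h_{F^cV}(d_c-2)\geq d_c-1>d_c-2$, so the Macaulay expansion of $l$ in base $d_c-2$ has $\epsilon_{d_c-2}\geq 1$ and hence $l^{\langle d_c-2\rangle}\geq l+1$; your claim that $\epsilon_{d_c-2}=0$ (for which the relevant comparison is $l\leq d_c-2$, not $l<\binom{d_c}{d_c-2}$) is false. Consequently the equality $h_{F^cV}(d_c-1)=h_{F^cV}(d_c-2)$ does \emph{not} attain the Macaulay bound, and Gotzmann persistence does not trigger in degree $d_c-2$. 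The paper instead uses the Gorenstein duality you mention to transport the equality to degrees $D-c-2$ and $D-c-1$, where the hypothesis $l\leq D-c-2$ really does give $l^{\langle D-c-2\rangle}=l$, and applies Gotzmann there; this produces $\Delta$ of length $l$ with $F^cV_{D-c-1}=I(\Delta)_{D-c-1}$, after which $F^cV_k=I(\Delta)_k$ for all $k\leq D-c-1$ follows by the saturation argument you sketch. (Your interpolation that the Hilbert function ``stays constant up through $D-c-1$'' is also unjustified --- it equals $l$ at the four degrees $d_c-2$, $d_c-1$, $D-c-2$, $D-c-1$ but could a priori be larger in between --- though it is not needed.)

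The second and more serious gap is the one you flag yourself: you never actually prove $V_k\subset\oplus_i I(\Delta)_{k+d_i-d_c}$ for $k\leq d_c-2$. Chasing the coordinates $h_i$ with $i<c$ through the filtration $P^iV$ does not work, since nothing forces those projections to vanish on $\Delta$ degree by degree. The paper's argument is an evaluation/determinant trick that bypasses the filtration entirely: since $d_c\leq D-c-1$ and $g_1,\dots,g_c\in F^cV\subset I(\Delta)$ in the relevant degrees, $\Delta$ lies on $X_H$, so $\dim V_{d_c-1}(p)=c$ for every $p\in\Delta$. If some $v_1\in V_k$ with $k\leq d_c-2$ had $v_1(p)\neq 0$, one completes it with $v_2,\dots,v_c\in V_{d_c-1}$ to a $c\times c$ matrix whose determinant does not vanish at $p$; that determinant lies in $F^cV$ and has degree $k+D-d_c-c+1\leq D-c-1$, contradicting the fact that $p$ is a base point of $F^cV_{D-c-1}=I(\Delta)_{D-c-1}$. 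Hence every element of $V_k$ vanishes along $\Delta$ for $k\leq d_c-2$, which is the assertion. Without some version of this argument your proof of the second containment is incomplete.
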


\begin{proof}
Note that $d_c-1\leq h_{F^cV}(d_c-2)\leq D-c-2$ implies $D-c-1\geq d_c$.
Using Gorenstein duality for $F^cV$ we obtain that \begin{eqnarray*}h_{F^cV}(D-c-1)&=&h_{F^cV}(d_c-2)=h_{F^cV}(d_c-1)\\&=&h_{F^2V}(D-c-2)\\&=&l \leq D-c-2.\end{eqnarray*} In particular, by Theorem~\ref{thmGotz} we have that there is a zero-dimensional scheme $\Delta$ of length $l$ such that
$F^cV_{D-c-1}=I(\Delta)_{D-c-1}$ holds. By the construction of $F^cV$ we have then $F^cV_{k}=I(\Delta)_{k}$ for $k\leq D-c-1$.

From $d_c\leq D-c-1$ and $g_1,\dots,g_c\in F^cV$ it follows that $\Delta$ is contained in $X_H$. In particular, for any $p\in \Delta$ we have $\dim V_{d_c-1}(p)=c$. 

Let $p\in \Delta$, let $v_1\in V$ be an element of minimal degree $k$ such that $v_1(p)$ is nonzero. Since $\dim V_{d_c-1}(p)=c$ there exist $c-1$ elements $v_2,\dots, v_c\in V_{d_c-1}$ such that the determinant of $v_1,\dots,v_c$ does not vanish at $p$.  This determinant is contained in $F^cV$ and its degree equals
$ k+D-d_c-c+1$. Since $p$ is a base point of $F^cV_{D-c-1}$ it follows that $k\geq d_c-1$. Hence for $k\leq d_c-2$ we have that any element of $V_k$ vanishes along $\Delta$ and hence $V_k \subset \oplus_{i=1}^{c} I(\Delta)_{k+d_i-d_c}.$
\end{proof}

\begin{lemma}\label{lemdone2} Suppose $c=2$ and $d_1=2,d_2\geq 3$ then $X$ has at least 
\[ d_2^2-d_2+1\]
nodes and if equality holds then $X_H$ contains  a line.
\end{lemma}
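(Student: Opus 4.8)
The plan is to specialize the general machinery built up in this section to $c=2$, $d_1=2$, $d_2=d\geq 3$, where $D=d+2$, so $D<2d$ exactly when $d>2$, which holds. Throughout, $V$ is the filtered module attached to $Y$ via Proposition~\ref{prpW}, $F^2V$ is an Artinian Gorenstein ideal of socle degree $D+d-c-3=2d-3$, and $P^1V$ is the ideal with $h_{F^1V}(k)=h_{F^2V}(k)+h_{P^1V}(k+d_1-d_c)=h_{F^2V}(k)+h_{P^1V}(k-d+2)$. Since $F^1V=V$ here (as $c=2$), we have $h_V(k)=h_{F^2V}(k)+h_{P^1V}(k-d+2)$. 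The strategy is: if $X_H$ contains a line we invoke Lemma~\ref{lemLine} to conclude at least $\sum_{i\le j}(d_i-1)(d_j-1)=(d-1)^2+(d-1)=d^2-d$ nodes, and then separately push to $d^2-d+1$; otherwise we run the Macaulay/Gotzmann estimates on $F^2V$ and $P^1V$ to show the node count strictly exceeds $d^2-d$, in which case the line must appear after all. The target $d^2-d+1$ matches $\#X_{\sing}\ge\sum_{k=0}^{2d-3}h_V(k)$ with $h_{V_L}$ contributing exactly $d^2-d$, so we need a surplus of $1$ somewhere.

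First I would handle the high-degree range: by Lemma~\ref{lemHighFc} (Corollary~\ref{corHighDeg}) we have $h_V(k)\geq D+d-c-2-k=2d-3-k$ for $d\leq k\leq 2d-4$, and by Lemma~\ref{lemHighFc}/Lemma~\ref{lemLowFc} applied to $F^2V$ we get $h_{F^2V}(k)\geq k+1$ for $k\leq d-2$ and $h_{F^2V}(k)\geq 2d-3-k$ for $d-1\leq k\leq 2d-3$. Now split on $h_{F^2V}(d-1)$. If $h_{F^2V}(d-1)\geq D-c+1=d+1$, then Lemma~\ref{lemFCBig} applies directly and gives $\sum_k h_V(k)>d^2-d$, hence $\ge d^2-d+1$, and Lemma~\ref{lemFcsmall} (first bullet, once we know $h_{F^2V}(1)=2$, which follows since $h_{F^2V}(1)\ge 2$ and a small argument forces equality unless we are already done) or the final sentence of the statement forces $X_H\supset$ a line. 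If instead $h_{F^2V}(d-1)\leq d$, then Lemma~\ref{lemBnd1} gives $h_{F^2V}(d-2)\geq h_{F^2V}(d-1)-1$; combined with Lemma~\ref{lemBnd2} (valid since $d_c+c+1=d+3\le D=d+2$ fails — so here one must check $D\ge d_c+c+1$, i.e. this lemma is unavailable and one argues directly from Gorenstein duality and Gotzmann as in its proof) we reach a dichotomy: either the base locus of $F^2V_{2d-3}$ contains a line, whence $g_1,g_2\in I(\ell)$ and $X_H\supset\ell$; or $h_{F^2V}$ is strictly decreasing past $d-1$ and Lemma~\ref{lemScheme} produces a zero-dimensional scheme $\Delta$ with $V_k\subset I(\Delta)_{k-1}\oplus I(\Delta)_{k}$ for $k\le d-2$.

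In the scheme case I would compare $\sum_k h_V(k)$ against $d^2-d$ by bounding $h_{P^1V}$ from below. Because $\dim V_{d_c-1}(p)=c$ at every point of $\Delta\subset X_H$ while every element of $V_k$ with $k\le d-2$ vanishes on $\Delta$, the "missing" rank is carried by $F^2V$, and a length count on $\Delta$ together with $l=h_{F^2V}(d-2)=h_{F^2V}(d-1)\le d$ forces $h_{P^1V}$ large enough in low degrees that $\sum_k h_V(k)>d^2-d$; I expect this counting to be the technical heart. In all branches we land on either $\sum_k h_V(k)\ge d^2-d+1$ together with $X_H\supset\ell$, or $\sum_k h_V(k)>d^2-d$, and in the latter we still get $X_H\supset\ell$ from Lemma~\ref{lemFcsmall} applied to one of its three bullets (checking $h_{F^2V}(d-2)$ or $h_{F^2V}(d-3)$ satisfies the stated numerical hypotheses, using $D\le 2d$ so the inequalities $h_{F^2V}(d-2)\le 2d-4$ etc. come for free from $h_{F^2V}(d-2)\le l\le d<2d-4$ once $d\ge 5$, with $d=3,4$ done by hand). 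The main obstacle I anticipate is the bookkeeping in the scheme case: turning the geometric fact "$\Delta\subset X_H$ and the Jacobian rows span $V_{d_c-1}(p)$" into a clean lower bound on $\sum_k h_{P^1V}(k)$ that beats $d^2-d$ by at least one, since the Gorenstein symmetry of $F^2V$ alone only recovers $d^2-d$ exactly.
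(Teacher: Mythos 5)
Your plan has the right ingredients (Macaulay/Gotzmann/Gorenstein bookkeeping on $F^2V$, and the dichotomy ``surplus or line'' via Lemma~\ref{lemFcsmall} and Lemma~\ref{lemLine}), but as written it does not close, for two concrete reasons. First, the arithmetic is off by one in a way that hides where the crucial $+1$ comes from: with $(d_1,d_2)=(2,d)$ one has $\sum_{i\leq j}(d_i-1)(d_j-1)=1+(d-1)+(d-1)^2=d^2-d+1$ (you dropped the term $(d_1-1)^2=1$), so Lemma~\ref{lemLine} already yields $d^2-d+1$ in the line branch and no ``separate push'' is needed; likewise Lemma~\ref{lemHighFc} gives $h_{F^2V}(k)\geq D+d_2-c-2-k=2d-2-k$ (not $2d-3-k$) for $d-1\leq k\leq 2d-3$, and it is exactly this bound which, combined with Gorenstein duality about the socle degree $2d-3$, yields $h_{F^2V}(k)\geq k+1$ for $k\leq d-2$ and hence the baseline $\sum_k h_{F^2V}(k)\geq d^2-d$. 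Second, the case structure you propose is largely vacuous in this setting: Lemma~\ref{lemBnd2} needs $D\geq d_c+c+1$, which fails here (as you note), but Lemma~\ref{lemScheme} also fails to apply, since its hypothesis $h_{F^2V}(d-1)=h_{F^2V}(d-2)\leq D-c-2=d-2$ is impossible --- duality and the bound above force $h_{F^2V}(d-2)=h_{F^2V}(2d-3-(d-2))=h_{F^2V}(d-1)\geq d-1$. So the branch you single out as ``the technical heart'' never occurs, and the actual content of the lemma --- producing a surplus of $1$ over the baseline $d^2-d$ --- is never established.

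The intended argument is much shorter and deliberately avoids the general machinery (this case is split off from Proposition~\ref{prpFCSmall} precisely because that machinery does not apply): since $D-c=d_2$, Lemma~\ref{lemHighFc} and Gorenstein duality give the baseline $d^2-d$ as above; if $h_V(k)>k+1$ for some $1\leq k\leq d-2$ one immediately gets at least $d^2-d+1$ nodes; otherwise $h_{F^2V}(k)=k+1$ for all $k\leq d-2$, in particular $h_{F^2V}(1)=2$, so the first bullet of Lemma~\ref{lemFcsmall} (applicable since $D=d+2\leq 2d$) produces a line on $X_H$ and Lemma~\ref{lemLine} then gives $(d-1)^2+(d-1)+1=d^2-d+1$ nodes. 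A fragment of this dichotomy does appear in your discussion of the $h_{F^2V}(1)=2$ case, but it is not what carries your argument, and the quantitative step it rests on is exactly the one your off-by-one errors obscure.
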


\begin{proof} Since $D-2=d_2$ holds, we have that $F_{d_2}$ is base point free. The socle degree of $F^2V$ equals $D+d_c-c-3=2d_2-3$ and therefore we have $h_{F^2V}(k)\geq 2d_2-2-k$ for $d_2-1\leq k \leq 2d_2-3$. Using Gorenstein duality we get $h_{F^2V}(k)\geq k+1$ for $k\leq d_2-2$. Hence we are done if we can show $h_{V}(k)>k+1$ for some $1\leq k \leq d_2-2$.  Suppose that this is not the case then $h_{F^2V}(k)=k+1$ for $k\leq d_2-2$. By Lemma~\ref{lemFcsmall} we have that $X_H$ contains a line and therefore $X$ has at least $(d_2-1)^2+(d_2-1)+1$ nodes.
\end{proof}

\begin{proposition}\label{prpFCSmall} Suppose that $h_{F^cV}(d_c-1)\leq D-c-2$, $D<2d_c$, $d_c\geq 4$ then $X$ has at least
\[ \sum_{i\leq j} (d_i-1)(d_j-1)\]
nodes. Moreover, if equality holds then $X_H$ contains a line.
\end{proposition}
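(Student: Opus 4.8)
The plan is a case analysis in which each branch either exhibits a line on $X_H$, so that Lemma~\ref{lemLine} applies, or else proves the \emph{strict} inequality $\sum_k h_V(k)>\sum_{i\le j}(d_i-1)(d_j-1)$. First I would dispose of the exceptional multidegree: if $D<d_c+c+1$ then, since every $d_i\ge 2$, necessarily $c=2$ and $d_1=2$, and since $d_c=d_2\ge 4>3$ Lemma~\ref{lemdone2} gives at least $d_2^2-d_2+1=\sum_{i\le j}(d_i-1)(d_j-1)$ nodes, with equality forcing a line. So assume $d_c+c+1\le D\le 2d_c-1$, and take $X$ general in its equisingular deformation space, so that the hypothesis of Lemma~\ref{lemLine} becomes simply ``$X_H$ contains a line''. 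If $X_H$ contains a line, Lemma~\ref{lemLine} already yields the bound and we are done. So from now on assume $X_H$ contains no line; the goal is the strict inequality, which via $\#X_{\sing}\ge\sum_k h_V(k)$ gives both the bound and the ``moreover'' clause (whose contrapositive is exactly: no line implies strict inequality).

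Set $\ell:=h_{F^cV}(d_c-1)$; by hypothesis $\ell\le D-c-2$, and Lemma~\ref{lemLowFc} gives $h_{F^cV}(d_c-2)\ge d_c-1$, hence $\ell\ge d_c-1$. Lemmas~\ref{lemBnd1} and~\ref{lemBnd2} confine $h_{F^cV}(d_c-2)$ to $\{\ell-1,\ell\}$. If $h_{F^cV}(d_c-2)=\ell-1$, then $\ell-1$ lies in $[d_c-1,\,2d_c-6]$ (the upper bound from $\ell\le D-c-2\le 2d_c-c-4$ and $c\ge 2$), so in base $d_c-2$ one has $(\ell-1)^{\langle d_c-2\rangle}=\ell$; since $S_1\cdot F^cV_{d_c-2}\subseteq F^cV_{d_c-1}$ forces $h_J(d_c-1)=\ell$ for $J:=(F^cV_{d_c-2})$, this is the maximal growth, and Theorem~\ref{thmGotz} shows the Hilbert polynomial of $J$ has $t$-degree one, so $V(F^cV_{d_c-2})$ contains a line $\ell'$; the Otwinowska-type argument from the proof of Lemma~\ref{lemFcsmall} then puts $\ell'$ on the smooth surface $X_H$, a contradiction. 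Therefore $h_{F^cV}(d_c-2)=h_{F^cV}(d_c-1)=\ell\le D-c-2$, and Lemma~\ref{lemScheme} produces a zero-dimensional $\Delta\subset X_H$ of length $\ell$ with $F^cV_k=I(\Delta)_k$ for $k\le D-c-1$ and $V_k\subseteq\bigoplus_i I(\Delta)_{k+d_i-d_c}$ for $k\le d_c-2$. Since $I(\Delta)$ is saturated, $h_{S/I(\Delta)}$ is nondecreasing, agrees with $h_{F^cV}$ for $k\le D-c-1$, and (by Theorem~\ref{thmGotz}, invoked wherever it fails to increase strictly) has reached its final value $\ell$ already in degree $d_c-2$.

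It remains to split on $h_{F^cV}(1)$. If $h_{F^cV}(1)=2$, the $c+1$ linear forms in $F^cV_1$ cut out a line $\ell'$ with $I(\ell')\subseteq F^cV$, and the first bullet of Lemma~\ref{lemFcsmall} forces $\ell'\subset X_H$, a contradiction. Hence $h_{F^cV}(1)\ge 3$, so $\Delta$ spans a linear space of dimension at least two; then the first difference of $h_{S/I(\Delta)}$ has value at least $2$ in degree $1$ and stays positive until the length is attained, giving $h_{S/I(\Delta)}(m)\ge\min(\ell,m+2)$ for all $m\ge 1$. The final step is numerical: for $d_c\le k\le D+d_c-c-2$ Corollary~\ref{corHighDeg} gives $h_V(k)\ge D+d_c-c-2-k$; for $k\le d_c-2$ Lemma~\ref{lemScheme} gives $h_V(k)\ge\sum_{i=1}^c h_{S/I(\Delta)}(k+d_i-d_c)$; and $h_V(d_c-1)\ge\ell+\sum_{i=1}^{c-1}h_{P^iV}(d_i-1)$. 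Comparing with the model Hilbert function $h_{V_L}$, whose total equals $\sum_{i\le j}(d_i-1)(d_j-1)$: the degrees $\ge d_c$ cancel exactly, the degrees $\le d_c-2$ produce the surplus $\sum_{i=1}^c\sum_{m=0}^{d_i-2}\bigl(h_{S/I(\Delta)}(m)-(m+1)\bigr)$, which is strictly positive since $h_{S/I(\Delta)}(m)\ge m+2$ on a nonempty range, and — using $\ell\ge d_c-1$, $D<2d_c$, and lower bounds on the $P^iV$ coming from the relations $g_ie_j\in V$ (for $d_i<d_j$) and the Jacobian rows — one checks that this surplus dominates the possible shortfall $D-c-1-h_V(d_c-1)$ at degree $d_c-1$. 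This yields $\sum_k h_V(k)>\sum_{i\le j}(d_i-1)(d_j-1)$, and hence the proposition.

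The step I expect to be the main obstacle is exactly this last comparison. The model has $h_{V_L}(d_c-1)=D-c-1$, strictly larger than the a priori bound $h_V(d_c-1)\ge\ell$ available in the relevant case, so one genuinely has to play off the extra growth of $h_{S/I(\Delta)}$ forced by the non-degeneracy of $\Delta$ against careful lower bounds for the mixed-degree modules $P^iV$, while controlling the degenerate sub-cases (several $d_i$ equal to $d_c$, some $d_i$ small, $\ell$ close to $d_c-1$); this bookkeeping, rather than any single conceptual point, is where the argument becomes technical.
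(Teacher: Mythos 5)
Your proposal is correct in structure and reproduces the paper's architecture almost step for step: the same reduction to a general equisingular deformation whose hyperplane section contains no line, the same disposal of $D<d_c+c+1$ via Lemma~\ref{lemdone2}, the same use of Lemmas~\ref{lemBnd1}, \ref{lemBnd2} and a Gotzmann/Otwinowska argument (the content of Lemma~\ref{lemFcsmall}) to force $h_{F^cV}(d_c-2)=h_{F^cV}(d_c-1)=\ell$, and the same invocation of Lemma~\ref{lemScheme} to reduce everything to a deficit at the single degree $k=d_c-1$. Where you genuinely diverge is the final accounting. The paper first upgrades the low-degree bound to $h_{F^cV}(k)\geq k+2$ via the third bullet of Lemma~\ref{lemFcsmall} and Corollary~\ref{corMacDown}, and then --- crucially for its bookkeeping --- transports this surplus by Gorenstein duality of $S/F^cV$ to the degrees just below the socle degree, obtaining a total surplus of about $2d_c-6$ from the $c$-th factor alone, against a deficit of at most $D-c-d_c$. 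You instead take all the surplus from degrees $k\le d_c-2$, summed over all $c$ factors, using that $\Delta$ spans at least a plane so that $h_{S/I(\Delta)}(m)\ge\min(\ell,m+2)$ for $m\ge 1$; your claim that the degrees $\ge d_c$ ``cancel exactly'' discards the high-degree surplus the paper relies on. Your route does close, and more cleanly than you fear: since $d_i<d_c$ for $i<c$ (forced by $D<2d_c$) and $\ell\ge d_c-1$, each factor $i$ contributes a surplus of at least $1$ in every degree $m$ with $1\le m\le d_i-2$, giving a total surplus of at least $(d_c-3)+\sum_{i<c}(d_i-2)$, while the deficit is at most $(D-c-1)-\ell\le\sum_{i<c}d_i-c$; the difference is at least $d_c-c-1$, which is $\ge 1$ because $d_c>\sum_{i<c}d_i\ge 2(c-1)$ and $d_c\ge 4$. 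In particular the ``careful lower bounds for $P^iV_{d_i-1}$'' you flag as the main obstacle are not needed at all --- the $P^iV$ information you use in degrees $\le d_c-2$ (already encoded in $V_k\subset\oplus_iI(\Delta)_{k+d_i-d_c}$) suffices, and the worst case $h_V(d_c-1)=\ell$ is absorbed. So the only substantive incompleteness is that you assert rather than perform this two-line comparison; once written out, your version is a valid (and arguably slightly leaner) alternative to the paper's duality-based count, at the price of needing the multi-factor surplus, which the paper's proof does not use.
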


\begin{proof} If $X_H$ contains a line and the same holds for any equisingular deformation of $X$ then the result follows from Lemma~\ref{lemLine}. Assume now that there is an  equisingular deformation of $X$ such that a general hyperplane section does not contain a line. 
Since a small equisingular deformation leaves both the number of nodes and the defect invariant, we may replace $X$ by such an equisingular deformation and hence we may assume that $X_H$ does not contain a line.

If $D<d_c+c+1$ then $c=2$ and $d_1=2$. The case $d_2=2$ is covered by Example~\ref{example22}, the case $d_2>2$ is covered by Lemma~\ref{lemdone2}.

If $D\geq d_c+c+1$ then  it follows from Lemma~\ref{lemFcsmall}, \ref{lemBnd1} and~\ref{lemBnd2} that $h_{F^cV}(d_c-2)=h_{F^cV}(d_c-1)$ holds. Lemma~\ref{lemScheme} yields $h_V(k)\geq \sum \max(0,k+1-d_i+d_c)$ for $k\leq d_c-2$. Hence $h_{V}(k)\geq h_{V_L}(k)$ for $k\neq  d_c-1$. It remains to deal with $k=d_c-1$.

Suppose that $h_{F^cV}(d_c-3)<h_{F^cV}(d_c-2)$ holds then from $h_{F^cV}(d_c-2)\leq D-c-2 \leq 2d_c-c-3< 2d_c-4$ it follows that $h_{F^cV}(d_c-3)\geq h_{F^cV}(d_c-2)-1$. If equality holds then from Lemma~\ref{lemFcsmall} it follows that $X_H$ contains a line, which we excluded. Hence $h_{F^cV}(d_c-3) \geq h_{F^cV}(d_c-2)\geq d_c-1$. Corollary~\ref{corMacDown} yields  that $h_{F^cV}(k)\geq k+2$ for $1\leq k \leq d_c-3$.

In particular, it follows that \[\sum_{k=0}^{d_c-3} h_V(k)\geq d_c-3+\sum_{k=0}^{d_c-3}\sum_{i=1}^c  \max(0,k+1-d_i+d_c).\]  Using Gorenstein duality for $F^cV$ and the inequality $h_V(k)\geq h_{F^cV}(k)$ we obtain $h_V(D+d_c+c-3-k)\geq h_{F^cV}(D+d_c+c-3-k) \geq 2+k$ for $1\leq k \leq d_c-4$.
Hence $\sum_{k\neq d_c-1} h_V(k)-h_{V_L}(k) \geq 2d_c-6$.

Since $h_{F^cV}(d_c-1)\geq d_c-1$ it follows that 
\[ \sum h_V(k)-h_{V_L}(k) \geq 3d_c-D+c-6\]
Using that  $D< 2d_c$ , $c\geq 2$ and $d_c\geq 4$ we obtain that  the right hand side is at least $1$ and we are done.
\end{proof}

\begin{lemma}\label{lemFCmed}  Suppose that $X_H$ does not contain a line, that $d_c\geq 4$, $h_{F^cV}(d_c-1)\in \{D-c-1, D-c\}$  and $D<2d_c$ then we have
\[ \sum_{k=0}^{D+d_c-c-3} h_V(k)> \sum_{i\leq j} (d_i-1)(d_j-1).\]
\end{lemma}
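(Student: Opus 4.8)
The plan is to handle the two remaining values $h_{F^cV}(d_c-1)\in\{D-c-1,D-c\}$ by a variant of the argument in Lemma~\ref{lemFCBig}, pushing the Macaulay/Gotzmann analysis one degree further down. First I would apply Corollary~\ref{corMacDown} to $F^cV$ starting from $h_{F^cV}(d_c-1)$: since $D<2d_c$ we have $h_{F^cV}(d_c-1)\leq D-c\leq 2d_c-c-2 \leq 2(d_c-1)$, so the Macaulay expansion of $c':=h_{F^cV}(d_c-1)$ in base $d_c-1$ has $\epsilon_{d_c-1}=1$, and Corollary~\ref{corMacDown} gives $h_{F^cV}(d_c-2)\geq c'_{*(d_c-1)}=c'-1$ (with strict inequality unless the expansion has $\epsilon_1=-1$, i.e. unless $c'\leq 2(d_c-1)$ falls in the ``boundary'' shape). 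The key point is to decide whether $h_{F^cV}(d_c-2)=c'-1$ can actually occur: if it does, then by Theorem~\ref{thmGotz} the ideal generated by $F^cV_{d_c-1}$ has a positive-dimensional component, forcing the base locus of $F^cV_{d_c-1}$ to contain a line $\ell$; since $g_1,\dots,g_c\in F^cV$ and $d_c-1\geq d_c$ is false, I would instead argue via $F^cV_{D-c-1}$ as in Lemma~\ref{lemBnd2} (using Gorenstein duality $h_{F^cV}(D-c-1)=h_{F^cV}(d_c-2)+\text{something}$ and $D-c-1\geq d_c$) to conclude $g_i\in I(\ell)$, hence $X_H$ contains a line, contradicting the hypothesis. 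So in our situation $h_{F^cV}(d_c-2)\geq c'$ when $c'=D-c-1$, and more carefully when $c'=D-c$.

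Next I would propagate these lower bounds downward using Corollary~\ref{corMacDown} repeatedly (as in Corollary~\ref{corMacLowDeg}): from $h_{F^cV}(d_c-2)\geq c'$ with $c'\geq D-c-1\geq d_c-1$ (here I use $D\geq d_c+c+1$, which holds because $D<d_c+c+1$ forces $c=2,d_1=2$, a case already excluded by $d_c\geq 4$ together with Lemma~\ref{lemdone2}) one gets $h_{F^cV}(k)\geq \min(k+(c'-(d_c-2)),2k+1)$ for $k\leq d_c-2$, and since $c'-(d_c-2)\geq D-c-d_c+1=\sum_{i<c}d_i-c+1\geq \sum_{i<c}(d_i-1)+1$, this dominates $\sum_{i=1}^c\max(0,k+1-d_c+d_i)=h_{V_L}(k)$ for every $k\le d_c-2$, with at least one strict inequality (e.g. at $k=0$ or $k=1$). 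Combined with the chain $h_V\geq h_{F^cV}$ in degrees $\leq d_c-2$ and with Lemma~\ref{lemScheme}/Lemma~\ref{lemFcsmall} type bounds in degree $d_c-1$, and with Lemma~\ref{lemHighFc} giving $h_V(k)\geq h_{F^cV}(k)\geq D+d_c-c-2-k$ for $d_c-1\leq k\leq D+d_c-c-3$, I would sum over all $k$ to get $\sum_k h_V(k)>\sum_k h_{V_L}(k)=\sum_{i\leq j}(d_i-1)(d_j-1)$.

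For the value $c'=D-c$ the same scheme applies but the strictness is even easier, since $h_{F^cV}(d_c-2)\geq c'-1=D-c-1\geq d_c-1$ and Corollary~\ref{corMacDown} gives $h_{F^cV}(k)\geq \min(k+D-c-1-(d_c-2),2k+1)$, which again strictly dominates $h_{V_L}$. I should be careful to treat the degree-$(d_c-1)$ term separately: here $h_V(d_c-1)\geq h_{F^cV}(d_c-1)=c'\geq D-c-1$, which already exceeds $h_{V_L}(d_c-1)=D+d_c-c-2-(d_c-1)=D-c-1$ when $c'=D-c$, and equals it when $c'=D-c-1$, so no loss occurs there. The case $X_H$ does not contain a line is exactly what lets us rule out the Gotzmann-degenerate cases that would otherwise obstruct the Macaulay growth estimates.

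The main obstacle I expect is the degree-$(d_c-1)$ bookkeeping: Corollary~\ref{corMacDown} controls $h_{F^cV}$ only in degrees $\le d_c-2$ once we start from $d_c-1$, so to get a clean strict inequality I must either squeeze the extra ``$+1$'' out of the non-boundary shape of the Macaulay expansion at some intermediate degree, or extract it from Lemma~\ref{lemHighFc} in the top range; verifying that one of these genuinely fires for \emph{all} remaining $(d_1,\dots,d_c)$ with $D<2d_c$, $d_c\ge 4$ — and not merely generically — is the delicate point, and it is where the hypotheses $d_c\geq 4$ and $D<2d_c$ are used (paralleling their role in Proposition~\ref{prpFCSmall}).
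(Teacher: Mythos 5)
Your overall strategy (Macaulay estimates on $F^cV$, comparison with $h_{V_L}$, line--exclusion via Gotzmann, Gorenstein duality) is the paper's, but there is a genuine gap in the subcase $h_{F^cV}(d_c-1)=D-c-1$, caused by an off-by-one error. You claim $c'-(d_c-2)\geq \sum_{i<c}(d_i-1)+1$, but for $c'=D-c-1$ one has exactly $c'-(d_c-2)=D-c-d_c+1=\sum_{i<c}(d_i-1)$, so your Macaulay bound reads $h_{F^cV}(k)\geq \min\bigl(k+\sum_{i<c}(d_i-1),\,2k+1\bigr)$, which at $k=d_c-2$ gives only $D-c-1=h_{V_L}(d_c-2)-1$. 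So the bound does \emph{not} dominate $h_{V_L}$ in every degree $\leq d_c-2$: a deficit of $1$ at $k=d_c-2$ is possible (the paper states explicitly that $h_{F^cV}(d_c-2)=h_{V_L}(d_c-2)-1$ can occur), and your claimed strict surplus ``at $k=0$ or $k=1$'' also evaporates. The gap is fatal, not cosmetic: take $c=2$, $d_1=d_c-1$, so $h_{V_L}(k)=2k+1$ for $k\leq d_c-2$; then your lower bounds give exactly $h_{V_L}(k)$ for $k\leq d_c-3$ and $k\geq d_c-1$, and $h_{V_L}(d_c-2)-1$ at $k=d_c-2$, so the summation does not even yield the non-strict inequality. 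The paper closes this hole by producing a surplus of at least $2$ at the \emph{top} degrees: from the line--exclusion it gets $h_{F^cV}(1)>2$ and $h_{F^cV}(2)>3$, and Gorenstein duality transfers these to $h_V(D+d_c-c-4)>h_{V_L}(D+d_c-c-4)=2$ and $h_V(D+d_c-c-5)>h_{V_L}(D+d_c-c-5)=3$, which overcompensates the deficit of $1$ at $d_c-2$. Your proposal contains no analogue of this step; indeed your closing paragraph concedes that you have not located the compensating ``$+1$''.

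A secondary problem: your rederivation of ``maximal growth from $d_c-2$ to $d_c-1$ forces a line on $X_H$'' by transporting the argument of Lemma~\ref{lemBnd2} to degree $D-c-1$ does not work. Under Gorenstein duality, maximal growth $h_{F^cV}(d_c-1)=h_{F^cV}(d_c-2)+1$ becomes a \emph{drop} $h_{F^cV}(D-c-1)=h_{F^cV}(D-c-2)-1$, which is not maximal growth in the upward direction, so Gotzmann gives you nothing at degree $D-c-1$ and you cannot conclude $g_i\in I(\ell)$ there. The line is in the base locus only of the low-degree pieces $F^cV_{d_c-2}$, $F^cV_{d_c-1}$, where the $g_i$ of degree $d_c$ do not live; passing from this to $\ell\subset X_H$ is exactly the hard Otwinowska-style content of Lemma~\ref{lemFcsmall}, which you must cite rather than shortcut.
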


\begin{proof}
Note that $D-c\leq 2d_c-3$ holds. From Corollary~\ref{corMacDown} it follows that $h_{F^cV}(d_c-2)\geq h_{F^cV}(d_c-1)-1$, and if equality holds then we can apply Lemma~\ref{lemFcsmall} to conclude that $X_H$ contains a line. Hence we may assume that $h_{F^cV}(d_c-2)\geq h_{F^cV}(d_c-1)$.

Suppose first $h_{F^cV}(d_c-1)=D-c$. Then from Corollary~\ref{corMacLowDeg} it follows \begin{eqnarray*} h_{F^cV}(k)&\geq &\min (2k+1,k+D-c+2-d_c)\\&\geq &\sum_{i=1}^c \min(k+d_i-d_c+1,0)=h_{V_L}(k).\end{eqnarray*}
From $h_{F^cV}(d_c-1)=D-c>h_{V_L}(d_c-1)$ one obtains that $h_V(k)\geq h_{V_L}(k)$ holds for all $k$, and for one value of $k$ we have a strict inequality.

It remains to consider the case where $h_{F^cV}(d_c-1)=D-c-1$. Then $h_{F^cV}(d_c-3)\geq D-c-2$ holds. Note that we have $h_{V_L}(d_c-3)=D-2c$. Arguing as above we have $h_V(k)\geq h_{F^cV}(k)\geq h_{V_L}(k)$ for $k\leq d_c-3$.
However, $h_{F^cV}(d_c-2)=h_{V_L}(d_c-2)-1$ can occur.  Since $X_H$ does not contain a line and $d_c\geq 4$ it follows from Lemma~\ref{lemFcsmall} that $h_{F^cV}(1)>2$ and that $h_{F^cV}(2)>3$. By Gorenstein duality we have that $h_V(D+d_c-c-4)>h_{V_L}(D+d_c-c-4)=2$ and  $h_V(D+d_c-c-5)>h_{V_L}(D+d_c-c-5)=3$. From this the claim follows.
\end{proof}

\begin{theorem}\label{thmCI} Suppose $D<2d_c$, that $X$ has defect  and that $X$ has no induced defect. Then $X$ has at least 
\[ \sum_{i\leq j} (d_i-1)(d_j-1)\]
nodes, and when equality holds then a general hyperplane section of $X$ contains a line.
\end{theorem}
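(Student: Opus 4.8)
The plan is to collect the partial estimates proved above into a single case analysis; no new idea is needed, and the work lies in arranging the hypotheses of the cited results. First I would normalize. We may assume $d_i\geq 2$ for all $i$, since otherwise $X$ lies in a hyperplane and the statement reduces to a smaller codimension; for $c=2$ the hypothesis $D<2d_c$ then just reads $d_1<d_2$. The case $c=2$, $d_1=2$ is disposed of immediately: here $d_2\geq 3$, and Lemma~\ref{lemdone2} gives at least $d_2^2-d_2+1=\sum_{i\leq j}(d_i-1)(d_j-1)$ nodes, with equality forcing a line in $X_H$, which is exactly the assertion. In every remaining case one checks that $d_c\geq 4$: for $c=2$, $d_1\geq 3$ this is $d_2\geq d_1+1\geq 4$, and for $c\geq 3$ one has $d_c>\sum_{i<c}d_i\geq 2(c-1)\geq 4$.

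Next I would follow the dichotomy used in the proof of Proposition~\ref{prpFCSmall}. If a general hyperplane section of every member of the equisingular deformation space of $X$ contains a line, then $\#X_{\sing}\geq\sum_{i\leq j}(d_i-1)(d_j-1)$ by Lemma~\ref{lemLine}, and in particular $X_H$ contains a line. Otherwise I would replace $X$ by a general equisingular deformation --- which leaves the number of nodes, the defect and the absence of induced defect unchanged --- so as to assume that $X_H$ contains no line; it then suffices to prove the strict bound $\#X_{\sing}>\sum_{i\leq j}(d_i-1)(d_j-1)$. Combined with the first alternative this gives the lower bound, and since the strict bound holds in the second alternative, equality can only occur in the first, whence $X_H$ contains a line.

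The heart of the proof is a three-way split on the integer $m:=h_{F^cV}(d_c-1)$, carried out through the inequality $\#X_{\sing}\geq\sum_{k=0}^{D+d_c-c-3}h_V(k)$. If $m\geq D-c+1$, Lemma~\ref{lemFCBig} yields the strict bound directly. If $m\in\{D-c-1,\,D-c\}$, then $d_c\geq 4$, $D<2d_c$ and ``$X_H$ contains no line'' are all available, so Lemma~\ref{lemFCmed} applies and again gives the strict bound. If $m\leq D-c-2$, then $d_c\geq 4$ and $D<2d_c$ let us invoke Proposition~\ref{prpFCSmall}, which gives $\#X_{\sing}\geq\sum_{i\leq j}(d_i-1)(d_j-1)$ with equality forcing a line in $X_H$; since the latter has been excluded, the bound is strict. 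These three ranges exhaust the possibilities for $m$, so $\#X_{\sing}>\sum_{i\leq j}(d_i-1)(d_j-1)$, which completes the proof.

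I expect the only real difficulty to lie not in any individual estimate but in the coherence of the reduction steps: one must make sure that passing to a general equisingular deformation simultaneously retains ``has defect'', ``has no induced defect'' and ``$X_H$ contains no line'', that the low-degree anomaly $d_c=3$ --- which occurs only for $c=2$, $d_1=2$, and is therefore absorbed into Lemma~\ref{lemdone2} --- has genuinely been removed before Lemma~\ref{lemFCmed} and Proposition~\ref{prpFCSmall} are used, and that the ``equality implies a line'' clause is transported correctly through the dichotomy.
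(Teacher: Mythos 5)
Your proposal is correct and follows essentially the same route as the paper: dispose of $c=2$, $d_1=2$ via Lemma~\ref{lemdone2}, note $d_c\geq 4$ otherwise, handle the line case via Lemma~\ref{lemLine}, and split on $h_{F^cV}(d_c-1)$ into the three ranges covered by Lemma~\ref{lemFCBig}, Lemma~\ref{lemFCmed} and Proposition~\ref{prpFCSmall}. Your only departure is cosmetic: you perform the equisingular-deformation dichotomy (line persists for all deformations vs.\ not) explicitly at the level of the theorem, whereas the paper leaves that reduction inside the proof of Proposition~\ref{prpFCSmall}; this is arguably a cleaner arrangement but not a different argument.
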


\begin{proof}
If $d_1=2$ and $c=2$ then   this follows from Lemma~\ref{lemdone2}. Otherwise we have $d_c\geq 4$.
If $X_H$ contains a line then the theorem follows from Lemma~\ref{lemLine}. If $h_{F^cV}(d_c-1)\geq D-c+1$ then this follows from Lemma~\ref{lemFCBig}.
If $h_{F^cV}(d_c-1)\leq D-c-2$ then the result follows from Proposition~\ref{prpFCSmall}.
In the remaining case the result follows from  Lemma~\ref{lemFCmed}.
\end{proof}

We consider next the case where $c=2$ and $d_1=d_2$.

\begin{lemma}\label{lemP1} Suppose that $c=2$ and $d_1=d_2$  hold then $h_{V}(0)=2$.
\end{lemma}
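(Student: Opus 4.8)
The plan is to prove the equivalent assertion $V_0=\{0\}$; since $V_0\subseteq S_0\oplus S_0=\C^2$, this gives $h_V(0)=\dim\big((S_0\oplus S_0)/V_0\big)=2$.

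First I would unwind the definitions in the present case. Here $c=2$ and $d_1=d_2$, so $D=2d_1$ and $D+d_i-c-3=3d_1-5$ for $i=1,2$ (note $3d_1-5\ge 4$ since the standing assumption of the section forces $d_1=d_2\ge 3$). Thus $V_{D+d_c-c-3}=V_{3d_1-5}$ is a subspace of $S_{3d_1-5}\oplus S_{3d_1-5}=\C^2\otimes_{\C}S_{3d_1-5}$. By the definition of $V_k$ for $k\le D+d_c-c-3$, an element $v=(v_1,v_2)\in S_0\oplus S_0=\C^2$ lies in $V_0$ if and only if $(v_1s,v_2s)\in V_{3d_1-5}$ for every $s\in S_{3d_1-5}$, i.e.\ if and only if the ``line of forms'' $\langle v\rangle_{\C}\otimes_{\C}S_{3d_1-5}:=\{(v_1s,v_2s):s\in S_{3d_1-5}\}$ is contained in $V_{3d_1-5}$.

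Next I would reinterpret condition (2) of Proposition~\ref{prpW}. For $c=2$, replacing the generators $(f_1,f_2)$ of $I(X)_{d_1}$ by $(f_1,f_2)\cdot M$ with $M\in\gl_2(\C)$ replaces the identification $W'_k\subseteq S_k\oplus S_k$ by the corresponding linear action of $M$ on the $\C^2$-factor; the last summand $0\oplus S_{3d_1-5}$ read in the new coordinates is exactly $L\otimes_{\C}S_{3d_1-5}$ for a $1$-dimensional subspace $L\subseteq\C^2$, and as $M$ varies $L$ ranges over every line of $\C^2$. Hence condition (2) says precisely that $L\otimes_{\C}S_{3d_1-5}\not\subseteq V_{3d_1-5}$ for every line $L\subseteq\C^2$.

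Combining the two: if $0\ne v\in V_0$, then $\langle v\rangle_{\C}\otimes_{\C}S_{3d_1-5}\subseteq V_{3d_1-5}$ with $\langle v\rangle_{\C}$ a line, contradicting the reformulation of condition (2). Therefore $V_0=\{0\}$ and $h_V(0)=2$. The only point requiring care is the translation of condition~(2) into ``$V_{3d_1-5}$ contains no subspace of the form (line)$\,\otimes\,S_{3d_1-5}$'', i.e.\ the observation that ranging over all systems of generators of $I(X)_{d_1}$ makes the distinguished coordinate summand sweep out all lines of $\C^2$; everything else is a direct unwinding of the definitions of $V_0$ and $h_V$. (Equivalently, one may argue through $h_V(0)=h_{F^2V}(0)+h_{P^1V}(0)$: the first term is $1$ because $S/F^2V$ is Gorenstein Artinian with positive socle degree, and the second is $1$ because $1\in P^1V$ would again force a (line)$\,\otimes\,S_{3d_1-5}$ into $V_{3d_1-5}$.)
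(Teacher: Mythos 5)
Your proof is correct and is essentially the paper's argument: the paper likewise observes that if $h_V(0)<2$ one may, after a linear change of the generators $(f_1,f_2)$ (equivalently of $y_1,y_2$), assume $(0,1)\in V_0$, which forces $0\oplus S_{3d-5}\subset V_{3d-5}$ and contradicts condition (2) of Proposition~\ref{prpW}. Your write-up just makes the translation of condition (2) into ``no subspace $L\otimes S_{3d-5}$ with $L\subset\C^2$ a line lies in $V_{3d-5}$'' more explicit.
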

\begin{proof}
Suppose $h_{V}(0)=1$. Then after a linear change of variables in $y_1,y_2$ (i.e., by choosing a new basis for $I(X_H)$) we may assume $(0,1)\in V$. This contradicts the fact that $0\oplus S \not \subset V$ from Proposition~\ref{prpW}.
\end{proof}

\begin{lemma} \label{lemgr} Suppose $c=2$, $d:=d_1=d_2$ and $h_{F^2V}(d-1)\geq 2d-1$ then
\[ 
\sum_{k=0}^{3d-5} h_V(k)\geq  3(d-1)^2,\]
and equality is only possible for $d=2$.
\end{lemma}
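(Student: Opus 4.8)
The plan is to show that, given what has already been established about $F^{2}V$, the hypothesis $h_{F^{2}V}(d-1)\ge 2d-1$ forces $d=2$, and then to dispose of that one remaining value by hand.

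First I would record the basic data: with $c=2$ and $d_{1}=d_{2}=d$ one has $D=2d$, so $S/F^{2}V$ is Artinian Gorenstein of socle degree $D+d_{c}-c-3=3d-5$, and $F^{1}V=V$. The filtration identity at $i=1$ (with $d_{1}=d_{c}$, so no degree shift) therefore reads
\[ h_{V}(k)=h_{F^{2}V}(k)+h_{P^{1}V}(k)\ge h_{F^{2}V}(k)\qquad\text{for every }k, \]
so it is enough to bound $\sum_{k=0}^{3d-5}h_{F^{2}V}(k)$ from below.

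Next, for $d\ge 3$ I would derive a contradiction. The proof of Lemma~\ref{lemHighFc} in fact gives the equality $h_{F^{c}V}(k)=D+d_{c}-c-2-k$ for $D-c-1\le k\le D+d_{c}-c-2$, so here $h_{F^{2}V}(2d-3)=d-1$; Gorenstein duality $h_{F^{2}V}(k)=h_{F^{2}V}(3d-5-k)$ then yields $h_{F^{2}V}(d-2)=d-1$. Since $d-2\ge 1$, the Macaulay expansion of $d-1$ in base $d-2$ is the single binomial $\binom{d-1}{d-2}$, whence $(d-1)^{\langle d-2\rangle}=\binom{d}{d-1}=d$. Applying Theorem~\ref{thmMac} to $(F^{2}V)_{d-2}$ then gives $h_{F^{2}V}(d-1)\le d<2d-1$, contradicting the hypothesis. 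Hence for $d\ge 3$ the hypothesis of the lemma is never satisfied, and both the inequality and the clause about equality hold vacuously.

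It remains to treat $d=2$, where $3d-5=1$. By Lemma~\ref{lemP1} we have $h_{V}(0)=2$, and the hypothesis gives $h_{V}(1)\ge h_{F^{2}V}(1)\ge 2d-1=3$; hence $\sum_{k=0}^{3d-5}h_{V}(k)=h_{V}(0)+h_{V}(1)\ge 5>3=3(d-1)^{2}$, so equality does not occur here either. The main obstacle, I expect, is recognising that the hypothesis is actually unattainable for $d\ge 3$; once that is seen the proof collapses to the one Macaulay estimate above. If one prefers an argument that does not invoke this vacuity, one can instead combine the \emph{downward} Macaulay bound $h_{F^{2}V}(k)\ge 2k+1$ for $0\le k\le d-1$ coming from $h_{F^{2}V}(d-1)\ge 2(d-1)+1$ (Corollary~\ref{corMacLowDeg}), its Gorenstein dual $h_{F^{2}V}(k)\ge 2(3d-5-k)+1$ for $2d-4\le k\le 3d-5$, and the bound $h_{F^{2}V}(k)\ge k+3$ on the middle range $d\le k\le 2d-5$ obtained by running Corollary~\ref{corMacLowDeg} downward from $h_{F^{2}V}(2d-4)=h_{F^{2}V}(d-1)\ge 2d-1$; summing these (with a short separate check when $d\in\{3,4\}$, where the ranges overlap or the middle range is empty) gives $\sum_{k=0}^{3d-5}h_{V}(k)\ge\tfrac12(7d^{2}-11d-4)>3(d-1)^{2}$ for all $d\ge 3$, and the $d=2$ case is as above.
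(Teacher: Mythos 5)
Your headline argument --- that the hypothesis $h_{F^2V}(d-1)\ge 2d-1$ is unattainable for $d\ge 3$ --- has a genuine gap. It rests on the equality $h_{F^2V}(2d-3)=d-1$, which you extract from the proof of Lemma~\ref{lemHighFc}; but that lemma asserts only the lower bound $h_{F^cV}(k)\ge D+d_c-c-2-k$, and the ``$=$'' appearing in its proof is an overstatement: the only tool used there is Corollary~\ref{corGreen}, which forces the Hilbert function to drop by \emph{at least} one in each degree, hence can only produce a lower bound, never an upper bound. Indeed the equality is incompatible with the very hypothesis you are assuming: from $h_{F^2V}(d-1)\ge 2d-1$, Corollary~\ref{corMacDown} gives $h_{F^2V}(d-2)\ge 2d-3>d-1$ for $d\ge 3$, while $h_{F^2V}(d-2)=h_{F^2V}(2d-3)$ by Gorenstein duality. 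The correct conclusion is that the equality fails, not that the hypothesis does: the hypothesis describes exactly the regime in which $F^2V_{d-1}$ is small (the nodes impose many independent conditions in degree $d-1$); it is the case treated by Lemma~\ref{lemFCBig} for general $c$, and the case division in the proof of Theorem~\ref{thmCI} would collapse if it were empty. The same issue infects your $d=2$ discussion, where you argue from $h_{F^2V}(1)\ge 3$ even though the socle degree is $3d-5=1$ forces $h_{F^2V}(1)=1$; for $d=2$ the intended content is the unconditional bound $h_V(0)+h_V(1)\ge 2+1=3$, obtained from Lemma~\ref{lemP1} and $h_V(1)\ge h_{F^2V}(1)=1$, with equality genuinely possible.

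Your fallback argument in the final sentence is, on the other hand, essentially the paper's proof for $d\ge 4$: $h_{F^2V}(k)\ge 2k+1$ for $0\le k\le d-1$ by Corollary~\ref{corMacLowDeg}, its Gorenstein dual on $[2d-4,3d-5]$, and $h_{F^2V}(k)\ge k+3$ in between coming from $h_{F^2V}(2d-4)=h_{F^2V}(d-1)\ge 2d-1$; the resulting total $\tfrac12(7d^2-11d-4)$ exceeds $3(d-1)^2$ for all $d\ge 3$, provided you handle the overlap of the two outer ranges at $d=3$ (where the correct count is $1+3+5+3+1=13>12$) and the empty middle range at $d=4$. Promote that computation to the main argument, and add the direct two-line treatment of $d=2$ above, and you have a complete proof along the paper's lines.
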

\begin{proof}
Suppose $d=2$ then $h_V(0)=2$ by Lemma~\ref{lemP1}. From Lemma~\ref{lemHighFc} it follows that $h_{F^2V}(1)=1$ and hence $h_V(0)+h_V(1)\geq 3$.

If $d=3$ then $3d-5=4$. Lemma~\ref{lemHighFc} implies  $h_{F^2V}(4)=1$. Since $h_{F^2V}(2)\geq 5$ by assumption, it follows from Corollary~\ref{corMacDown} that $h_{F^2V}(1)\geq 3$ holds. Using Gorenstein duality we get $h_{F^2V}(3)\geq 3$. Combining yields $\sum_{k=0}^4 h_V(k)\geq 13$.

Suppose now $d\geq 4$ holds. From $h_{F^2V}(d-1)\geq 2d-1$ it follows that $h_{F^2V}(k)\geq 2k+1$ holds for $0\leq k \leq d-1$ by Corollary~\ref{corMacLowDeg}. Using Gorenstein duality for $F^2V$ it follows that $h_{F^2V}(3d-5-k)\geq 2k+1$ for $0\leq k \leq d-1$.
From $h_{F^2V}(2d-4)\geq 2d-1$ one gets $h_{F^2V}(k)\geq \min (k+3,2k+1)$ for $k\leq 2d-4$. This implies that $h_{F^2V}(k)\geq k+3$ for $ (3d-5)/2\leq k \leq 2d-4$.  
An easy calculation yields
\[ 
\sum_{k=0}^{3d-5} h_V(k)> 3(d-1)^2.\]
\end{proof}

\begin{lemma}\label{lemklkl} Suppose that $X_H$ does not contain a line, $c=2$, $d=d_1=d_2\in \{3,4,5\}$ and $h_{F^2V}(d-1)\leq 2d-2$, then
\[ \sum_{k=0}^{3d-5} h_V(k)\geq 3(d-1)^2.\]
\end{lemma}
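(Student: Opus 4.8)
The hypothesis is the small-degree counterpart of Lemma~\ref{lemgr}: we now have $h_{F^2V}(d-1)\leq 2d-2$, and the goal is still the bound $\sum_{k=0}^{3d-5} h_V(k)\geq 3(d-1)^2$. The idea is to run through the same trichotomy on $h_{F^2V}(d-1)$ that was used in the $D<2d_c$ analysis (Lemmas~\ref{lemFCBig}, \ref{lemFCmed}, Proposition~\ref{prpFCSmall}), but now with $c=2$, $D=2d$, and the extra input that $X_H$ contains no line, plus the small value of $d\in\{3,4,5\}$ which makes the finitely many borderline cases checkable by hand. First I would record the standing facts: by Lemma~\ref{lemHighFc}, $h_{F^2V}(k)=3d-3-k$ for $2d-3\leq k\leq 3d-5$ (socle degree $3d-5$), by Gorenstein duality $h_{F^2V}(k)=h_{F^2V}(3d-5-k)$, and $h_{F^2V}(D-c)=h_{F^2V}(2d-2)=d-1$. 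Also $h_V(k)\geq h_{F^2V}(k)$ and $h_V(k)\geq h_{V_L}(k)$ are the two basic pointwise lower bounds, where $h_{V_L}(k)=2\max(0,k+1-d+d_1)=2\max(0,k+1)$ wait—here $d_1=d_2=d$ so $h_{V_L}(k)=2(k+1)$ for $k\leq d-2$ and $h_{V_L}(k)=3d-3-k$ for $d-1\leq k\leq 3d-5$; summing $h_{V_L}$ gives exactly $3(d-1)^2$. So it suffices to show $h_V(k)\geq h_{V_L}(k)$ for all $k$ with strict inequality somewhere, or to find the deficit directly.

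The main case split is on $l:=h_{F^2V}(d-1)$. If $l\in\{D-c-1,D-c\}=\{2d-3,2d-2\}$, apply Lemma~\ref{lemFCmed} (whose hypotheses — $X_H$ has no line, $d_c=d\geq 4$, $h_{F^cV}(d_c-1)\in\{D-c-1,D-c\}$, $D<2d_c$ — hold except that here $D=2d_c$, so I would check that Lemma~\ref{lemFCmed}'s proof only uses $D\leq 2d_c$; in fact it invokes Corollary~\ref{corMacLowDeg} with $D-c\leq 2d_c-3$, which for $D=2d$ reads $2d-2\leq 2d-3$, false, so the genuinely new work is the boundary $D=2d$). If $l\leq 2d-4$, then since $2d-4\geq d-1$ forces $d\geq 3$ and one gets $l\leq D-c-2$, so Proposition~\ref{prpFCSmall}'s argument applies: Lemmas~\ref{lemFcsmall}, \ref{lemBnd1}, \ref{lemBnd2} combine to give $h_{F^2V}(d-2)=h_{F^2V}(d-1)$, then Lemma~\ref{lemScheme} gives $h_V(k)\geq \sum\max(0,k+1-d_i+d)=h_{V_L}(k)$ for $k\leq d-2$, and the remaining slack at $k=d-1$ and the Gorenstein-dual range is squeezed out using that $X_H$ has no line (so $h_{F^2V}(1)>2$, $h_{F^2V}(2)>3$ via Lemma~\ref{lemFcsmall}) — I expect the numerical inequality $3d_c-D+c-6\geq 1$ from Proposition~\ref{prpFCSmall} to degrade to $c-6=-4$ here, so I would instead harvest extra from the dual range: $h_V(3d-5-k)\geq h_{F^2V}(3d-5-k)=h_{F^2V}(k)\geq k+2$ for small $k$, which for $d\in\{3,4,5\}$ gives enough surplus. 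For $d\in\{3,4,5\}$ the few residual configurations can be dispatched by direct computation with Macaulay expansions in base $d-1$, $d-2$, $d-3$, exactly as in Lemma~\ref{lemgr}'s $d=2,3$ bookkeeping.

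Concretely I would organize the write-up as: (i) reduce via a small equisingular deformation to the case $X_H$ line-free (as in Proposition~\ref{prpFCSmall}); (ii) the case $l=2d-2=D-c$, treated as in the first half of Lemma~\ref{lemFCmed} — $h_{F^2V}(k)\geq\min(2k+1,k+D-c+2-d_c)=\min(2k+1,k+d)\geq h_{V_L}(k)$ and $h_{F^2V}(d-1)=2d-2>h_{V_L}(d-1)=2d-3$ gives strictness; wait, $h_{V_L}(d-1)=3d-3-(d-1)=2d-2$, so this case needs $l>2d-2$ for strictness, meaning $l=2d-2$ gives only $\geq$, which is still fine since I only need $\geq 3(d-1)^2$, not strict; (iii) the case $l=2d-3$: use $h_{F^2V}(d-3)\geq h_{F^2V}(2d-2)=d-1$ wait by duality $h_{F^2V}(d-3)=h_{F^2V}(2d-2)=d-1=h_{V_L}(d-3)$, and $h_{F^2V}(d-2)\geq l-1=2d-4=h_{V_L}(d-2)-2$; the two-unit deficit at $k=d-2$ is recovered from $h_V(1),h_V(2)$ strictly exceeding $h_{V_L}(1)=4$, $h_{V_L}(2)=6$ via Lemma~\ref{lemFcsmall}'s contrapositive; (iv) the case $l\leq 2d-4$: run the Proposition~\ref{prpFCSmall} machine and close the deficit using duality in the top range, checking $d=3,4,5$ explicitly.

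**The main obstacle.** The delicate point is that the ambient inequalities in Lemmas~\ref{lemFCmed} and Proposition~\ref{prpFCSmall} were proved under $D<2d_c$, whereas here $D=2d_c$ exactly, so the ``$+1$'' of slack those lemmas exploit has vanished; one must instead extract the needed surplus from the line-free hypothesis on $X_H$ (forcing $h_{F^2V}$ to be strictly larger than the Macaulay-minimal growth in degrees $1$ and $2$, via Lemma~\ref{lemFcsmall}) and from Gorenstein self-duality of $F^2V$ in the top cohomological range. Making this accounting tight — showing the accumulated strict gains across $k=1,2$ and their duals $k=3d-6,3d-7$ always compensate the at-most-two-unit shortfall at $k=d-1,d-2$ — is the crux, and it is exactly why $d$ is restricted to $\{3,4,5\}$: outside this range the surplus from Lemma~\ref{lemgr}'s regime is automatic, and inside it the finitely many patterns are verified by hand.
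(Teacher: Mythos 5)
Your overall plan is the right one and matches the paper's in spirit: both proofs are a finite case check for $d\in\{3,4,5\}$ driven by Macaulay/Gotzmann bounds, Gorenstein duality for $F^2V$ (socle degree $3d-5$), and Lemma~\ref{lemFcsmall} to convert any Macaulay-minimal growth of $h_{F^2V}$ in low degrees into a line on $X_H$. You also correctly identify the real difficulty, namely that $D=2d_c$ sits exactly on the boundary where the slack in Lemmas~\ref{lemFCBig}, \ref{lemFCmed} and Proposition~\ref{prpFCSmall} disappears. (The paper organizes the cases by $d$ rather than by $l=h_{F^2V}(d-1)$, but that is cosmetic.)

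There is, however, a genuine gap in your accounting: you only ever lower-bound $h_V$ by $h_{F^2V}$ (plus $h_{V_L}$ in the Lemma~\ref{lemScheme} regime), and that is not enough to reach $3(d-1)^2$ in several of the cases you must handle. The missing ingredient is the contribution of the other layer $P^1V$ of the filtration, i.e.\ the inequality $h_V(k)=h_{F^2V}(k)+h_{P^1V}(k)$. Concretely: for $d=3$ with $h_{F^2V}(2)=3$ the best you can extract from $F^2V$ alone is $\sum_k h_{F^2V}(k)\geq 1+3+3+3+1=11<12$, and the missing unit is exactly $h_V(0)=2>1=h_{F^2V}(0)$, which is Lemma~\ref{lemP1} (a consequence of Proposition~\ref{prpW}(2), not of Lemma~\ref{lemFcsmall}); the same unit is needed for $d=4$, $h_{F^2V}(3)=5$, where $\sum h_{F^2V}\geq 26<27$. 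In the subcases $d=4$, $h_{F^2V}(3)=h_{F^2V}(2)=4$ and $d=5$ with equality in degree $3$, the paper needs the stronger statement that the base locus $\Delta$ of $F^2V$ in the critical degree forces $V_k(p)=0$ for $p\in\Delta$ and small $k$, whence $h_V(k)\geq 2h_{F^2V}(k)$ there; nothing in your plan produces these contributions. Relatedly, your step (iii) asserts that $h_V(1)>h_{V_L}(1)=2d-4$ and $h_V(2)>h_{V_L}(2)=2d-6$ wait---that $h_V(1)>4$ and $h_V(2)>6$ follow from the contrapositive of Lemma~\ref{lemFcsmall}; but that lemma only yields $h_{F^2V}(1)\geq 3$ and $h_{F^2V}(2)\geq 4$, which are \emph{below} $h_{V_L}(1)=4$ and $h_{V_L}(2)=6$, so the claimed surplus at $k=1,2$ does not exist without invoking $P^1V$. (Minor further slips that do not affect the structure: in the top range $h_{V_L}(k)=D+d_c-c-2-k=3d-4-k$, not $3d-3-k$; and the deficit in Proposition~\ref{prpFCSmall} degrades to $3d_c-D+c-6=d-4$, not $c-6$.) To repair the proof you must add Lemma~\ref{lemP1} and the base-point argument for $P^1V$ to your toolkit before doing the case check.
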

\begin{proof}

For $d=3$ we have that the socle degree of $F^2V$ is $4$. If $h_{F^2V}(1)=2$ then $X_H$ contains a line by Lemma~\ref{lemFcsmall}. Hence we have that $h_{F^2V}(1)
\geq 3$ holds. Using Gorenstein duality we obtain $h_{F^2V}(3)\geq 3$. From Corollary~\ref{corMacDown} it follows that $h_{F^2V}(2)\geq 3$. Using that $h_V(0)=2$ (Lemma~\ref{lemP1}) it follows that $\sum h_V(k)\geq 12$ holds.

For $d=4$ we have that the socle degree of $F^2V$ is $7$ and that $4\leq h_{F^2V}(3)=h_{F^2V}(4)\leq 6$ holds.

If $h_{F^2V}(3)\geq 5$ then one gets $h_{F^2V}(2)\geq 4$ and $h_{F^2V}(1)\geq 3$. Then using Gorenstein duality we get that $\sum h_{F^2V}(k)\geq 26$. Since $h_V(0)=2$ it follows that $\sum h_V(k)\geq 27$.

If $h_{F^2V}(3)=4$ and $h_{F^2V}(2)\geq 4$ holds then $h_{F^2V}(4)=4$ by duality and $h_{F^2V}(5)\leq 4$ by Theorem~\ref{thmMac}. Hence $h_{F^2V}(2)= h_{F^2V}(5)=4$. This means that the base locus of $F^2V_5$ consists of four points and we obtain that $V_k(p)=0$ for $k\leq 2$ and $p$ a base point. In particular $h_{V}(k)\geq 2 h_{F^2V}(k)$ for $k\leq 2$. This implies  $\sum h_V(k)\geq 32$.
It remains to consider the case $h_{F^2V}(3)=4$ and $h_{F^2V}(2)=3$. In this case we have by Lemma~\ref{lemFcsmall} that $X_H$ contains a line and we are done.

Suppose now $d=5$. then the socle degree is 10. Note that $h_{F^2V}(5)\geq 6$. If $h_{F^2V}(4)=5$ then by Theorem~\ref{thmGotz} we know that the base locus of $F^2V_5$ contains a line and therefore that $X_H$ contains a line. Hence $h_{F^2V}(6)=h_{F^2V}(4)\geq 6$. Using Corollary~\ref{corMacDown} it follows that $h_{F^2V}(3)\geq 5$ and $h_{F^2V}(2)\geq 4$ and that $h_{F^2V}(1)\geq 3$. Using Gorenstein duality we get that $\sum h_{F^2V}(k)\geq 44$. If we have equality in degree 3 then we have it also in degree 1 and 2 and the base locus of $F^2V_4$ consists of a line and a point and the base locus of $F^2V_6$ consists of at least 5 collinear points. This implies that $V_k(p)=0$ for $k\leq 2$ and all four base points. In particular we get a contribution of at least 8 from $P^1V$, and hence $
\sum_k h_V(k)$  is at least 52.
If $h_{F^2V}(3)\geq 6$ then $h_{F^2V}(2)\geq 4$. In this case we obtain $\sum h_{F^2V}(k)\geq 48$ and we are done.

Now assume that $h_{F^2V}(4)\geq 6$ then $h_{F^2V}(2)\geq 5$. This implies that the total contribution of $F^2V$ is at least 48. Since there is a contribution from $P^1V$ we are done.
\end{proof}

\begin{lemma}\label{lemklgr} Suppose that $X_H$ does not contain a line and that $d:=d_1=d_2>5$ and $h_{F^2V}(d-1)\leq  2d-2$ hold then
\[ \sum_{k=0}^{3d-5} h_V(k)\geq 3(d-1)^2.\]
\end{lemma}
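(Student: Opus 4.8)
Looking at the structure of the surrounding lemmas, the plan for Lemma~\ref{lemklgr} should parallel Lemma~\ref{lemgr} and Lemma~\ref{lemklkl} but handle the generic-$d$ case under the assumption $h_{F^2V}(d-1)\leq 2d-2$.

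\medskip

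The plan is as follows. First I would record the numerical constraints: the socle degree of $F^2V$ is $3d-5$, so by Lemma~\ref{lemHighFc} we have $h_{F^2V}(k)\geq 3d-4-k$ for $2d-4\leq k\leq 3d-5$, and by Lemma~\ref{lemLowFc} we have $h_{F^2V}(k)\geq k+1$ for $k\leq d-2$. Then I would split on the size of $h_{F^2V}(d-1)$. Since we assume $h_{F^2V}(d-1)\leq 2d-2$, the interesting subcase is when $h_{F^2V}(d-1)$ is close to $2d-3$ (i.e.\ close to its ``expected'' value $2(d-1)-1$); if it is strictly larger in lower degrees the Macaulay estimates of Corollary~\ref{corMacDown} and Corollary~\ref{corMacLowDeg} immediately push $\sum h_V(k)$ above $3(d-1)^2$, just as in Lemma~\ref{lemgr}. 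So the main work is to show: if $h_{F^2V}(d-1)\leq 2d-3$ then either $X_H$ contains a line (contradiction) or $h_V$ picks up extra contributions from $P^1V$.

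\medskip

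The key step is the same mechanism used in Lemma~\ref{lemFcsmall}, Lemma~\ref{lemScheme} and the $d=5$ case of Lemma~\ref{lemklkl}: if $h_{F^2V}(d-1)=h_{F^2V}(d-2)$ (forced by Corollary~\ref{corMacDown} together with the assumption that $X_H$ has no line, via Lemma~\ref{lemFcsmall}), then there is a zero-dimensional scheme $\Delta$ of length $l=h_{F^2V}(d-1)\leq 2d-3$ with $F^2V_k=I(\Delta)_k$ for $k\leq 2d-5$ and $V_k\subset \oplus_i I(\Delta)_{k+d_i-d_c}$ for $k\leq d-2$ (Lemma~\ref{lemScheme}). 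Since $g_1,g_2\in F^2V$ vanish on $\Delta$ and $\Delta\subset X_H$, for each $p\in\Delta$ we have $\dim V_{d-1}(p)=2$, and as in Lemma~\ref{lemScheme} any element of $V_k$ for $k\leq d-2$ vanishes on $\Delta$. This gives $h_V(k)\geq 2h_{F^2V}(k)$ for $k\leq d-2$, i.e.\ the contribution of $P^1V$ in low degrees at least matches that of $F^2V$. Combining $h_V(k)\geq 2h_{F^2V}(k)$ for $k\leq d-2$ with $h_V(k)\geq h_{F^2V}(k)\geq 3d-4-k$ for $2d-4\leq k\leq 3d-5$ and the duality-forced values $h_{F^2V}(k)\geq k+1$ in between, a direct summation (comparing against the formula for $h_{V_L}$, whose total is exactly $3(d-1)^2$) yields $\sum_{k=0}^{3d-5} h_V(k)\geq 3(d-1)^2$, with the extra margin coming from the doubling in low degrees once $d>5$.

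\medskip

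The main obstacle I expect is bookkeeping the case $h_{F^2V}(d-1)=2d-3$ together with all the Macaulay-growth inequalities simultaneously, and in particular checking that when $l=h_{F^2V}(d-1)$ is smaller than $2d-3$ the scheme $\Delta$ really does force $V_k$ to vanish on it in \emph{all} degrees $k\leq d-2$ (not just the top one), so that the $P^1V$-contribution genuinely doubles the $F^2V$-contribution over the whole low-degree range; this is exactly where the hypothesis $d>5$ is used to guarantee the surplus is positive, since for $d\leq 5$ the margin can vanish (which is why those cases were treated separately in Lemma~\ref{lemklkl}). A secondary technical point is to handle the boundary degrees $k=d-1$ and $k=2d-4$ carefully, where $h_{V_L}$ has its ``kink'', using Corollary~\ref{corMacDown} and Gorenstein duality to rule out $h_{F^2V}$ dropping below $h_{V_L}$ without $X_H$ acquiring a line.
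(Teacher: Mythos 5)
Your proposal has the right general shape (compare $h_V$ to $h_{V_L}$ degree by degree, use Gorenstein duality for $F^cV$, Macaulay/Gotzmann growth, and Lemma~\ref{lemFcsmall} to exclude lines), but it has two genuine gaps. First, the reduction to Lemma~\ref{lemScheme} is not forced. Corollary~\ref{corMacDown} plus Lemma~\ref{lemFcsmall} only rule out $h_{F^2V}(d-2)=h_{F^2V}(d-1)-1$ when $h_{F^2V}(d-2)\leq 2d-4$; they do not exclude $h_{F^2V}(d-2)>h_{F^2V}(d-1)$, nor the values $h_{F^2V}(d-1)\in\{2d-3,2d-2\}$, for which Lemma~\ref{lemScheme} (which needs $l\leq D-c-2=2d-4$) does not apply. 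Your claim that $h_{F^2V}(d-1)=2d-2$ is disposed of ``just as in Lemma~\ref{lemgr}'' is also false: that lemma requires $h_{F^2V}(d-1)\geq 2d-1$; with $2d-2$ Corollary~\ref{corMacLowDeg} only gives $h_{F^2V}(k)\geq 2k+1=h_{V_L}(k)-1$ in low degrees, i.e.\ a deficit of up to $d-1$ that must be recovered elsewhere. The paper instead splits on $h_{P^1V}(d-1)$ and on whether $h_{F^2V}(d-2)<h_{F^2V}(d-1)$, treating each branch with its own deficit/surplus accounting.

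Second, and more seriously, your closing summation does not close. The doubling $h_V(k)\geq 2h_{F^2V}(k)\geq 2(k+1)$ for $k\leq d-2$ only \emph{matches} $h_{V_L}(k)=2k+2$; it produces no surplus. Corollary~\ref{corHighDeg} and duality give $h_V(k)\geq h_{V_L}(k)$ for $k\geq d$ and for $k\leq d-2$, so the whole lemma hinges on the single degree $k=d-1$, where $h_{V_L}(d-1)=2d-3$ but $F^2V$ alone only contributes $l\leq 2d-4$. Without a lower bound on $h_{P^1V}(d-1)$ (or a compensating strict surplus somewhere else) one gets $\sum h_V(k)\geq 3(d-1)^2-(2d-3-l-b)$, which can fall short by as much as $d-2$. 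This is exactly the point the paper spends most of its proof on: showing $h_V(d-1)\geq 2d-3$ when $h_{P^1V}(d-1)\geq 1$ (via Gotzmann persistence and duality forcing $h_{F^2V}(d)=h_{F^2V}(d-1)$, hence $h_{P^1V}(d)<h_{P^1V}(d-1)$), and in the remaining cases bounding the total deficit (by $3$, resp.\ $d+6$) and exhibiting a matching surplus in degrees $\geq d$ via duality — which is where $d>5$ is actually used. Your proposal defers this as a ``secondary technical point,'' but it is the core of the argument, and the mechanism you propose (doubling in low degrees) cannot supply the missing margin.
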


\begin{proof} Suppose first that $h_{P^1V}(d-1)\geq 1$. We show first that $h_{V}(d-1)\geq 2d-3$.
If $h_{F^2V}(d)>h_{F^2V}(d-1)$ then by Theorem~\ref{thmGotz} we obtain that the base locus of $F^cV_d$ contains a line and therefore that $X_H$ contains a line. Hence $h_{F^2V}(d)\leq h_{F^2V}(d-1)$ holds. Using Gorenstein duality we get by the same argument  $h_{F^2V}(d)=h_{F^2V}(2d-5)\geq h_{F^2V}(2d-4)= h_{F^2V}(d-1)$ and hence $h_{F^2V}(d)=h_{F^2V}(d-1)$. 
Using Gorenstein duality again we get that $h_{F^2V}(2d-3)=h_{F^2V}(d-2)\geq d-1$ and therefore that $h_{F^2V}(d-1)\geq d-1$. To conclude this case if $h_{P^1V}(d-1)\geq d-1$ then we obtain $h_{V}(d-1)\geq 2d-2$. If $h_{P^1V}(d-1)<d-1$ then we get that $h_{P^1V}(d)<h_{P^1V}(d-1)$ and hence that  $h_V(d-1)>h_V(d)=2d-4$ holds. In both cases we have that $h_V(d-1)\geq 2d-3$.

We consider next degree $d-2$.
If $h_{F^2V}(d-2)=d-1$ and either $h_{F^2V}(d-2)<h_{F^2V}(d-1)$ or $h_{F^2V}(d-3)<h_{F^2V}(d-2)$ holds then we have that $F^2V_{d-1}$ is the degree $d-1$ part of the ideal of a line and hence that $X_H$ contains a line by Lemma~\ref{lemFcsmall}.
Hence we may exclude this and we have in any case that $h_{F^2V}(k)\geq k+2$ for $k\leq d-3$ and $h_{F^2V}(d-2)\geq d-1$.
If $h_{P^1V}(d-1)\geq d-1$ then we have $h_{P^1V}(k)\geq k+1$ for $k\leq d-2$ by Corollary~\ref{corMacLowDeg}, hence $h_V(k)\geq 2k+2$ for $k\leq 2$ and we are done.
We assume now that $h_{P^1V}(d-1)<d-1$.

If $h_{F^2V}(d-2)<h_{F^2V}(d-1)$ then $h_{F^2V}(d-2)\geq d$ and therefore $h_{F^2V}(d-1)\geq d+1$. Set $a=h_{F^2V}(d-1)-d+1$ and $b=h_{P^1V}(d-1)$. Then $2\leq a \leq d-1$; $1\leq b \leq d-1$ and $a+b\geq d-2$.

From Theorem~\ref{thmMac} it follows that $h_{F^2V}(k)\geq \min(k+a,2k+1)$ and $h_{P^1V}(k)\geq \min(k+1,b)$. If $h_V(k)< 2k+2$ then $k+a+b<2k+2$, i.e. $k\in \{d-2,d-3\}$. The total difference $\sum_{k=0}^{d-1}h_{V_L}(k)-h_V(k)$ is at most 3. By duality we have that $\sum_{k=d}^{3d-5} h_V(k)-h_{V_L}(k)$ at least $d-1$ and since  $d> 5$  we are done.

The final case is  $h_{P^1V}(d-1)=0$. In this case $h_{F^2V}(d)=h_{V}(d)\geq 2d-4$ and by Theorem~\ref{thmMac} we have that $h_{F^2V}(d)\leq 2d-1$. Let $a=h_{F^2V}(d)-d\geq d-4$. Then $h_{F^2V}(k)\geq \min (k+a,2k+1)$. The total miss $\sum_{k=0}^{d-1} h_{V_L}(k)-h_{V}(k)$ is at most
$d+6$. If $d\geq 6$ then $a\geq 2$ and by duality we have that $\sum_{k\geq d} h_V(k)-h_{V_L}(k)$ is at least $2d+1$, which is again at least $d+6$. 
\end{proof}

\begin{theorem}\label{thmCIeq} Suppose $X$ is a nodal complete intersection of bidegree $(d,d)$ with defect and without induced defect. Then $X$ has at least $3(d-1)^2$ nodes.
\end{theorem}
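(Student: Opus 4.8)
The plan is to assemble Theorem~\ref{thmCIeq} from the lemmas already developed for the bidegree $(d,d)$ case, splitting on the value of $h_{F^2V}(d-1)$ exactly as Theorem~\ref{thmCI} did for the case $D<2d_c$. First, since we are in the case $c=2$, $d_1=d_2=d$, we have $D=2d$, $D+d_c-c-3=3d-5$, and the target bound $\sum_{i\leq j}(d_i-1)(d_j-1)$ becomes $3(d-1)^2$. The case $d=2$ is disposed of directly by Example~\ref{example22} (a $(2,2)$ complete intersection with defect but without induced defect has at least $3$ nodes $=3(d-1)^2$). The case $d=3,4,5$ with $X_H$ containing a line, or $d>5$ with $X_H$ containing a line (and similarly for all equisingular deformations), is handled by Lemma~\ref{lemLine}. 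So, after replacing $X$ by a general equisingular deformation if necessary — which changes neither the number of nodes nor the defect by Lemma~\ref{lemDefLoc} — we may assume $X_H$ does not contain a line.

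Next I would split on $h_{F^2V}(d-1)$. If $h_{F^2V}(d-1)\geq 2d-1$, then Lemma~\ref{lemgr} gives $\sum_{k=0}^{3d-5}h_V(k)\geq 3(d-1)^2$, with equality only when $d=2$; combined with $\#X_{\sing}\geq\sum_k h_V(k)$ this closes that branch. If $h_{F^2V}(d-1)\leq 2d-2$, then I distinguish $d\in\{3,4,5\}$, covered by Lemma~\ref{lemklkl}, from $d>5$, covered by Lemma~\ref{lemklgr}; both yield $\sum_{k=0}^{3d-5}h_V(k)\geq 3(d-1)^2$ under the assumption that $X_H$ contains no line. In every case the inequality $\#X_{\sing}\geq\sum_{k=0}^{3d-5}h_V(k)$ established just before Corollary~\ref{corHighDeg} (via $\#X_{\sing}\geq h_W(k)\geq\sum_j h_{W'}(j)$ and $h_V\leq h_{W'}$ in the relevant range — more precisely the chain of reductions producing $V$ from $W$) gives the desired lower bound $3(d-1)^2$ on the number of nodes.

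The only genuine gap to fill is checking that the hypotheses of Proposition~\ref{prpW} are met so that the module $V$ exists: this requires $c>2$ or $d_2>2$, which holds here precisely because the $d=2$ subcase was peeled off first. I would also remark that the statement ``$X$ has at least $3(d-1)^2$ nodes'' does not assert a characterization of the equality case (unlike Theorem~\ref{thmCI}), so no extra argument about when equality forces a line is needed — although it follows from the lemmas that equality for $d>2$ forces $X_H$ to contain a line, which is worth recording.

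\begin{proof}
Since $c=2$ and $d_1=d_2=d$ we have $D=2d$, so the socle degree $D+d_c-c-3$ equals $3d-5$ and $\sum_{i\leq j}(d_i-1)(d_j-1)=3(d-1)^2$. If $d=2$, then by Example~\ref{example22} a $(2,2)$ complete intersection with defect but without induced defect has at least $3=3(d-1)^2$ nodes, so we may assume $d\geq 3$; in particular $d_2>2$, so Proposition~\ref{prpW} applies and the module $V$ and its filtration $F^iV$, $P^iV$ are defined as above.

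If some equisingular deformation of $X$ has the property that its intersection with a general hyperplane contains a line, then by Lemma~\ref{lemLine} the number of nodes of $X$ is at least $3(d-1)^2$ and we are done. So assume no such deformation exists; replacing $X$ by a general equisingular deformation, which by Lemma~\ref{lemDefLoc} changes neither the number of nodes nor the defect, we may assume $X_H$ does not contain a line.

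Recall that $\#X_{\sing}\geq\sum_{k=0}^{3d-5}h_V(k)$. We bound the right-hand side according to $h_{F^2V}(d-1)$. If $h_{F^2V}(d-1)\geq 2d-1$, then Lemma~\ref{lemgr} gives $\sum_{k=0}^{3d-5}h_V(k)\geq 3(d-1)^2$. If $h_{F^2V}(d-1)\leq 2d-2$ and $d\in\{3,4,5\}$, then Lemma~\ref{lemklkl} gives $\sum_{k=0}^{3d-5}h_V(k)\geq 3(d-1)^2$. If $h_{F^2V}(d-1)\leq 2d-2$ and $d>5$, then Lemma~\ref{lemklgr} gives $\sum_{k=0}^{3d-5}h_V(k)\geq 3(d-1)^2$. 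In all cases $\#X_{\sing}\geq 3(d-1)^2$, as claimed.
\end{proof}
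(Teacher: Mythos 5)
Your proof is correct and follows essentially the same route as the paper: dispose of $d=2$ first, reduce to the case where $X_H$ contains no line via Lemma~\ref{lemLine} and a general equisingular deformation (Lemma~\ref{lemDefLoc}), then split on $h_{F^2V}(d-1)$ and on $d$ to invoke Lemma~\ref{lemgr}, Lemma~\ref{lemklkl} and Lemma~\ref{lemklgr}, together with $\#X_{\sing}\geq\sum_{k=0}^{3d-5}h_V(k)$. The only (harmless) divergence is at $d=2$, where you cite Example~\ref{example22} --- a route the paper itself endorses at the start of Section~\ref{secCIthm}, and which cleanly sidesteps the hypothesis $d_2>2$ needed for the $V$-machinery --- whereas the paper's displayed proof instead argues via Lemma~\ref{lemP1} that $h_V(0)=2$ and $h_V(1)\geq 1$.
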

\begin{proof}
If $d=2$ then by Lemma~\ref{lemP1} we have that $h_{V}(0)=2$. Since $X$ has defect we have that $h_V(1)\geq 1$ and we are done.

Suppose now that $d>2$ holds.

If $X_H$ contains a line and the same holds for any equisingular deformation then the result follows from Lemma~\ref{lemLine}. If $X_H$ contains a line but this property does not hold for any equisingular deformation then we may replace $X$ by this equisingular deformation and therefore assume that $X_H$ does not contain a line.
Depending on $d$ and $h_{F^cV}(d-1)$ this is covered by one of  Lemma~\ref{lemgr}, Lemma~\ref{lemklkl} or Lemma~\ref{lemklgr}.
\end{proof}

\bibliographystyle{plain}
\bibliography{remke2}

\end{document}